\documentclass[12pt]{amsart}
\usepackage[margin=3.5cm]{geometry}
\usepackage{amscd}
\usepackage{amsfonts,amssymb,amsthm}
\usepackage{graphics}
\usepackage{url}

\newcommand{\Isom}{\mathrm{Isom}\, }

\newcommand{\Fix}{\mathrm{Fix}\, }

\usepackage{amsmath}

\usepackage{amsthm}
\usepackage[dvips]{graphicx}

\usepackage{amscd}
\usepackage{epsfig}
\usepackage{psfrag}
\usepackage{srcltx}

\usepackage{amssymb}
\usepackage{latexsym}

\newtheorem{Theorem}{Theorem}[section]

\newtheorem{Proposition}[Theorem]{Proposition}
\newtheorem{Lemma}[Theorem]{Lemma}

\newtheorem{Remark}[Theorem]{Remark}
\newtheorem{Claim}[Theorem]{Claim}

\theoremstyle{definition}
\newtheorem{definition}[Theorem]{Definition}
\newtheorem{remark}[Theorem]{Remark}

\def\OO{{\mathcal O}}

\def\G{{\mathbb G}}
\def\C{{\mathbb C}}
\def\R{{\mathbb R}}

\def\Z{{\mathbb Z}}
\def\Q{{\mathbb Q}}

\def\OO{{\mathcal O}}

\def\G{{\mathbb G}}
\def\C{{\mathbb C}}
\def\R{{\mathbb R}}

\def\Z{{\mathbb Z}}
\def\Q{{\mathbb Q}}

\def\R{{\mathbb R}}

\def\threesphere{{{\mathbf S}^3}}
\def\twosphere{{{\mathbf S}^2}}

\def\zerosphere{{{\mathbf S}^0}}
\def\branchedcover{{{\tilde{\mathbf S}}^3}}
\def\threeball{{{\mathbf B}^3}}

\def\interior{{\mathrm{int}}}

\def\Fix{{\mathrm{Fix}}}
\def\Diff{{\mathrm{Diff}}}

\begin{document}

\title{Prime amphicheiral knots with free period $2$}

\author{Luisa Paoluzzi}
\address{Aix-Marseille Univ, CNRS, Centrale Marseille, I2M, UMR 7373,
13453 Marseille, France}
\email{luisa.paoluzzi@univ-amu.fr}

\author{Makoto Sakuma}
\address{Department of Mathematics\\
Graduate School of Science\\
Hiroshima University\\
Higashi-Hiroshima, 739-8526, Japan}
\email{sakuma@hiroshima-u.ac.jp}

\date{\today}

\subjclass[2010]{Primary 57M25; Secondary 57M50 \\
\indent {
The first author was supported by JSPS Invitational Fellowship for Research in 
Japan S17112.
The second author was supported by JSPS Grants-in-Aid 15H03620.}}

\maketitle

\begin{abstract}
\vskip 2mm

We construct prime amphicheiral knots that have free period $2$.
This settles an open question raised by the second named author, who proved 
that amphicheiral hyperbolic knots cannot admit free periods and that prime 
amphicheiral knots cannot admit free periods of order $>2$.
\vskip 2mm

\noindent\emph{AMS classification: } Primary 57M25; Secondary 57M50.

\vskip 2mm

\noindent\emph{Keywords:} 
prime knot, amphicheiral knot, free period, symmetry.

\end{abstract}

\section{Introduction}
\label{section:intro}
A knot $K$ in the $3$-sphere is \emph{amphicheiral} if there exists 
an orientation-reversing diffeomorphism $\varphi$ of the $3$-sphere which leaves the knot
invariant. More precisely the knot is said to be \emph{positive-amphicheiral}
(\emph{$+$-amphicheiral} for short) if $\varphi$ preserves a fixed orientation
of $K$ and \emph{negative-amphicheiral} (\emph{$-$-amphicheiral} for short)
otherwise. Of course, a knot can be both positive- and negative-amphicheiral.
This happens if and only if the knot is both amphicheiral and invertible. 
Recall that a knot is \emph{invertible} if there exists 
an orientation-preserving diffeomorphism of the $3$-sphere which leaves the knot 
invariant but reverses its orientation.

A knot $K$ in the $3$-sphere is said to have
\emph{free period $n$} $\ge 2$ if there exists an
orientation-preserving periodic diffeomorphism $f$ of the $3$-sphere of order $n$
which leaves the knot invariant, such that $f$ generates a free $\Z/n\Z$-action on the $3$-sphere.

It was proved by the second named author that amphicheirality and free periodicity are inconsistent 
in the following sense (see \cite{S1, S2}).

\begin{enumerate}
\item 
Any amphicheiral prime knot does not have free period $>2$.
\item
Any amphicheiral hyperbolic knot does not have free period.
\end{enumerate}
On the other hand, it is easy to construct, for any integer $n\ge 2$, a 
composite amphicheiral knot that has free period $n$. 
Thus it was raised in \cite{S2} as an open question whether there is an 
amphicheiral prime knot that has free period $2$.

The main purpose of this paper is to prove the following theorem
which gives an answer to this question.

\begin{Theorem}
\label{thm1}
{\rm (1)} For each $\epsilon\in\{+,-\}$, there are infinitely many prime knots
with free period $2$ that are $\epsilon$-amphicheiral but not
$-\epsilon$-amphicheiral; in particular, they are not invertible.

{\rm (2)} There are infinitely many prime knots with free period $2$ that are
$\epsilon$-amphicheiral for each $\epsilon\in\{+,-\}$; in particular, they are
invertible.
\end{Theorem}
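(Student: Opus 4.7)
The plan is to realize knots with free period $2$ as preimages under the universal double cover $p:\threesphere\to \R P^3$: a knot $K\subset \threesphere$ has free period $2$ if and only if $K=p^{-1}(\bar K)$ for a knot $\bar K\subset \R P^3$ representing the non-trivial element of $\pi_1(\R P^3)=\Z/2\Z$. Under this correspondence, an orientation-reversing diffeomorphism $\bar\varphi$ of $\R P^3$ preserving $\bar K$ lifts to an orientation-reversing diffeomorphism $\varphi$ of $\threesphere$ which commutes with the deck involution and leaves $K$ invariant, exhibiting amphicheirality compatibly with the free period $2$. The task therefore reduces to constructing suitable triples $(\R P^3,\bar K,\bar\varphi)$ and then verifying that the lifted knot $K$ is prime with the prescribed symmetry type.

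Since statement (2) of the excerpt rules out the hyperbolic case, the examples must be satellite knots, so I would build $\bar K$ by a satellite construction inside $\R P^3$. Take a simple ``generator knot'' $J\subset \R P^3$ representing the non-trivial class of $\pi_1$ and admitting an orientation-reversing involution $\bar\varphi$ as a symmetry, and insert into a tubular neighborhood of $J$ a carefully chosen tangle-type pattern $P$ that is invariant under the restriction of $\bar\varphi$. Whether $P$ preserves or reverses the orientation of the core of the solid torus controls whether $\bar\varphi$ acts as a $(+)$- or $(-)$-amphicheiral symmetry of $\bar K$, giving both cases of part (1). Varying a twist parameter in the pattern then yields an infinite family. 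For part (2), I would take $P$ to be additionally invariant under a strong inversion of the solid torus so that $\bar K$ acquires a second involutive symmetry of the opposite character, which lifts to give both $+$- and $-$-amphicheirality simultaneously (forcing invertibility).

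The main obstacle is showing that the lifted knot $K\subset \threesphere$ is prime and that, in the situation of part (1), it genuinely fails to be $(-\epsilon)$-amphicheiral. For primality, one must rule out essential decomposing spheres for $(\threesphere,K)$; by the equivariant sphere theorem such a sphere can be taken invariant under the free involution, so it descends to an essential sphere or projective plane in $(\R P^3,\bar K)$, and the pattern $P$ together with the companion $J$ must be chosen so that the JSJ decomposition of the exterior of $\bar K$ in $\R P^3$ admits no such surface, for instance by taking $P$ to be a hyperbolic tangle whose insertion produces a JSJ-irreducible exterior. Distinguishing $\epsilon$-amphicheirality from $(-\epsilon)$-amphicheirality and from invertibility then requires analyzing the symmetry group of the JSJ pieces of $K$: any further symmetry would have to permute the characteristic tori, and the rigidity of the hyperbolic pieces, together with a careful choice of the pattern, allows one to exclude the unwanted symmetries, supplemented where necessary by classical knot invariants such as the Alexander polynomial or signatures.
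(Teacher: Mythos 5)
Your general framework is sound and is in fact close to the paper's: a knot with free period $2$ is exactly the connected preimage of a knot in $\R P^3$ carrying the nontrivial class of $\pi_1$, and the paper's group $\G=\langle\gamma_1,\gamma_2\rangle$ is precisely a choice of two orientation-reversing involutions descending to $S^3/f\cong\R P^3$. You also correctly observe that the examples are forced to be satellites. But as written the proposal is a programme, not a proof: every point where the actual mathematical difficulty lives is deferred to a ``carefully chosen'' pattern. The paper's entire second half (Sections 7--9) consists of exhibiting three explicit links $L_2,L_3,L_6$ and proving by hand (via tangle decompositions, incompressibility and annulus arguments, branched covers, and the orbifold theorem) that they are hyperbolic; nothing in your sketch produces even one concrete example, and without one there is no theorem. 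Moreover, the compatibility conditions you need when gluing the pattern to the companion --- namely that each companion knot sitting over a component with stabiliser $\gamma_j$ must itself be positive- or negative-amphicheiral according to how $\gamma_j$ acts on the peripheral homology, and that over a $\G$-invariant component one needs an amphicheiral knot with free period $2$ --- are exactly the content of Cases 1--4 in Section 2 of the paper, and your proposal does not identify them.

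The second, more serious gap is in ruling out $(-\epsilon)$-amphicheirality in part (1). Classical invariants such as the Alexander polynomial or signatures cannot do this job: they may detect chirality, but they do not distinguish positive- from negative-amphicheirality. The paper's argument is structural and unavoidable in some form: it first proves a converse (Theorem~\ref{prop:structure}) showing that \emph{any} amphicheiral structure on a prime knot with free period $2$ is realised by an isometry of the hyperbolic root $E_0=E(K_0\cup\OO_\mu)$ forming a Klein four-group with $f$, conjugate to $\G$; it then shows (Theorem~\ref{thm2}) that positive-amphicheirality forces $K_0\subset\Fix(\gamma_1)$ (hence $K_0$ trivial), while negative-amphicheirality forces $K_0\supset\Fix(\gamma_2)$. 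The contradictions in Claims~\ref{c:neg} and \ref{c:pos} (e.g.\ $K_0$ composite cannot lie on a $2$-sphere of fixed points; a second $\G'$-invariant component would force a companion to be negative-amphicheiral) are what actually exclude the unwanted symmetry. Your appeal to ``rigidity of the hyperbolic pieces'' gestures in this direction but does not supply the needed classification of where a second Klein four-group action could place $K_0$, nor the observation that the homological action on $H_1(K_0)$ ties the sign of amphicheirality to the position of $K_0$ relative to the fixed-point sets. Until those two items --- explicit hyperbolic roots and the structural exclusion argument --- are supplied, the proof is incomplete.
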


For the proof of the theorem,
we introduce a specific subgroup 
$
\G=\langle \gamma_1, \gamma_2\rangle
$
of the orthogonal group $\mathrm{O}(4) \cong\mathrm{Isom}(\threesphere)$,
generated by two commuting orientation-reversing involutions $\gamma_1$ and 
$\gamma_2$, where $\gamma_1$ is a reflection in a $2$-sphere, 
$\gamma_2$ is a reflection in a $0$-sphere, 
and $f:=\gamma_1\gamma_2$ is a free involution.
Consider a $\G$-invariant hyperbolic link 
$L=K_0\cup \OO_{\mu}$ in $\threesphere$, consisting of a specific 
$\G$-invariant component $K_0$ and a $\mu$-component trivial link $\OO_{\mu}$. 
Then we prove that given such a link $L$,
we can construct a prime amphicheiral knot $K$ with free period $2$,
such that the exterior $E(L)$ of $L$ is the \emph{root} $E_0$ of the JSJ 
decomposition of the exterior $E(K)$, 
i.e. the geometric piece containing the boundary of $E(K)$,
where the boundary torus of the tubular neighbourhood of $K_0$
corresponds to $\partial E(K)$.
In fact, we show that each 
$\gamma_i$ 
(to be precise, the restriction of $\gamma_i$ to $E(L)$)
extends to an orientation-reversing diffeomorphism of $\threesphere$ preserving 
the prime knot $K$, and 
$f$ extends to a free involution on $\threesphere$ preserving $K$.
It should be noted that $\G$ does not necessarily 
extend to a group action on $(\threesphere, K)$.
In fact, the extension of $\gamma_1$ generically has infinite order in the symmetry group $\pi_0\Diff(\threesphere,K)$
(see Remark~\ref{rem:non-periodic}).

The proof of Theorem~\ref{thm1} is then reduced to producing examples of
links $L$ fulfilling the above properties. Of course, while it is not
difficult to construct links admitting prescribed symmetries, ensuring that
they are hyperbolic can be much more delicate. We will construct three
links that will provide examples of knots having different properties. We will
give theoretic proofs of the fact that they are hyperbolic, although this can
also be checked using the computer program SnapPea \cite{Weeks}, SnapPy \cite{CDG}, or 
the computer verified program 
HIKMOT \cite{HIKMOT}.

Moreover, we show that any amphicheiral prime knot with free period $2$ is constructed in this way
(Theorem~\ref{prop:structure}).
In other words, if $K$ is an amphicheiral prime knot with free period $2$,
then the root $E_0$ of the JSJ decomposition of $E(K)$ is identified with $E(L)$
for some $\G$-invariant link $L$ with the above property.
Furthermore, we prove the following theorem 
which provides some insight on the root $E_0$
with respect to the $\G$-action.

\begin{Theorem}
\label{thm2}
Let $K$ be a prime amphicheiral knot with free period $2$ and let
$E_0=E(L)=E(K_0\cup\OO_{\mu})$ be its root. Then after an isotopy, $L$ is 
invariant by the action of $\G$ on $\threesphere$, and the following hold.

{\rm (1)} $L$ contains

\begin{itemize}
\item at most two components whose stabiliser is $\G$, one of which must be
$K_0$;
\item at least one pair of components of $\OO_{\mu}$, 
such that each of the components intersects the $2$-sphere $\Fix(\gamma_1)$ 
transversely in two points, 
the stabiliser of each of the components is generated by $\gamma_1$,
and 
$f$ interchanges the two components;
\item no component with stabiliser generated by $f$.
\end{itemize}

{\rm (2)} Assume that $K$ is positive-amphicheiral. Then $K_0$ is contained in
$\Fix(\gamma_1)$.

{\rm (3)} Assume that $K$ is negative-amphicheiral. Then $K_0$ must contain
$\Fix(\gamma_2)$ and intersect transversally $\Fix(\gamma_1)$ in two points.
\end{Theorem}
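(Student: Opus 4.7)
The plan is to invoke Theorem~\ref{prop:structure}, which already identifies $E_0$ with the exterior $E(L)$ of a $\G$-invariant link $L = K_0 \cup \OO_{\mu}$ for the standard $\G$-action on $\threesphere$, and then to read off the orbit information from the geometry of $L$ relative to $\Fix(\gamma_1) = \twosphere$ and $\Fix(\gamma_2) = \zerosphere$.

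First, $\Stab(K_0) = \G$ because the torus $\partial N(K_0)$ is identified with $\partial E(K)$, which every symmetry of $(\threesphere,K)$ must preserve; in particular it is preserved by the extensions of $\gamma_1, \gamma_2, f$. The bound ``at most two components with stabiliser $\G$'' then reduces to classifying $\G$-invariant simple closed curves in $\threesphere$: a case analysis of $\gamma_i|_C$, subject to the freeness of $f|_C = (\gamma_1\gamma_2)|_C$, shows that any such $C$ must be either a great circle in $\Fix(\gamma_1)$ (with $\gamma_2$ acting antipodally) or a circle through $\Fix(\gamma_2)$ meeting $\Fix(\gamma_1)$ transversely in two points; any two curves of the same type necessarily intersect.

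For (2) and (3), I would compare induced actions on the cusp torus $\partial N(K_0)$. A $+$-amphicheirality acts on $H_1(\partial E(K))$ in the (longitude, meridian) basis as $\mathrm{diag}(1,-1)$ and a $-$-amphicheirality as $\mathrm{diag}(-1,1)$; both are orientation-reversing on the torus, with the longitude orientation preserved or reversed according to the action on $K$. A local tangent-space computation then gives the $H_1$-action of $\gamma_i$ on $\partial N(K_0)$ as a function of the position of $K_0$ relative to $\Fix(\gamma_i)$: for $\gamma_1$, one obtains $\mathrm{diag}(1,-1)$ (with two fixed longitudes) when $K_0 \subset \twosphere$ or $\mathrm{diag}(-1,1)$ (with two fixed meridians) when $K_0$ crosses $\twosphere$ transversely in two points, the possibility $K_0 \cap \twosphere = \emptyset$ being excluded since $\gamma_1$ swaps the two balls of $\threesphere \setminus \twosphere$; for $\gamma_2$, one obtains $\mathrm{diag}(1,-1)$ if $K_0 \cap \zerosphere = \emptyset$ and $\mathrm{diag}(-1,1)$ if $K_0 \supset \zerosphere$, in both cases with $\gamma_2$ acting freely on $\partial N(K_0)$. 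Matching these to the required amphicheirality action forces $K_0 \subset \Fix(\gamma_1)$ in the positive case, and $K_0 \supset \Fix(\gamma_2)$ with $K_0 \pitchfork \Fix(\gamma_1)$ in two points in the negative case.

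The two remaining clauses of (1) are the core of the theorem and where I expect the main obstacle. To exclude stabiliser $\langle f \rangle$: such a $C$ would form an orbit $\{C, C'=\gamma_1(C)=\gamma_2(C)\}$ with $f$ acting freely on each component and $\gamma_1, \gamma_2$ swapping them, and the disjointness of $C, C'$ as components of a trivial link would force $C \cap (\Fix(\gamma_1) \cup \Fix(\gamma_2)) = \emptyset$ (any intersection point would be forced to lie in $C'$ as well); I expect the contradiction to come from descending to $\threesphere / \langle f\rangle \cong \mathbb{RP}^3$ and comparing with how the Dehn fillings that reconstruct the single-component knot $K$ from $L$ interact with the induced involution coming from $\gamma_1$. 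For the existence of at least one $\langle\gamma_1\rangle$-pair transverse to $\Fix(\gamma_1)$: the surface $\Fix(\gamma_1) \cap E(L)$ is, by Mostow rigidity, a properly embedded totally geodesic planar surface in the hyperbolic manifold $E(L)$, with boundary circles on the tori $\partial N(L_i)$ for components of $L$ meeting or contained in $\twosphere$; a parity and Euler-characteristic count on this surface, together with the requirement that $E(L)$ be genuinely hyperbolic, should force components beyond the at most two $\G$-invariant ones, contributing $\langle\gamma_1\rangle$-invariant pairs that cross $\twosphere$ transversely in the way described. This last combinatorial-geometric step is where I anticipate the main technical work.
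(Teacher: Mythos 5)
Your overall strategy coincides with the paper's: invoke Theorem~\ref{prop:structure} to put $L$ in $\G$-invariant position, classify the $\G$-invariant components by their position relative to $\Fix(\gamma_1)$ and $\Fix(\gamma_2)$, and read off (2) and (3) from the induced action on first homology (the paper works with $H_1(K_0;\Z)$ directly rather than the peripheral torus, using that the free involution $f$ acts trivially on $H_1(K_0;\Z)$ so that $\gamma_1$ and $\gamma_2$ act identically; your peripheral version is equivalent). However, two of the clauses of (1) are left with genuine gaps. First, the exclusion of stabiliser $\langle f\rangle$: you correctly derive that such a component $C$ would have to be disjoint from $\Fix(\gamma_1)$, but then propose descending to $\threesphere/\langle f\rangle$ and analysing Dehn fillings. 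This detour is unnecessary and, as sketched, not a proof. You are one line away from the contradiction you already set up: a connected $C$ disjoint from the $2$-sphere $\Fix(\gamma_1)$ lies entirely in one of the two complementary balls, and $f=\gamma_1\gamma_2$ exchanges those balls, so $f(C)\cap C=\emptyset$, contradicting $f(C)=C$. (This is exactly Lemma~\ref{lem:no-f-stab} of the paper, which in fact shows more: any stabiliser containing $f$ must contain $\gamma_1$ and hence be all of $\G$.)

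Second, the existence of at least one $\langle\gamma_1\rangle$-pair is only gestured at. You name the correct key fact -- that $\Fix(\gamma_1)\setminus L$ consists of totally geodesic pieces in the hyperbolic structure on $\threesphere\setminus L$ -- but defer the count, and the count is where the claim actually gets proved. The argument is short: total geodesy forces every component of $\Fix(\gamma_1)\setminus L$ to have negative Euler characteristic, so none is a sphere, disc, or annulus. If no component of $L$ lies in $\Fix(\gamma_1)$, the punctured sphere $\Fix(\gamma_1)\setminus L$ needs at least three punctures, hence at least four since each transversely-meeting component contributes exactly two; if some component lies in $\Fix(\gamma_1)$, the two outermost complementary regions are punctured discs each needing at least two punctures. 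Either way at least two components of $L$ meet $\Fix(\gamma_1)$ transversely; by your own classification at most one of these is $\G$-invariant, so some transverse component $C$ has stabiliser exactly $\langle\gamma_1\rangle$ (it contains $\gamma_1$ because $C$ meets $\Fix(\gamma_1)$, and is not all of $\G$), and $C\ne K_0$ since $K_0$ is $\G$-invariant; the pair $\{C,f(C)\}$ is the required pair of components of $\OO_\mu$. Without this count the second bullet of (1) -- and hence also the deduction $\mu_\pm\ge 2$ used later -- is not established.
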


\begin{Remark}
\label{rem:invertible}
{\rm
In the above theorem, if $K$ is both positive- and negative-amphi\-chei\-ral,
then $L$ admits two different 
positions with respect to the action of $\G$.
In other words,
there are two subgroups $\G_+$ and $\G_-$ in $\Diff(S^3,L)$
such that (i) both $\G_+$ and $\G_-$ are conjugate to $\G$ in $\Diff(S^3)$ and 
(ii) the groups $\G_+$ and $\G_-$ satisfy the conditions (2) and (3), respectively.
In fact, $(S^3,L)$ admits an action of $(\Z/2\Z)^3$ such that
$\G_+$ and $\G_-$ 
correspond to $(\Z/2\Z)^2\oplus 0$ and 
$0\oplus (\Z/2\Z)^2$, respectively,
where $0\oplus (\Z/2\Z)\oplus 0$ is generated by $f$.
}
\end{Remark}

For a prime knot $K$, let $\mu(K)$ be the number of boundary components of
the root $E_0$ of the JSJ decomposition of $E(K)$
that correspond to the tori of the JSJ decomposition,
i.e., $\mu(K)+1$ is equal to the number of boundary components of $E_0$. 
Define now $\mu_{+}$ (respectively $\mu_{-}$) to be the minimum of $\mu(K)$
over all prime knots $K$ with free period $2$ that are positive- (respectively
negative-) amphicheiral. 
The main result of \cite{S2} says that $\mu_{\pm}>0$.
The following theorem determines both $\mu_+$ and $\mu_-$.

\begin{Theorem}
\label{thm:minimal}
The following hold:

{\rm (1)} $\mu_{-}=2$.
Moreover, if $K$ realises $\mu_{-}$, i.e., 
if $K$ is a prime negative-amphicheiral knot with free period $2$  
such that $E_0=E(K_0\cup\OO_2)$, then $K_0$
contains $\Fix(\gamma_2)$ and the stabiliser of each component of $\OO_2$ is
generated by $\gamma_1$.

{\rm (2)} $\mu_{+}=3$.
Moreover, if $K$ realises $\mu_{+}$, i.e.,
if $K$ is a prime positive-amphicheiral knot with free period $2$  
such that $E_0=E(K_0\cup\OO_3)$, then $K_0$
is a (necessarily trivial) knot contained in $\Fix(\gamma_1)$, one of the three
components of $\OO_3$ is stabilised by $\G$ and thus contains $\Fix(\gamma_2)$,
while the stabiliser of the other two components is generated by $\gamma_1$.
\end{Theorem}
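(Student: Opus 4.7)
The plan is to combine the structural constraints of Theorem~\ref{thm2} with the construction of amphicheiral prime knots with free period $2$ from symmetric links, which is implicit in the proof of Theorem~\ref{thm1}. Theorem~\ref{thm2}(1) always provides a pair of components of $\OO_\mu$ stabilised by $\langle\gamma_1\rangle$ and swapped by $f$, so $\mu\ge 2$ in every case. For the negative-amphicheiral case this already yields $\mu_-\ge 2$; a matching $\mu=2$ example is produced by exhibiting a $\G$-invariant hyperbolic link $L=K_0\cup O_1\cup O_2$ in which $K_0$ contains $\Fix(\gamma_2)$ and meets $\Fix(\gamma_1)$ transversally in two points, and feeding it through the construction for $\epsilon=-$. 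The moreover part of~(1) is then immediate: the two components of $\OO_2$ must be the obligatory $\langle\gamma_1\rangle$-pair, and Theorem~\ref{thm2}(3) places $K_0$.

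The heart of the proof is the lower bound $\mu_+\ge 3$. Suppose by contradiction that some positive-amphicheiral prime knot $K$ with free period $2$ has $\mu(K)=2$. By Theorem~\ref{thm2}(2), $K_0$ is a trivial knot lying in $S:=\Fix(\gamma_1)$ and separates $S$ into two disks $D_+,D_-$ swapped by the antipodal action of $\gamma_2|_S$; the components $O_1,O_2$ of $\OO_2$ are swapped by $f$ and each meets $S$ transversally in two points. If some disk $D_\pm$ meets no component of $\OO_2$, a slight push-off into the adjacent ball of $S^3\setminus S$ yields a compressing disk for a longitude of $K_0$ in $E(L)$, contradicting boundary-incompressibility of the hyperbolic manifold $E(L)$. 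Otherwise, using that $\gamma_1$ fixes $S$ pointwise and swaps the two arc-components of each $O_i\setminus S$ (one in each ball of $S^3\setminus S$), together with $\gamma_2$ interchanging $O_1$ and $O_2$, one assembles from these arcs and suitable strips in $D_\pm$ a $\G$-invariant essential annulus joining $\partial N(O_1)$ to $\partial N(O_2)$, again contradicting hyperbolicity.

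The equality $\mu_+=3$ is achieved by exhibiting a $\G$-invariant hyperbolic link $L=K_0\cup O\cup O_1\cup O_2$ with $K_0\subset\Fix(\gamma_1)$ trivial, $O$ a $\G$-invariant component passing through $\Fix(\gamma_2)$, and $(O_1,O_2)$ the obligatory pair, and feeding it through the construction for $\epsilon=+$. For the moreover part of~(2), the orbit decomposition of $\OO_3$ under $\G$ must consist of one orbit of size one (a $\G$-stabilised component $O$) and one of size two (forced by Theorem~\ref{thm2}(1) to be the $\langle\gamma_1\rangle$-pair). Examining the commuting involutions $\gamma_1|_O$ and $\gamma_2|_O$ on the circle $O$ under the constraint that $f|_O$ be free forces both restrictions to be reflections, and the two fixed points of $\gamma_2|_O$ must coincide with $\Fix(\gamma_2)$, so $\Fix(\gamma_2)\subset O$. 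The main obstacle is the essential-annulus construction in the lower bound argument: the configurations of the four intersection points of $O_1\cup O_2$ with $S$ must be handled case by case, and the resulting $\G$-invariant surface verified to be incompressible and non-boundary-parallel in $E(L)$.
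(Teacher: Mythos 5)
Your overall architecture matches the paper's: the lower bound $\mu_{\pm}\ge 2$ comes from Theorem~\ref{thm2}(1) (Claim~\ref{c:atleast2}), the upper bounds come from the explicit links $L_2$ and $L_3$ fed through the construction of Section~\ref{section:knot}, and the ``moreover'' clauses follow from Claims~\ref{c:inv-comp}, \ref{c:atleast2} and \ref{c:K0}. The problem is the one step you yourself flag as ``the main obstacle'': the lower bound $\mu_+\ge 3$, i.e.\ ruling out a positive-amphicheiral example with $E_0=E(K_0\cup\OO_2)$ and $K_0\subset S=\Fix(\gamma_1)$. Your plan is to assemble, from the arcs of $O_1\cup O_2$ and strips in the two disks of $S\setminus K_0$, a $\G$-invariant essential annulus joining $\partial N(O_1)$ to $\partial N(O_2)$. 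This is not justified and cannot work as stated: whether $O_1$ and $O_2$ cobound an essential annulus in $E(L)$ is not determined by the distribution of the four points of $(O_1\cup O_2)\cap S$ among the two disks, and for a ``generic'' entangled configuration no such annulus exists. The relevant datum is not the puncture distribution but the isotopy class of $K_0$ in the four-times punctured sphere $S\setminus\OO_2$, which your case analysis never touches. (Also note that your first sub-case, where one disk misses $\OO_2$, is vacuous: $\gamma_2$ swaps the two disks and permutes the four intersection points, so each disk contains exactly two of them.)

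The paper closes this gap quite differently (Lemma~\ref{p:specialcase}): since each $O_i$ meets $S$ transversally in two points, $S$ is a $2$-bridge sphere for the trivial link $\OO_2$, hence standard by the uniqueness of $2$-bridge spheres; $K_0$ is then an essential simple loop on the $4$-punctured sphere $S\setminus\OO_2$, classified up to isotopy by a slope $s\in\Q\cup\{1/0\}$; and $\gamma_2$-equivariance forces $s=-s$, so $s\in\{0,1/0\}$. Both values give a non-hyperbolic link ($K_0\cup\OO_2$ is a connected sum of Hopf links or the trivial link), contradicting admissibility. To repair your argument you would need to replace the annulus construction by something that controls the isotopy class of $K_0$ rel the punctures --- effectively reproving this slope computation --- so as written the proposal does not establish $\mu_+=3$.
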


We remark that our explicit constructions provide in particular negative-
(respectively positive-) amphicheiral knots $K$ with free period $2$ such that
$\mu(K)=2$ (respectively $\mu(K)=3$).
We also describe the structure of the subgroup $\Isom^*(E_0)$ of $\Isom(E_0)$
consisting of those elements which extend to a diffeomorphism of $(\threesphere,K)$
(Proposition \ref{c:isom}).

\bigskip

The paper is organised as follows. In Section~\ref{section:knot}, we show how
one can construct prime amphicheiral knots with free period $2$ from the
exterior of a hyperbolic link with specified properties. In
Section~\ref{section:roots}, we provide examples of such links. 
In Section~\ref{section:uniqueness}, we show that the requirement on the links 
are not only sufficient but also necessary 
(Theorem \ref{prop:structure}).
This is used in 
Section~\ref{section:thm2} to prove Theorems~\ref{thm2} and \ref{thm:minimal}; 
Theorem~\ref{thm1} is also proved in this section. 
In Section~\ref{section:additional-information},
we refine the arguments in Section~\ref{section:thm2}
and give more detailed information concerning the root $E_0$.
In particular, we present a convenient description of the link $L$
in case $K$ is positive amphicheiral 
(Remark~\ref{rem:arc-presentation}).
The final 
Sections~\ref{section:link2}, \ref{section:link3}, and \ref{section:link6} are 
technical: there we show that the links introduced in
Section~\ref{section:roots} are hyperbolic, completing the proof that they
fulfill all the desired requirements. 


\section{Constructing the knots}
\label{section:knot}


In this section, we show that the existence of a hyperbolic link with a
specified symmetry and some extra properties is sufficient to ensure the
existence of prime amphicheiral knots with free period $2$. We start by
defining the symmetry we want.

Let $\G=\langle \gamma_1,\gamma_2\rangle$ be the group of isometries of
the $3$-sphere
$\threesphere=\{(z_1,z_2)\in \C^2 \ | \ |z_1|^2+|z_2|^2 =1\}$
with the standard metric,
generated by the following two commuting orientation-reversing isometric involutions
$\gamma_1$ and $\gamma_2$.
\begin{equation}
\label{group_action}
\gamma_1(z_1,z_2)=(\bar z_1,z_2), \quad
\gamma_2(z_1,z_2)=(-\bar z_1,-z_2).
\end{equation}
The involution $\gamma_1$ is a reflection in the $2$-sphere
$\threesphere\cap (\R\times\C)$,
and the involution $\gamma_2$ is a reflection
in the $0$-sphere $\{(\pm i,0)\}$.
The composition $f:=\gamma_1\gamma_2$
is an orientation-preserving free involution
$(z_1,z_2)\mapsto (-z_1,-z_2)$.
If we identify $\threesphere$ with $\R^3\cup\{\infty\}$, where $\infty$
corresponds to the point $(i,0)\in \threesphere$,
then $\gamma_1$ is an inversion in a unit $2$-sphere in $\R^3$
and $\gamma_2$ is an antipodal map with respect to the origin.
If we identify $\threesphere$ with $\R^3\cup\{\infty\}$, where $\infty$
corresponds to the point $(1,0)\in \threesphere$,
then $\gamma_1$ is a reflection in a $2$-plane in $\R^3$
and $\gamma_2$ has a unique fixed point 
in each of the two half-spaces defined by the $2$-plane. 

\begin{definition}
\label{d:root}
Let $L=K_0\cup \OO_{\mu}$ be a link. We will say that \emph{$L$ provides an
admissible root} if it satisfies the following requirements:
\begin{itemize}
\item $L$ is $\G$-invariant and $K_0$ is 
stabilised by the whole group $\G$.
\item $\OO_{\mu}$ is a trivial link with $\mu$ components.
\item $L$ is hyperbolic.
\end{itemize}
\end{definition}

A link $L$ providing an admissible root can be used to construct a prime
satellite knot in the following way: remove a $\G$-invariant open regular neighbourhood of
$\OO_{\mu}=\cup_{i=1}^\mu O_i$ and glue non-trivial knot exteriors, $E(K_i)$,
$i=1,\dots,\mu$, along the corresponding boundary components in such a way that
the longitude and meridian of $O_i$ are identified with the meridian and
longitude of $E(K_i)$ respectively. The image of $K_0$ in the resulting
manifold is a prime knot $K$ in $\threesphere$.

Note that
\[
(\threesphere,K)=(E(\OO_{\mu}), K_0)\cup ( \cup_{i=1}^{\mu} (E(K_i), \emptyset)),
\]
where $E(\OO_{\mu})$ is the exterior of $\OO_{\mu}$ which is $\G$-invariant.

We wish to perform the gluing so that the following conditions hold.
\begin{enumerate}
\item[(i)]
Each element of $\G$ \lq\lq extends'' to a
diffeomorphism of $(\threesphere, K)$.
To be precise, for each element $g\in\G$,
the restriction, $\check g$, of $g$  
to $(E(\OO_{\mu}),K_0)$ extends to a
diffeomorphism, $\hat g$, of $(\threesphere, K)$.
\item[(ii)]
An \lq\lq extension'' $\hat f$ of $f=\gamma_1\gamma_2$ to $(\threesphere, K)$ is a free involution.
\end{enumerate}
Note that we do not intend to extend the action of $\G$ to $(\threesphere,K)$.
In fact, such an extension does not exist generically
(see Remarks~\ref{rem:non-periodic} and \ref{r:strongly-invertible}, and
Proposition~\ref{p:strongly-invertible}).

To this end we need to choose the knot exterior $E(K_i)$ appropriately
according to the stabiliser in $\G$ of the component $O_i$ of the link $L$.

\medskip
{\bf Case 1.} \emph{The stabiliser of $O_i$ is trivial.} In this case, one
can choose $K_i$ to be any non-trivial knot in $\threesphere$. The same
knot exterior must be chosen for the other three components of $\OO_{\mu}$ in the
same $\G$-orbit as $O_i$ and the gluing must be carried out in a $\G$-equivariant way.
Note that this is possible because the elements of $\G$ map meridians
(respectively longitudes) of the components of $L$ to meridians (respectively
longitudes).

\medskip
{\bf Case 2.} \emph{The stabiliser of $O_i$ is generated by $\gamma_j$ for some $j\in\{1,2\}$.} 
Let $T_i$ be the boundary component of $E(\OO_{\mu})$ which forms the
boundary of a regular neighbourhood of $O_i$, and let $\ell_i$ and $m_i$ be the
longitude and the meridian 
curves in $T_i=\partial E(K_i)$ of the knot $K_i$.
Recall that these are the meridian and longitude of $O_i$ respectively.
We see that $\gamma_j$ acts on $H_1(T_i;\Z)$ either as
\[
(\gamma_j)_*\begin{pmatrix}\ell_i \\ m_i\end{pmatrix}=
\begin{pmatrix}\ell_i \\ -m_i\end{pmatrix}
\]
or
\[
(\gamma_j)_*\begin{pmatrix}\ell_i \\ m_i\end{pmatrix}=
\begin{pmatrix}-\ell_i \\ m_i\end{pmatrix}.
\]
In the former case, we need to choose $K_i$ to be a positive-amphicheiral 
knot in $\threesphere$, while in the latter case we need to
choose $E(K_i)$ to be the exterior of a negative-amphicheiral knot. With this
choice, the restriction of the involution $\gamma_j$ to $(E(\OO_{\mu}), K_0)$
extends to a diffeomorphism of $(E(\OO_{\mu})\cup E(K_i), K_0)$.

This can be seen as follows.
Because of the chosen type of chirality of $K_i$,
there is an orientation-reversing diffeomorphism, $\nu_i$, of $E(K_i)$
whose induced action on $H_1(T_i;\Z)$ is equal to that of $(\gamma_j)_*$.
Then the restrictions of $\gamma_j$ and $\nu_i$ to $T_i$ are smoothly
isotopic. So, by using a collar neighbourhood of $T_i$,
we can glue the diffemorphisms $\gamma_j$ and $\nu_i$
to obtain the desired diffeomorphism of $(E(\OO_{\mu})\cup E(K_i), K_0)$.

For the other component $O_{i'}=f(O_i)$
of $L$ in the $\G$-orbit of $O_i$,
we glue a copy of the same knot exterior along $O_{i'}$,
so that the free involution extends to a free involution
of $(E(\OO_{\mu})\cup E(K_i)\cup E(K_{i'}), K_0)$,
which exchanges $E(K_i)$ with $E(K_{i'})$.
By the previous argument, $\gamma_j$ also extends to a
diffeomorphism of $(E(\OO_{\mu})\cup E(K_i)\cup E(K_{i'}), K_0)$,
and so do all elements of $\G$.

\medskip
{\bf Case 3.} \emph{The stabiliser of $O_i$ is generated by $f$.} This case
cannot arise as shown by the following lemma.

\begin{Lemma}
\label{lem:no-f-stab}
Let $L$ be a link which is invariant by the action of $\G$. Then no component of
$L$ can be stabilised precisely by the cyclic subgroup of $\G$ generated by $f$.
\end{Lemma}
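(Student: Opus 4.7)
The plan is to argue by contradiction. Suppose some component $C$ of $L$ has stabiliser in $\G$ exactly $\langle f\rangle$, so $f(C)=C$ while $\gamma_1(C)\neq C$. Since $C$ and $\gamma_1(C)$ are then two distinct components of the link $L$, they are disjoint subsets of $\threesphere$. The first step is to observe that this forces $C$ to be disjoint from $\Fix(\gamma_1)$: any point $x\in C\cap\Fix(\gamma_1)$ would satisfy $\gamma_1(x)=x$, placing $x$ in $C\cap\gamma_1(C)$, a contradiction.

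Next, recall that $\Fix(\gamma_1)=\threesphere\cap(\R\times\C)$ is a $2$-sphere, which separates $\threesphere$ into two open $3$-balls
\[
B_{\pm}=\{(z_1,z_2)\in\threesphere\ |\ \pm\mathrm{Im}(z_1)>0\}.
\]
Since $C$ is connected and disjoint from $\Fix(\gamma_1)$, it lies entirely in one of these two balls; without loss of generality, $C\subset B_+$.

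The final step is the observation that the free involution $f$ exchanges $B_+$ and $B_-$: this is immediate from $f(z_1,z_2)=(-z_1,-z_2)$, which negates $\mathrm{Im}(z_1)$. Consequently $f(C)\subset B_-$, contradicting $f(C)=C\subset B_+$, and the lemma follows.

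The argument is short once the right picture is in mind; the main (and only) obstacle is to recognise that the separating fixed sphere $\Fix(\gamma_1)$ of one of the reflections, together with the fact that $f=\gamma_1\gamma_2$ swaps the two $3$-balls it bounds, is precisely what obstructs the existence of a link component stabilised only by $\langle f\rangle$. No deeper machinery (such as homology or covering-space arguments via the quotient $\threesphere/\langle f\rangle$) is required.
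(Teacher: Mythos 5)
Your proof is correct and follows essentially the same route as the paper's: both arguments hinge on the facts that $f$ exchanges the two balls of $\threesphere\setminus\Fix(\gamma_1)$ and that a component meeting $\Fix(\gamma_1)$ is automatically $\gamma_1$-invariant. The paper phrases this as a direct implication (an $f$-invariant component must meet $\Fix(\gamma_1)$, hence is $\gamma_1$-invariant), while you run the contrapositive by contradiction, but the content is identical.
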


\begin{proof}
Assume that $L_j$ is a component whose stabiliser contains $f$. Since the two
balls of $\threesphere\setminus \Fix(\gamma_1)$ are exchanged by $f$, $L_j$
cannot be contained in one of them, and must thus intersect $\Fix(\gamma_1)$. It
follows that $\gamma_1$ belongs to the stabiliser of $L_j$,
and hence the stabiliser of $L_j$ is the whole group $\G$.
\end{proof}

Note that for the conclusion of the lemma to be valid, we do not need to assume
that $L$ is hyperbolic.

\medskip
{\bf Case 4.} \emph{The stabiliser of $O_i$ is $\G$.}
In this case we choose $K_i$ to be a non-trivial amphicheiral knot with free period $2$; 
recall that, without appealing to Theorem~\ref{thm1}, there
are composite knots with these properties. 
In fact, the connected sum of two copies of
an $\epsilon$-amphicheiral knot is $\epsilon$-amphicheiral and has free period $2$
(cf. \cite[Theorem 4]{S1}).
Now note that the action of $f$ on $H_1(T_i)$ is trivial and therefore
the actions of $\gamma_1$ and $\gamma_2$ on $H_1(T_i)$ are identical.
Hence we may choose $E(K_i)$ to be the exterior of a positive- or negative-amphicheiral knot accordingly,
so that $E(K_i)$ admits an orientation-reversing diffeomorphism, $\nu_i$,
such that the action of $\nu_i$ on $H_1(T_i)$ is identical with
those of $\gamma_1$ and $\gamma_2$.
Thus both $\gamma_1$ and $\gamma_2$ 
extend to diffeomorphisms of
$(E(\OO_{\mu})\cup E(K_i), K_0)$, as in Case 2.

Though the composition of the extensions of $\gamma_1$ and $\gamma_2$ may not be an involution,
we can show that $f=\gamma_1\gamma_2$ extends to a free involution
on $(E(\OO_{\mu})\cup E(K_i), K_0)$.
To see this, we use the fact that the strong equivalence class 
of a free $\Z/n\Z$-action on a torus $T$ is determined by its {\it slope},
which is the submodule of $H_1(T;\Z/n\Z)$ isomorphic to $\Z/n\Z$,
represented by a simple loop on $T$ which is obtained as an orbit of a free circle action
in which the $\Z/n\Z$-action embeds (see \cite[Section 2]{S1}).
Here two actions are {\it strongly equivalent} if they are conjugate by a diffeomorphism
which is isotopic to the identity.
Now, as in Case 1, let $\ell_i$ and $m_i$ be the longitude and the meridian
in $T_i=\partial E(K_i)$ of the knot $K_i$.
Then since $f$ gives a free period $2$ of the trivial knot $O_i$,
the slope of the action of $f$ on $T_i$ is generated by $\ell_i+m_i$
by \cite[Lemma 1.2(3)]{S1}.
Let $f_i$ be a free involution on $E(K_i)$ which realise the free periodicity of $K_i$.
Then the slope of $f_i$ on $T_i$ is also generated by $\ell_i+m_i$.
Thus the restrictions of $f$ and $f_i$ to $T_i$ are strongly equivalent
by \cite[Lemma 1.1]{S1}.
Hence,  by using a collar neighbourhood of $T_i$, 
we can glue these free involutions to obtain a free involution on $(E(\OO_{\mu})\cup E(K_i), K_0)$.

\medskip
The above case-by-case discussion shows that,
given a link $L=K_0\cup\OO_{\mu}$ which provides an admissible root,
we can construct a knot $(\threesphere,K)$
by attaching suitable knot exteriors $\{ E(K_i)\}_{1\le i\le\mu}$
to $(E(\OO_{\mu}),K_0)$,
such that all elements of $\G$ extend to diffeomorphisms of
$(\threesphere,K)$
where an extension of $f$ is a free involution.
It is now clear that the resulting knot $K$ is a prime amphicheiral knot and
has free period $2$.

\begin{remark}
\label{rem:non-periodic}
According to Theorem~\ref{thm2} (see also Claim~\ref{c:atleast2}), $L$ must
contain a component $O_i$ whose stabiliser is
generated by $\gamma_1$, where $\gamma_1$ acts on the boundary torus $T_i$
as a reflection in two meridians of $O_i$.
Thus the knot $K_i$ must be positive amphicheiral.
The orientation-reversing diffeomorphism $\nu_i$ of $E(K_i)$
realising the positive amphicheirality of $K_i$ 
is not necessarily an involution nor a periodic map.
Even if $\nu_i$ is an involution, 
its restriction to $T_i$ is not strongly equivalent to that of $\gamma_1$.
In this case, the square of the diffeomorphism obtained by gluing $\gamma_1$ and $\nu_i$
is a non-trivial Dehn-twist along the
JSJ torus $T_i$.
As a consequence, the extension of $\gamma_1$ can never be periodic.
\end{remark}


\section{Some explicit examples of links providing admissible roots}
\label{section:roots}


To complete the proof of the existence of prime amphicheiral knots with free
period $2$ we still need to produce a link satisfying the requirements of
Definition~\ref{d:root}.

In this section we shall define three links, 
$L_{\mu}=K_0\cup \OO_{\mu}$ with $\mu=2,3,6$, 
and show that they provide admissible roots.
The three different links will allow us to produce prime amphicheiral knots
with different properties.


\subsection{The link $L_2$}
\label{ss:l2}


\begin{figure}[h]
\begin{center}
 {
\includegraphics[height=11cm]{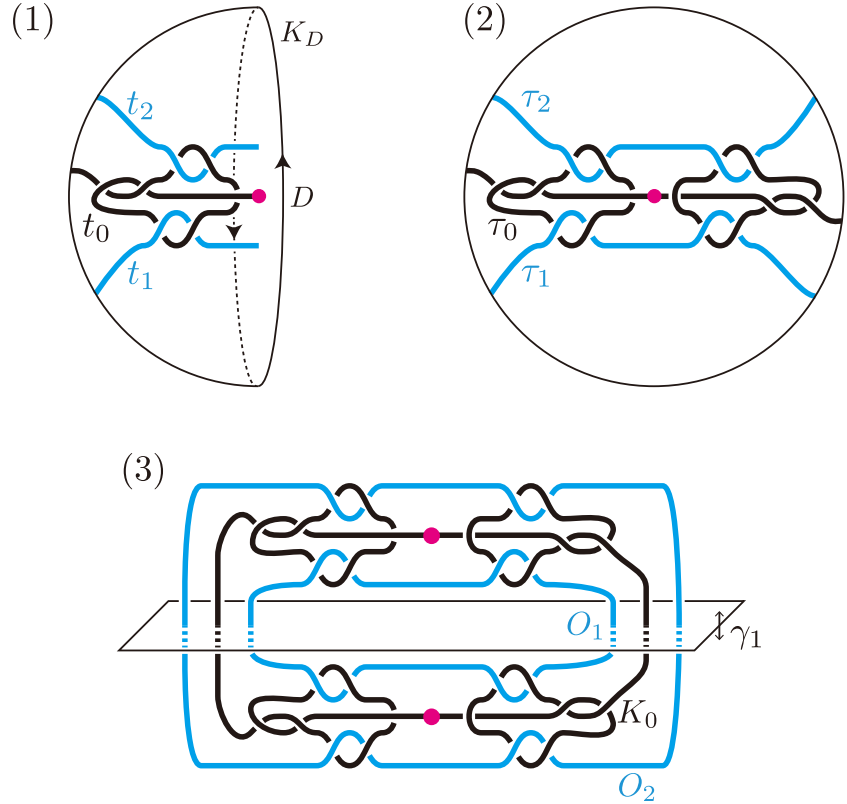}
 }
\end{center}
\caption{(1) The three-string tangle inside the half-ball fundamental domain (top
left), (2) the first symmetrisation with respect to the antipodal map (top
right), and (3) the resulting link $L_2=K_0\cup O_1\cup O_2$ 
after symmetrising with respect to the
inversion in a sphere.}
\label{fig:root}
\end{figure}

Consider the link depicted 
in Figure~\ref{fig:root}(3).
The link was constructed in such a way as to ensure that it admits the desired
$\G$-action in the following way.
Consider a fundamental domain for the $\G$-action, define a tangle
inside the domain, and get a link by symmetrising the domain and the
tangle it contains thanks to the $\G$-action.

It is not difficult to see that $\G$ has a fundamental domain which consists in
``half a ball": 
indeed each of the $3$-balls bounded by the $2$-sphere $\Fix(\gamma_1)$
forms a $\gamma_2$-invariant fundamental domain for $\langle \gamma_1\rangle$,
and the restriction of the antipodal map $\gamma_2$ to each of the $3$-ball
has half the ball as a fundamental domain.
To be even more precise, the
half ball can be considered as a cone on a closed disc. The identification
induced by the action on its boundary, which consists in gluing the boundary
circles of the discs via a rotation of order $2$, allows to obtain the global 
quotient which is a cone on a projective plane: the vertex of the cone is a 
singular point of order two (image of the fixed-points of $\gamma_2$), and the 
base of the cone is a silvered projective plane (image of the fixed-points of 
$\gamma_1$).

We define a three-component tangle inside the half ball as shown in Figure~\ref{fig:root}(1). 
One component is a ``trefoil arc" (i.e. a
trefoil knot cut at one point): this component will give rise to $K_0$ which
is $\G$-invariant, and thus contains the vertex of the cone. The other two
components are unknotted arcs which are entangled with the first component. The
procedure and result of symmetrising the tangle 
are shown in
Figure~\ref{fig:root}(2) and (3). Because of its very construction it is now clear that
$L_2$ has the required $\G$-action 
where $K_0$ is $\G$-invariant. 
It is also clear
from the picture that the components $O_1$ and $O_2$ of $L_2$ form a trivial
link.

It remains to show that the link is hyperbolic. The proof of this fact is rather
technical and is given in Section~\ref{section:link2}.


\subsection{The link $L_3$}
\label{ss:l3}


\begin{figure}[h]
\begin{center}
\includegraphics[height=5cm]{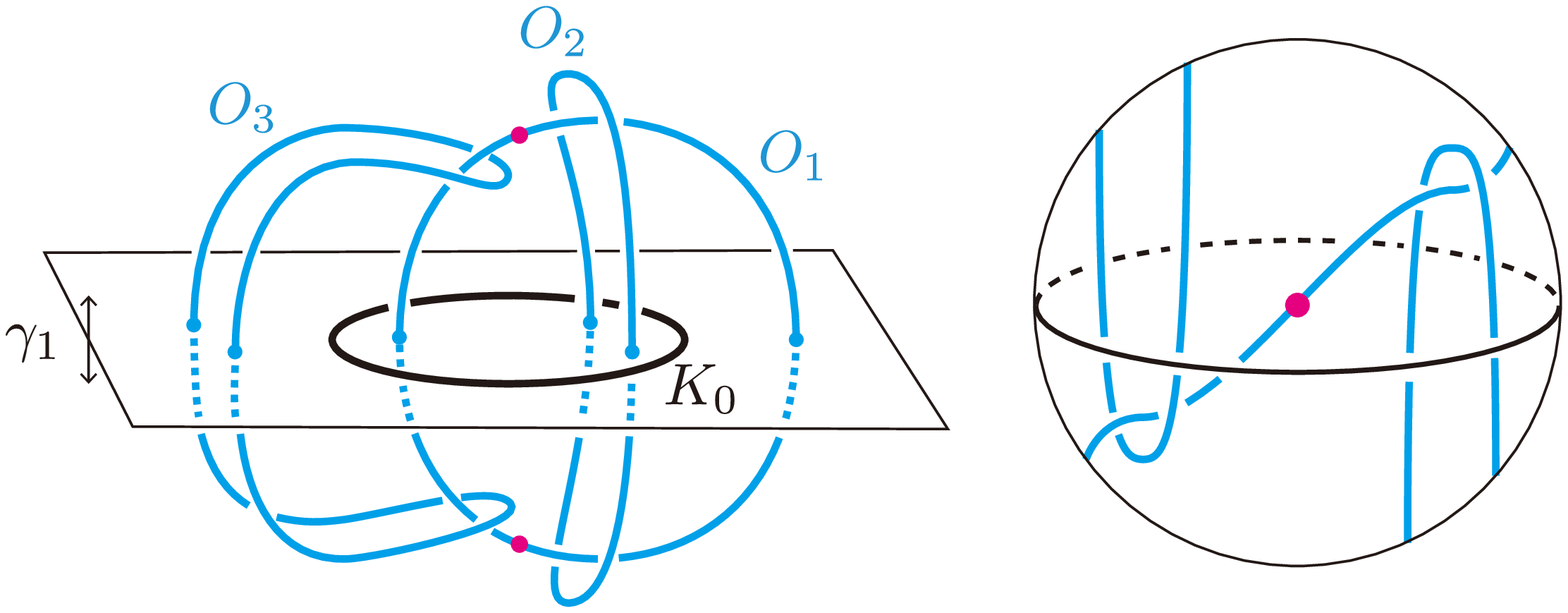}
\end{center}
\caption{The link $L_3$ pictured on the left is symmetric with respect to 
$\gamma_1$, the reflection in the horizontal plane. 
The tangle obtained as the quotient of $L_3$ by $\gamma_1$ is shown on the 
right: it is left invariant by the central symmetry of the ball,
which lifts to the involution $\gamma_2$ whose fixed point set consists of the
two 
points of $O_1$ marked in red.}
\label{fig:positive}
\end{figure}

\begin{figure}[h]
\begin{center}
\includegraphics[height=5cm]{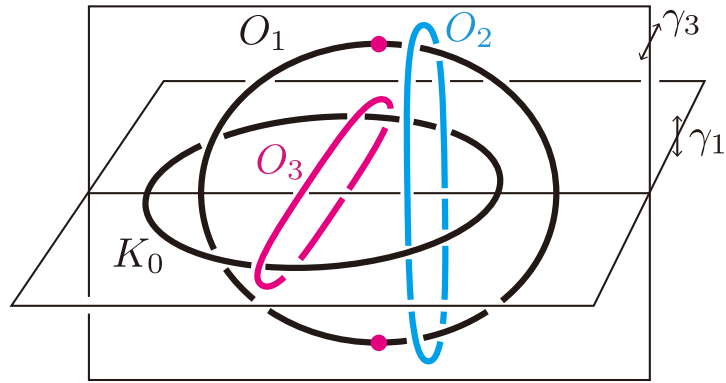}
\end{center}
\caption{An image of $L_3$ displaying the extra symmetry of the link.}
\label{fig:extra-sym}
\end{figure}

As in the previous subsection, we build a $\G$-symmetric link from a tangle in a
fundamental domain for the $\G$-action, as illustrated in
Figure~\ref{fig:positive}.

The resulting link 
$L_3$ and its component $K_0$ are $\G$-invariant by
construction. It is clear from the picture that $O_1\cup O_2\cup O_3$ is a
trivial link. Hyperbolicity of 
$L_3$ is again rather technical to establish and
will be shown in Section~\ref{section:link3}.

We remark that the symmetry group of $L_3$ is larger than $\G$. In fact, it
contains another reflection $\gamma_3$ in a $2$-sphere 
(see Figure~\ref{fig:extra-sym}). As a consequence, $L_3$ is invariant by the 
action of another Klein four-group 
$\G'=\langle \gamma_1',\gamma_2' \rangle = r^{-1}\G r$, 
where $r$ is the $\pi/2$-rotation about $Fix(\gamma_1)\cap Fix(\gamma_3)$,
and $\gamma_1'=r^{-1}\gamma_1 r =\gamma_3$ and $\gamma_2'=r^{-1}\gamma_2 r=r_1r_2r_3$.
Notice that the element 
$h:=r^2=\gamma_1\gamma_3$
is a $\pi$-rotation
acting as a strong inversion of both $K_0$ and $O_1$. (Here a \emph{strong
inversion} of a knot is an orientation-preserving smooth involution of
$\threesphere$ preserving the knot whose fixed-point set is a circle
intersecting the knot transversely in two points.)


\subsection{The link $L_6$}
\label{ss:l6}


In this case, because of the relatively large number of components, the tangle
obtained by intersecting the link with a fundamental domain is harder to
visualise. Instead of exhibiting the tangle, 
we give in Figure~\ref{fig:link3}
two pictures of the link showing that the link is symmetric with respect to
both $\gamma_1$ and $\gamma_2$. Remark that $L_6$ is a highly symmetric link, namely
it is symmetric with respect to three more reflections in vertical planes
perpendicular to $\Fix(\gamma_1)$, that is the plane of projection  
in Figure~\ref{fig:link3}.
Observe that the product of two reflections in these vertical planes
results in a $2\pi/3$-rotation, while the product of one of these reflections
with $\gamma_1$ is a $\pi$-rotation acting as a strong inversion of $K_0$.

\begin{figure}[h]
\begin{center}
 {
\includegraphics[height=5cm]{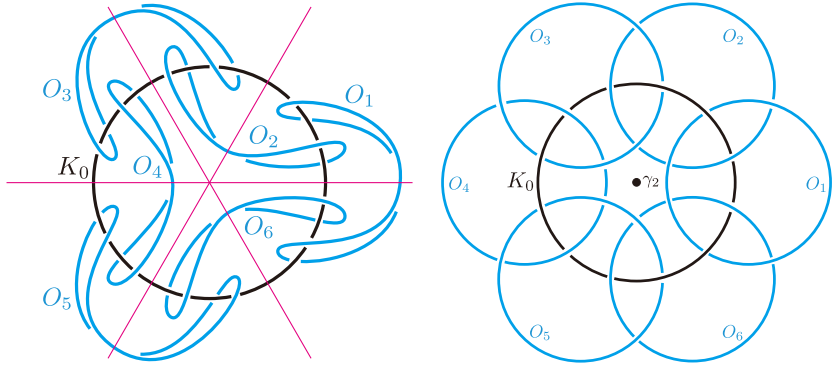}
 }
\end{center}
\caption{Two diagrams of the link $L_6$ showing the presence of different
symmetries. On the left, the link is symmetric with respect to the plane of
projection: this symmetry corresponds to $\gamma_1$. The pink lines are the
axis of the $\pi$-rotations acting as a strong inversions on $K_0$: they
coincide with the intersection of $\Fix(\gamma_1)$ with the fixed-point sets of
the other reflections that leave $L_6$ invariant. On the right, the link is
symmetric with respect to a central reflection corresponding to $\gamma_2$.}
\label{fig:link3}
\end{figure}

Once more, we postpone the proof that $L_6$ is hyperbolic to
Section~\ref{section:link6}, where we will also see that this example can be
generalised to give an infinite family of links providing admissible roots.


\section{Structure of prime amphicheiral knots with free period $2$}
\label{section:uniqueness}

In this section, we show that any prime amphicheiral knot with free period $2$
is constructed as in Section~\ref{section:knot}.

\begin{Theorem}
\label{prop:structure}
Let $K$ be a prime amphicheiral knot with free period $2$.
Then there is a 
link $L=K_0\cup \OO_{\mu}$ 
which provides an admissible root and satisfies the following conditions.
\begin{enumerate}
\item
There are non-trivial knots $K_i$ ($i=1,\cdots, \mu$) such that
\[
(\threesphere,K)=(E(\OO_{\mu}), K_0)\cup ( \cup_{i=1}^{\mu} (E(K_i), \emptyset)).
\]
Here, if a component $O_i$ of $\OO_{\mu}$ is stabilised by $\gamma_j$ for some $j=1$ or $2$,
then the knot $K_i$ is negative-amphicheiral or positive-amphicheiral 
according to whether $\gamma_j$ preserves or reverses a fixed orientation of $O_i$.
\item
For each element $g$ of $\G$, its restriction to $(E(\OO_{\mu}), K_0)$ extends to a diffeomorphism of $(\threesphere,K)$,
which we call an extension of $g$.
Moreover some extension of $f=\gamma_1\gamma_2\in \G$ is a free involution on $\threesphere$. 
Furthermore, if $K$ is $\epsilon$-amphicheiral,
then an extension of $\gamma_1$ or $\gamma_2$ realises 
the $\epsilon$-amphicheirality of $K$.
\end{enumerate}
\end{Theorem}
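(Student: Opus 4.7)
My plan would be to reduce everything to the root of the JSJ decomposition and then realise the required symmetries geometrically.

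First, I would begin with the JSJ decomposition of $E(K)$. Since $K$ is amphicheiral with free period~$2$, $K$ cannot be hyperbolic (by the second result recalled in Section~\ref{section:intro}, from \cite{S1, S2}) and cannot be a torus knot (torus knots are chiral), so $E(K)$ admits a non-trivial JSJ decomposition. Let $E_0$ be its root, the piece containing $\partial E(K)$, with boundary $\partial E(K)\cup T_1\cup\cdots\cup T_\mu$. By uniqueness of the JSJ decomposition up to isotopy, any self-diffeomorphism of $(\threesphere,K)$ preserves $E_0$ up to isotopy, so both the orientation-reversing diffeomorphism $\varphi$ realising the amphicheirality and the free involution $f$ preserve $E_0$. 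A further argument in the spirit of \cite{S1, S2} rules out $E_0$ being Seifert fibered, by exploiting that $f|_{E_0}$ is a free involution preserving $\partial E(K)$ together with the existence of the orientation-reversing $\varphi|_{E_0}$; hence $E_0$ is hyperbolic.

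Next, by Mostow rigidity, isotopy classes of self-diffeomorphisms of the hyperbolic manifold $E_0$ are represented by isometries, and the subgroup $\Isom^*(E_0)\subseteq\Isom(E_0)$ of classes that extend over $(\threesphere,K)$ is finite. The key step is to locate inside $\Isom^*(E_0)$ a Klein four-subgroup $\G_0=\langle\gamma_1,\gamma_2\rangle$ of commuting orientation-reversing involutions with $\gamma_1\gamma_2=f|_{E_0}$, such that one of $\gamma_1,\gamma_2$ extends to an orientation-reversing diffeomorphism of $(\threesphere,K)$ realising the required $\epsilon$-amphicheirality of $K$. The plan is to start from the isometric representative of $\varphi|_{E_0}$ and, possibly after composing it with $f|_{E_0}$ or with a suitable power of itself, normalise it inside the centraliser of $f|_{E_0}$ to produce the desired orientation-reversing involution $\gamma_1$; then $\gamma_2:=\gamma_1 f|_{E_0}$.

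Finally, I would realise the abstract isometric $\G_0$-action on $E_0$ as the restriction of the standard $\G$-action on $\threesphere$. Performing $\G_0$-equivariant Dehn fillings along the tori $T_1,\ldots,T_\mu$ with slopes chosen so that the resulting closed manifold is $\threesphere$ with the standard $\G$-action, one obtains a link $L=K_0\cup\OO_\mu$ in $\threesphere$: the component $K_0$ arises from $\partial E(K)$ and is $\G$-invariant (since its boundary torus is preserved set-wise by $\G_0$), while $\OO_\mu$ consists of the cores of the filling solid tori. Triviality of $\OO_\mu$ is then read off from the $\G$-orbit combinatorics together with the explicit form of the $\G$-action, typically by using the $\G$-invariant $2$-sphere $\Fix(\gamma_1)$ to construct pairwise disjoint spanning discs. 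The reconstruction of $(\threesphere,K)$ out of $E(L)$ follows the case analysis of Section~\ref{section:knot}: to each $O_i$ one glues the knot exterior $E(K_i)$ of the JSJ companion that was sitting on the other side of $T_i$ in the original $E(K)$, and the chirality condition on $K_i$ in item~(1) is dictated by the induced action of the stabiliser of $O_i$ on $H_1(T_i;\Z)$, exactly as in Section~\ref{section:knot}. The main obstacles are producing the Klein four-subgroup $\G_0$ (requiring finite-group manipulations inside $\Isom^*(E_0)$, since a priori neither $\varphi|_{E_0}$ is an involution nor does it commute with $f|_{E_0}$) and executing the equivariant realisation so that $\OO_\mu$ ends up trivial, where one must simultaneously arrange equivariant slopes, orbit combinatorics, and global topology.
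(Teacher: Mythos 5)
Your overall architecture matches the paper's: pass to the JSJ root $E_0$, show it is hyperbolic, work inside the finite group $\Isom^*(E_0)$ to produce a Klein four-group containing $f$, realise it as the standard $\G$ on $\threesphere$, and reassemble $(\threesphere,K)$ as in Section~\ref{section:knot}. However, the two steps you flag as ``main obstacles'' are exactly where the substance lies, and your sketch does not close either of them. For the Klein four-group: it is not enough to ``normalise inside the centraliser of $f|_{E_0}$''. The paper's argument hinges on first proving that $\Isom^*_+(E_0)$ is a \emph{finite cyclic} normal subgroup (Lemma~\ref{lem:cyclic}(2)) --- itself requiring the extension of the action to $(\threesphere,L)$, the Smith conjecture, and the standardness of periodic maps of trivial links from \cite{S0} to exclude a composing-space configuration. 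Cyclicity makes $\langle\check f\rangle$ characteristic in $\Isom^*_+(E_0)$, hence normalised by $\check\gamma$, which is why they commute; and one must still rule out the degenerate case $\check f\in\langle\check\gamma\rangle$ (where the group would be cyclic rather than Klein four), which the paper does by a fixed-point argument: $\bar\gamma$ is an orientation-reversing periodic map of $\threesphere$, so $\Fix(\bar\gamma)\ne\emptyset$, contradicting freeness of $\bar f$. None of these ingredients appears in your plan.

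For the realisation step, you cannot ``choose'' the filling so that the resulting $(\Z/2\Z)^2$-action on $\threesphere$ is the standard $\G$: the filling slopes are forced (meridian of $O_i$ $=$ longitude of $K_i$, which is why the action extends at all, cf.\ Lemma~\ref{lem:cyclic}(1)), and the action you obtain is merely some smooth $(\Z/2\Z)^2$-action. Identifying it with the linear model requires equivariant geometrization (Dinkelbach--Leeb \cite{DL}) together with Kwun's theorem \cite{K} on orientation-reversing involutions of $S^3/\bar f\cong\R P^3$; your proposal invokes neither. Two smaller inaccuracies: the triviality of $\OO_\mu$ is not ``read off from the $\G$-orbit combinatorics'' --- it is the standard satellite structure of the JSJ root of a knot exterior (\cite[Lemma 2.1]{S1}) and has nothing to do with the symmetry; and hyperbolicity of $E_0$ is not obtained from the free involution but from primeness (excluding a composing space) plus the fact that cable knots are never amphicheiral (excluding a cable space), the latter being a lemma the paper has to prove.
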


Let $K$ be a prime amphicheiral knot with free period $2$.
Then as already observed, it follows from \cite{S2} that
the exterior $E(K)$ of $K$ admits a nontrivial JSJ decomposition.
Let $E_0$ be the root of the decomposition, i.e.
the geometric piece containing the boundary, and
let $E_i$ ($1\le i\le \mu$) be the closure of the components
of $E(K)\setminus E_0$.
Then $E_0$ is identified with the exterior of a link $L=K_0\cup \OO_{\mu}$
where $\OO_{\mu}=\cup_{i=1}^{\mu}O_i$ is a $\mu$-component trivial link,
and $E_i$ is identified with a knot exterior $E(K_i)$ 
for each $i$ with $1\le i\le \mu$ 
(see e.g. \cite[Lemma 2.1]{S1}).

We show that $L$ provides an admissible root in the sense of Definition \ref{d:root},
namely, $E_0$ is hyperbolic
and, after an isotopy, both $L$ and $K_0$ are $\G$-invariant.

We first prove that the root $E_0$ is hyperbolic.
Otherwise, $E_0$ is a Seifert fibered space embedded in $E(K)$
with at least two (incompressible) boundary components and so
$E_0$ is a composing space or a cable space (see 
\cite[Lemma VI.3.4]{JS} or \cite[Lemma IX.22]{J}).
Since $K$ is prime, $E_0$ is not a composing space.
So $E_0$ is a cable space and hence $K$ is a cable knot.
However, a cable knot cannot be amphicheiral, a contradiction.
Though this fact should be well known,
we could not find a reference, so we include a proof for completeness.

\begin{Lemma}
A cable knot is not amphicheiral.
\end{Lemma}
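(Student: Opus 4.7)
The plan is to derive a contradiction from the JSJ decomposition and the Seifert structure of the cable space: any amphicheirality of $(\threesphere,K)$ would be forced to preserve the slope of the regular Seifert fibre on $\partial E(K)$, but that slope is incompatible with an orientation-reversing action on $\partial E(K)$ diagonal in the meridian--longitude basis.

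Recall that a cable knot $K$ is a $(p,q)$-cable of some non-trivial companion $K'$, with $\gcd(p,q)=1$ and $|q|\ge 2$, so that its exterior splits as
\[
E(K)=E(K')\cup_T C_{p,q},
\]
where $T=\partial N(K')$ and $C_{p,q}$ is the cable space, a Seifert fibered manifold containing $\partial E(K)$, which is therefore the root of the JSJ decomposition of $E(K)$. Suppose for contradiction that $\varphi\colon(\threesphere,K)\to(\threesphere,K)$ is orientation-reversing. Its restriction to $E(K)$ preserves the JSJ decomposition up to isotopy, and since the root is characterised intrinsically, $\varphi$ can be arranged to preserve $C_{p,q}$. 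On $H_1(\partial E(K);\Z)$, the meridian $\mu$ and the preferred longitude $\lambda$ are each preserved up to sign (being the unique disc-bounding slope in the tubular neighbourhood of $K$ and the unique null-homologous slope in $E(K)$, respectively), so $\varphi_*$ has the form $\mathrm{diag}(\epsilon_1,\epsilon_2)$ in the basis $\{\mu,\lambda\}$; moreover $\epsilon_1\epsilon_2=-1$ since $\varphi$ reverses the orientation of $\partial E(K)$.

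The key step is that $C_{p,q}$ admits a unique Seifert fibration up to isotopy, so $\varphi|_{C_{p,q}}$ is isotopic to a fibre-preserving diffeomorphism; in particular, $\varphi|_{\partial E(K)}$ must preserve the unoriented slope $\sigma$ of the regular fibre. I would identify $\sigma=pq\,\mu+\lambda$ by noting that the regular fibres on $\partial E(K)$ are parallel copies of $K$ lying on $T$, so they realise the cable framing, whose self-linking with $K$ equals $pq$. Since $|q|\ge 2$ and $p\ne 0$ give $pq\ne 0$, the matrix $\mathrm{diag}(\epsilon_1,\epsilon_2)$ with $\epsilon_1\epsilon_2=-1$ sends $pq\,\mu+\lambda$ to $\epsilon_1 pq\,\mu+\epsilon_2\lambda$, which represents a different unoriented slope, yielding the desired contradiction.

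The main obstacle I expect is justifying the two classical ingredients used in the last step: uniqueness up to isotopy of the Seifert fibration of the cable space (which follows from the classification of Seifert manifolds with more than one fibration, none of which is a cable space) and the cable framing identity $\sigma=pq\,\mu+\lambda$. Both facts are standard, but should be cited precisely, for instance from \cite{JS}.
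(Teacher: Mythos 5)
Your argument is correct, and the decisive step is genuinely different from the paper's. Both proofs begin the same way: the root of the JSJ decomposition is the cable space, an orientation-reversing symmetry of $(\threesphere,K)$ can be isotoped to preserve it, and (by uniqueness of the Seifert fibration of the cable space, cf.\ \cite[Theorem 3.9]{Scott}) to preserve its fibration. The paper then reads off the constraint from the \emph{exceptional} fibre: the Seifert invariant must satisfy $q/p\equiv -q/p$ in $\Q/\Z$, which only forces $p=2$, so a second, external argument is needed --- identifying $E_0$ with the exterior of the pretzel link $P(2,-2,q)$ and invoking the non-amphicheirality of that link via \cite[Theorem 4.1]{S3}. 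You instead read off the constraint from the \emph{regular} fibre on $\partial E(K)$: its slope is the cabling slope $pq\,\mu+\lambda$ with $pq\neq 0$, while any orientation-reversing symmetry of a knot acts on $H_1(\partial E(K);\Z)$ as $\mathrm{diag}(\epsilon_1,\epsilon_2)$ with $\epsilon_1\epsilon_2=-1$ and so cannot preserve that unoriented slope. This disposes of all $p$ at once and keeps the proof self-contained, at the cost of needing the cable-framing identity (standard, e.g.\ from the boundary slope of the cabling annulus); the paper's route avoids that identity but imports the classification of amphicheiral pretzel links. One small point to make explicit when you write it up: the isotopy making $\varphi|_{C_{p,q}}$ fibre-preserving does not change the induced map on $H_1(\partial E(K);\Z)$, which is why the diagonal form and the fibre-slope condition can be imposed simultaneously.
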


\begin{proof}
Let $K$ be a $(p,q)$-cable of some knot,
where $p$ is a positive integer greater than $1$
and $q$ is an integer relatively prime to $p$.
Then the root $E_0$ of the JSJ decomposition of $E(K)$ is
the Seifert fibered space with base orbifold an annulus with one cone point,
such that the singular fiber has index $(p,q)$.
If $K$ is amphicheiral,
then $E_0$ admits an orientation-reversing diffeomorphism, $\gamma$,
and we may assume that $\gamma$ preserves the Seifert fibration
(see e.g. \cite[Theorem 3.9]{Scott}).
Hence we have $q/p\equiv -q/p \in \Q/\Z$, and so $p=2$.
Thus $E_0$ is identified with the exterior of the pretzel link $P(2,-2,q)$.
Since $\gamma$ is a restriction of a diffeomorphism of $(\threesphere,K)$ to
$E_0$, it extends to a diffeomorphism of $(\threesphere,P(2,-2,q))$, which
reverse the orientation of $\threesphere$.
This contradicts the fact that $P(2,-2,q)$ is not amphicheiral.
Here the last fact can be seen, for example,
by using \cite[Theorem 4.1]{S3}.
\end{proof}

Let $\Isom^*(E_0)$ be the subgroup of the isometry group
of the complete hyperbolic manifold $E_0$
consisting of those elements, $g$,
which extend to a diffeomorphism of $(\threesphere,K)$. 
(To be precise, we identify $E_0$ with the non-cuspidal part
of a complete hyperbolic manifold.)
Denote by
$\Isom^*_+(E_0)$ the subgroup of $\Isom^*(E_0)$
consisting of elements whose extensions to $(\threesphere,K)$
preserve the orientation of both $\threesphere$ and $K$.
Then we have the following lemma,
which holds a key to
the main result in this section.
Thus we include a proof,
even though it follows from
\cite[the last part of the proof of Lemma 2.2]{S1}.

\begin{Lemma}
\label{lem:cyclic}
{\rm (1)} The action of  $\Isom^*(E_0)$ on $E_0$ extends to 
a smooth action on $(\threesphere,L)$.

{\rm (2)} $\Isom^*_+(E_0)$ is a finite cyclic group.
\end{Lemma}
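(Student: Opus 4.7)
For part (1), the plan is to show that every $g\in\Isom^*(E_0)$ sends the meridian slope of each component $O_i$ of $\OO_{\mu}$ to the meridian slope of some $O_j$, so that $g$ can be extended across the Dehn fillings recovering $(\threesphere, L)$ from $E_0=E(L)$. Choose any extension $\hat g\in\mathrm{Diff}(\threesphere,K)$ of $g$. Since the JSJ decomposition is canonical and $E_0$ is characterised as the piece containing $\partial E(K)$, $\hat g$ preserves $E_0$ and permutes the JSJ tori $T_i=\partial E(K_i)$ by some permutation $\sigma$. The induced diffeomorphism $\hat g\colon E(K_i)\to E(K_{\sigma(i)})$ preserves the homological longitude -- the unique slope on $\partial E(K_i)$ that is null-homologous in $E(K_i)$. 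Under the satellite gluing, in which the longitude of $K_i$ is identified with the meridian of $O_i$, this means $g$ sends the meridian of $O_i$ to the meridian of $O_{\sigma(i)}$ (up to sign). A parallel argument applies at $\partial E(K)=\partial\mathrm{nbhd}(K_0)$. Consequently $g$ admits a smooth extension over each solid torus $\mathrm{nbhd}(K_0)$ and $\mathrm{nbhd}(O_i)$ via a coning extension in each fibre, and a direct check shows that these extensions compose to yield a smooth action of $\Isom^*(E_0)$ on $(\threesphere,L)$.

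For part (2), the finiteness of $\Isom^*_+(E_0)$ follows from that of $\Isom(E_0)$, which is finite by Mostow rigidity. To see cyclicity, the plan is to analyse the restriction map $\tau\colon \Isom^*_+(E_0)\to\mathrm{Isom}(\partial E(K))$ to the cusp torus. Each element preserves the orientation of the cusp together with the oriented meridian and oriented longitude of $K$, so its image lies in the translation subgroup $T^2\subset\mathrm{Isom}_+(\partial E(K))$; the map $\tau$ is injective by hyperbolic rigidity, since an orientation-preserving isometry of a cusped finite-volume hyperbolic $3$-manifold whose restriction to a cusp is trivial must itself be trivial. Thus $\Isom^*_+(E_0)$ embeds in $T^2$ as a finite abelian subgroup.

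The hard part is upgrading this embedding from abelian to cyclic, since a priori a finite subgroup of $T^2$ can be of the form $\Z/a\oplus\Z/b$. My plan is to project further onto the longitudinal factor $\mathrm{SO}(2)\subset T^2$ and to show that the kernel -- consisting of $g$'s acting on $\partial E(K)$ as pure meridional translations -- is trivial. For such a $g$, the smooth extension $\tilde g\in\mathrm{Diff}_+(\threesphere, L)$ from (1) is a finite-order orientation-preserving diffeomorphism of $\threesphere$ fixing $K_0$ pointwise, so by the positive solution of the Smith conjecture its fixed-point set is an unknotted $\onesphere$, forcing $K_0$ itself to be unknotted. When $K_0$ is knotted this already yields a contradiction; in the remaining case (which occurs in the positive-amphicheiral setting, where $K_0\subset\Fix(\gamma_1)$ is trivial) one must further exploit the $\Isom^*_+(E_0)$-permutation of $\OO_{\mu}$ and the fact that $f$ acts on $\partial E(K)$ by translation along the diagonal slope $\ell+m$ (cf.\ \cite[Lemma 1.2(3)]{S1}) in order to rule out a second independent translational factor, precisely as in the last part of the proof of \cite[Lemma 2.2]{S1}.
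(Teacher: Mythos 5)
Your part (1) is correct and is essentially the paper's argument: the extension of $g$ to $(\threesphere,K)$ permutes the pieces $E(K_i)$, hence carries homological longitudes to homological longitudes, and under the satellite gluing these are the meridians of the $O_i$; the meridian-preserving finite action on each boundary torus then extends over the filling solid tori. Your reduction in part (2) via the cusp torus is also sound, and is in fact equivalent to the paper's: the kernel of your projection onto the longitudinal factor is exactly the set of elements whose extension fixes $K_0$ pointwise, which is the paper's statement that a non-cyclic $\Isom^*_+(E_0)$ must act non-effectively on the circle $K_0$ (a finite group of orientation-preserving diffeomorphisms acting effectively on a circle is cyclic). So both routes arrive at the same decisive claim: no nontrivial element of $\Isom^*_+(E_0)$ has an extension $\bar g$ with $\Fix(\bar g)=K_0$.

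That decisive claim is where your proposal has a genuine gap. You dispose of the case where $K_0$ is knotted via the Smith conjecture, but the remaining case is not a degenerate corner: $K_0$ \emph{is} unknotted for every positive-amphicheiral example (Theorem~\ref{thm2}(2) forces $K_0\subset\Fix(\gamma_1)$) and in particular for $L_3$ and $L_6$. For that case you offer only a pointer to \cite{S1} together with the suggestion that the slope $\ell+m$ of $f$ on $\partial E(K)$ rules out ``a second independent translational factor.'' That hint does not lead anywhere: $f$ itself has nontrivial longitudinal component, so its slope says nothing about whether the kernel of the longitudinal projection is trivial, and no purely cusp-side bookkeeping can decide this. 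The argument that is actually needed (and that the paper gives) is topological: such a $\bar g$ exhibits $K_0$ as the axis of a cyclic period of the trivial link $\OO_{\mu}$; by the standardness of periodic maps of trivial links \cite[Theorem 1]{S0} there are disjoint discs $D_i$ with $\partial D_i=O_i$, each either $\bar g$-invariant and meeting the axis $K_0$ once, or moved off itself. The latter makes some JSJ torus $T_i$ compressible in $E_0$, and the former for all $i$ forces $K_0\cup\OO_{\mu}$ to be a connected sum of Hopf links, so that $E_0$ is a composing space --- either way contradicting the hyperbolicity of $E_0$. Without this (or an equivalent) input your proof of cyclicity is incomplete precisely in the cases the paper most needs it.
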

\begin{proof}
(1) 
Let $\ell_i$ and $m_i$, respectively, be the longitude and the meridian
of the knot $K_i$ with $E_i=E(K_i)$ ($1\le i\le \mu$). 
Recall that they are the meridian and the longitude of $O_i$, respectively.
Since each element of $g\in \Isom^*(E_0)$ extends to a diffeomorphism
of $(\threesphere,K)$, it follows that
if $g(E_i)=E_j$ then $g(\ell_i)=\pm \ell_j$ and so
$g$ maps the meridian of $O_i$ to the meridian
(possibly with reversed orientation) of $O_j$.
Hence the action of  $\Isom^*(E_0)$ on $E_0$ extends to an action on $(\threesphere,L)$.

(2)
If $\Isom^*_+(E_0)$ is not cyclic, then
the restriction of the extended action to $K_0$ is not effective,
i.e., there is a nontrivial element, $g$,
of $\Isom^*_+(E_0)$
whose extension, $\bar g$, to $(\threesphere, K_0\cup \OO_{\mu})$
is a nontrivial periodic map with $\Fix(\bar g)=K_0$.
By the positive solution of the Smith conjecture \cite{MB},
$\Fix(\bar g)$ is a trivial knot, and
$\bar g$ gives a cyclic periodicity of the trivial link $\OO_{\mu}$.
By \cite[Theorem 1]{S0}, such periodic maps are \lq\lq standard'',
and so there are mutually disjoint discs
$D_i$ ($1\le i\le \mu$)
in $\threesphere$ with $\partial D_i=O_i$
such that, for each $i\in\{1,\cdots,\mu\}$,
either (i) $\bar g(D_i)=D_i$, so that $\Fix(\bar g)$ intersects $D_i$
transversely in a single point,
or (ii) $\bar g(D_i)=D_j$ for some $j\ne i$.
If (ii) happens for some $i$, then $D_i$ is disjoint from $\Fix(\bar g)=K_0$
and $\OO_{\mu}-O_i$; so the torus $T_i$ is compressible in $E_0$, a contradiction.
Hence the link $K_0\cup\OO_{\mu}$ is as illustrated in
Figure~\ref{fig:hopf} and therefore $E_0$ is a composing space,
a contradiction.
\end{proof}

\begin{remark}\label{r:effective}
We wish to stress that the proof of Lemma~\ref{lem:cyclic} implies that
the extension of $\Isom^*_+(E_0)$ to $(\threesphere, L)$ must act effectively
on $K_0$.
\end{remark}

\begin{figure}[h]
\begin{center}
 {
  \includegraphics[height=3cm]{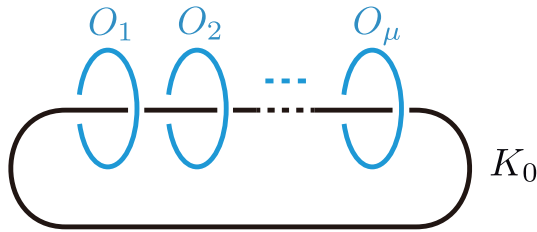}
 }
\end{center}
\caption{A connected sum of Hopf links.}
\label{fig:hopf}
\end{figure}

Let $\gamma$ be a diffeomorphism of $(\threesphere,K)$ which realises
the amphicheirality of $K$, and let $f$ be a free involution of
$(\threesphere,K)$. After an isotopy $\gamma$ preserves $E_0$ and so determines
a self-diffeomorphism of $E_0$. We will denote by $\check\gamma$ the
orientation-reversing isometry of $E_0$ isotopic to this diffeomorphism.
By \cite{BF, S1}, we may assume that the involution $f$ restricts to an
isometry $\check f$ of $E_0$.
We denote by
$\bar\gamma$ and $\bar f$
the periodic diffeomorphisms of $(\threesphere, L)$
obtained as extensions of $\check\gamma$ and $\check f$,
respectively, whose existence is guaranteed by Lemma~\ref{lem:cyclic}.

\begin{Lemma}
\label{lemma:free}
The diffeomorphism $\bar f$ is a free involution of $\threesphere$.
\end{Lemma}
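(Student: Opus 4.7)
\emph{Plan.}
My plan is first to verify that $\bar f$ is a well-defined involution, and then to analyze its fixed set using the decomposition $\threesphere = E_0 \cup N(K_0) \cup V_1 \cup \cdots \cup V_\mu$ obtained by filling each boundary component of $E_0$ with a solid torus, where $V_i$ is the regular neighbourhood of $O_i$. Because Lemma~\ref{lem:cyclic}(1) gives a group action of $\Isom^*(E_0)$ on $(\threesphere,L)$, the extension $\bar f$ of $\check f$ has order $2$ (and is non-trivial since $\check f$ is free on $E_0$). Moreover $\bar f$ is orientation-preserving and its restriction to $E_0$ coincides with the free map $\check f$; thus $\Fix(\bar f)$ lies entirely in the filling solid tori.

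Next I would treat each filling. If $\check f$ exchanges two boundary tori $T_i$ and $T_{i'}$ of $E_0$ with $i \ne i'$, then $\bar f$ swaps $V_i$ and $V_{i'}$ without fixed points, and we are done for this pair. So it suffices to treat the case where $\bar f$ preserves a given filling solid torus $V$ (either $N(K_0)$ or some $V_i$). Then $\bar f|_V$ is, up to isotopy, the unique extension over $V$ of the free translation $\check f|_{\partial V}$, and a direct analysis of orientation-preserving involutions of $D^2 \times S^1$ shows that this extension is free precisely when the slope of $\check f|_{\partial V}$, written in the basis $(m,\ell)$ of meridian and longitude of the core, equals $\ell$ (translation type) or $m+\ell$ (screw type). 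Only slope $s=m$ would yield an extension with fixed core, and this is what must be excluded.

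To rule out the bad slope $s=m_{O_i}$ on $T_i$, I would invoke Sakuma's slope formula. After a small equivariant ambient isotopy supported in the solid torus in $\threesphere$ complementary to $E_i$ (a tubular neighbourhood of the companion knot $K_i$), we may assume that $f$ preserves $K_i$ setwise; such an $f$-invariant core always exists since an orientation-preserving free involution on a solid torus admits one up to isotopy. Then $K_i \subset \threesphere$ is a knot carrying a free period $2$ given by $f$, so by \cite[Lemma~1.2(3)]{S1} the slope of $f$ on $T_i = \partial N(K_i)$ is $\ell_{K_i}+m_{K_i}$. Under the satellite gluing identifications $\ell_{K_i} = m_{O_i}$ and $m_{K_i} = \ell_{O_i}$, this is precisely the screw slope $m_{O_i}+\ell_{O_i}$, and therefore $\bar f|_{V_i}$ is a free involution. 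The identical reasoning applied to $K = K_0$, which has free period $2$ by hypothesis, shows $\bar f$ is free on $N(K_0)$.

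Putting everything together, $\bar f$ has no fixed points on any filling and is already free on $E_0$, hence is a free involution of $\threesphere$. The main obstacle is the slope identification: one must carefully arrange, via an equivariant isotopy supported inside the complementary solid torus, that $f$ preserves $K_i$ setwise without disturbing the JSJ structure on $E_0$, and then correctly apply Sakuma's Lemma~1.2(3) of~\cite{S1} to the (possibly non-trivial) companion knot $K_i$.
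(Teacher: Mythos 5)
Your proof is correct and follows essentially the same route as the paper's: localise any fixed points of $\bar f$ to the filling solid tori and exclude them by computing the slope of the free involution on each boundary torus via \cite[Lemma 1.2(3)]{S1}. The only cosmetic difference is that you compute the screw slope $\ell+m$ positively (via an $f$-invariant core of the companion solid torus), whereas the paper derives a contradiction from the slope being the longitude of $E(K_i)$; the content is identical.
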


\begin{proof}
By construction, 
$\bar f$ is an involution which acts freely on $E_0$. 
If $\bar f$ has a fixed point,
it must occur inside a regular neighbourhood of some component $O_i$. Moreover,
the fixed-point set must coincide with an $O_i$ which is $\bar f$-invariant. It
follows that the slope of $\bar f$ along $O_i$ is the meridian. 
Since the
meridian of $O_i$ coincides with the longitude of $E_i=E(K_i)$,
the action of $f$ on $E(K_i)$ cannot be free by \cite[Lemma 1.2(3)]{S1},
against the assumption.
\end{proof}

\begin{Lemma}
\label{lem:order-2}
We can choose the diffeomorphism $\gamma$ of $(\threesphere,K)$ giving
amphicheirality of $K$ so that the corresponding isometry 
$\check\gamma$ of $E_0$ satisfies the condition that the subgroup 
$\langle \check\gamma, \check f\rangle$ of $\Isom(E_0)$ is isomorphic to 
$(\Z/2\Z)^2$. Moreover, the new $\gamma$ preserves or reverses the orientation 
of $K$ according to whether the original $\gamma$ preserves or reverses the 
orientation of $K$.
\end{Lemma}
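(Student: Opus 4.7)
The plan is first to show that $\check f$ is automatically central in $\Isom^*(E_0)$, so the commutativity of $\check\gamma$ and $\check f$ comes for free, and then to modify $\check\gamma$ within its coset modulo $\Isom^*_+(E_0)$ to make it an involution while preserving the amphicheirality type.

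By Lemma~\ref{lem:cyclic}(2), $N_+:=\Isom^*_+(E_0)$ is cyclic, say $\langle\rho\rangle$ of order $n$. Since $\check f$ is an orientation-preserving involution it lies in $N_+$, and as a cyclic group has at most one element of order $2$, we have $n$ even and $\check f=\rho^{n/2}$. Every automorphism of a cyclic group fixes the unique element of order $2$, so $\check f$ is central in $N:=\Isom^*(E_0)$; in particular $\check\gamma\check f=\check f\check\gamma$ regardless of the initial choice of $\check\gamma$.

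Two diffeomorphisms realising the same $\epsilon$-amphicheirality of $K$ differ by an orientation-preserving self-diffeomorphism of $(\threesphere, K)$ preserving the orientation of $K$. After isotopy to preserve $E_0$, this corresponds to multiplying $\check\gamma$ by some $\alpha\in N_+$, and such a change does not alter the amphicheirality type. It remains to choose $\alpha$ so that $\check\gamma\alpha$ has order $2$. The negative-amphicheiral case is immediate: $\bar\gamma$ reverses the orientation of $K_0$, so $\bar\gamma|_{K_0}$ is a finite-order orientation-reversing self-diffeomorphism of the circle, hence an involution. Thus $\check\gamma^2$ acts trivially on $K_0$, and by the effective action of $N_+$ on $K_0$ (Remark~\ref{r:effective}), $\check\gamma^2=\mathrm{id}$.

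For the positive-amphicheiral case, $\bar\gamma|_{K_0}$ is an orientation-preserving rotation; let $\check\gamma$ have order $2m$ in $N$. If $m$ is odd, $\check\gamma^m$ is an orientation-reversing involution that still realises the positive amphicheirality (since $\epsilon^m=\epsilon$ for $m$ odd) and we are done. If $m$ is even, $\check\gamma^m\in N_+$ is an orientation-preserving involution, hence equal to $\check f$; we then seek $b$ such that $(\check\gamma\rho^b)^2=1$. Writing $\check\gamma^2=\rho^a$ and $\check\gamma\rho\check\gamma^{-1}=\rho^k$ with $k^2\equiv 1\pmod n$, this reduces to the congruence $a+b(k+1)\equiv 0\pmod n$. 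The main obstacle is establishing solvability of this congruence in the $m$-even subcase; my plan is to exploit the effective action of $N_+$ on $K_0$ to relate $k$ and $a$ modulo $n$ to the rotation numbers of $\check\gamma$ and $\rho$ on the circle $K_0$, from which one deduces that $\gcd(k+1,n)$ always divides $a$.
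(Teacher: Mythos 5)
Your centrality argument for $\check f$ (unique involution in the cyclic normal subgroup $N_+=\Isom^*_+(E_0)$, hence characteristic, hence central) is exactly the paper's, and your treatment of the negative-amphicheiral case and of the positive case with $m$ odd is correct — indeed, using Remark~\ref{r:effective} to force $\check\gamma^2=\mathrm{id}$ from $\bar\gamma^2|_{K_0}=\mathrm{id}$ is a clean shortcut in the negative case. The problem is the positive-amphicheiral, $m$-even subcase, where you leave an acknowledged gap, and the route you sketch for closing it will not work. In that subcase you have correctly identified that $\check f=\check\gamma^m\in\langle\check\gamma\rangle$; from there your plan is to solve $a+b(k+1)\equiv 0\pmod n$. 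But since $\bar\gamma|_{K_0}$ and $\rho|_{K_0}$ are commuting finite-order rotations, the rotation-number bookkeeping only gives $k\equiv 1\pmod n$, reducing the congruence to $a+2b\equiv 0\pmod n$ with $n$ even; nothing in the circle action forces $a$ to be even ($N$ need not act effectively on $K_0$ — only $N_+$ does). Group-theoretically the congruence can genuinely fail: e.g.\ $N\cong\Z/4\Z$ with $\check f=\check\gamma^2$ has no involution in the coset $\check\gamma N_+$. So no amount of algebra on $K_0$ will produce the desired involution in that coset.

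The missing idea — and the heart of the paper's proof — is topological: the subcase $\check f\in\langle\check\gamma\rangle$ is vacuous. The extension $\bar\gamma$ is a finite-order orientation-reversing diffeomorphism of $\threesphere$, so its Lefschetz number is $1-\deg(\bar\gamma)=2\neq 0$ and $\Fix(\bar\gamma)\neq\emptyset$; every power of $\bar\gamma$ then has a fixed point, so $\bar f=\bar\gamma^m$ cannot be the free involution. Hence $\check\gamma^m\neq\check f$, and since $\check\gamma^m$ is the unique involution of the cyclic group $\langle\check\gamma^2\rangle\cdot\langle\check\gamma\rangle\cap N_+$-type analysis — more simply, since $\check\gamma^{2}$ generates a nontrivial cyclic subgroup of $N_+$ which must contain the unique involution $\check f$ of $N_+$ — the $m$-even case is impossible outright. (The paper phrases this by first passing to an odd power of $\check\gamma$ so its order is a power of $2$, then observing that order $>2$ would force $\check f\in\langle\check\gamma\rangle$ and contradict freeness of $\bar f$.) You should replace the congruence argument by this fixed-point contradiction.
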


\begin{proof}
Since $\Isom(E_0)$ is the isometry group of a hyperbolic manifold with finite
volume, it is a finite group. The element $\check\gamma$ reverses the 
orientation of the manifold, so it must have even order and, up to taking an 
odd power, we can assume that its order is a power of $2$. The cyclic subgroup
$\Isom^*_+(E_0)$ of $\Isom^*(E_0)$ is clearly normal. It contains the subgroup
generated by $\check f$, which is its only subgroup of order $2$. It follows 
that the subgroup generated by $\check f$ is normalised by $\check \gamma$, and 
hence $\check f$ and $\check \gamma$ commute. If the order of $\check\gamma$ is 
$2$ we are done. Otherwise, $\check \gamma^2$ belongs to 
the cyclic group
$\Isom^*_+(E_0)$, and $\check f$ is a power of $\check \gamma$. 
Note that the periodic map $\bar\gamma$ of 
$(\threesphere,L)$
obtained as
the extension of $\check\gamma$ reverses the orientation of $\threesphere$
and hence it has a nonempty fixed point set.
Thus $\bar\gamma^2$ also has a nonempty fixed point set, and so does $\bar f$,
a contradiction. Hence $\langle \check\gamma, \check f\rangle\cong (\Z/2\Z)^2$.
The last assertion is obvious from the construction.
\end{proof}

Consider the subgroup 
$\langle \check\gamma, \check f\rangle$
of $\Isom(E_0)$ in the above lemma,
and let $\langle \bar f, \bar\gamma\rangle$ be 
its extension to a group action on $(\threesphere, L)$.
The main result of \cite{DL} implies that
$\langle \bar f, \bar\gamma\rangle$ is conjugate,
as a subgroup of $\Diff(S^3)$, 
to a subgroup 
$\bar\G$ of the orthogonal group 
$\mathrm{O}(4) \cong\mathrm{Isom}(\threesphere)$.
By using the facts that $\bar f$ is a free involution
and that $\bar \gamma$ descends to an orientation-reversing involution on 
$S^3/\bar f$, we may assume $\bar\G$ is equal to the group 
$\G=\langle \gamma_1,\gamma_2\rangle$ (cf. \cite{K}).
Thus we may assume $L=K_0\cup\OO_{\mu}$ is $\G$-invariant,
and it provides an admissible root in the sense of Definition~\ref{d:root}.
The remaining assertions of Theorem \ref{prop:structure} follow from
the arguments in Section~\ref{section:knot}. 


\section{Proofs of Theorems~\ref{thm1}, \ref{thm2}, and \ref{thm:minimal}}
\label{section:thm2}


Let $K$ be a prime amphicheiral knot with free period $2$.
Then by Theorem \ref{prop:structure}, 
there is a link $L=K_0\cup \OO_{\mu}$ 
which provides an admissible root and
non-trivial knots $K_i$ ($i=1,\cdots, \mu$) such that
\[
(\threesphere,K)=(E(\OO_{\mu}), K_0)\cup ( \cup_{i=1}^{\mu} (E(K_i), \emptyset)).
\]
In particular, $L$ is $\G$-invariant and the root $E_0$ of the JSJ decomposition of $E(K)$
is identified with $E(L)=E(K_0\cup\OO_{\mu})$ where $\mu=\mu(K)$.

We will start with the proof of Theorem~\ref{thm2}: each point of the theorem
will be proved as a claim in this section.

\begin{Claim}
\label{c:inv-comp}
$L$ contains at most two components whose stabiliser is the whole group
$\G$. These components are either a great circle in the $2$-sphere
$\Fix(\gamma_1)$, or a knot, either trivial or composite, that meets
$\Fix(\gamma_1)$ and contains the two fixed points of $\gamma_2$.
\end{Claim}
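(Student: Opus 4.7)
The plan is to analyse, for any component $C$ of $L$ whose $\G$-stabiliser is all of $\G$, how the involutions $\gamma_1$ and $f=\gamma_1\gamma_2$ act on the circle $C$, classify $C$ into two topological types, and then count the components of each type.

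First, I would study $\gamma_1|_C$: as an involution of a circle, it is one of the identity, a free involution, or a reflection with exactly two fixed points. The middle possibility is ruled out, for if $\gamma_1$ acted freely on $C$ then $C$ would miss $\Fix(\gamma_1)$, and since $\Fix(\gamma_1)$ is a separating $2$-sphere whose two complementary balls are exchanged by $\gamma_1$, the component $C$ would lie in one ball while $\gamma_1(C)$ would lie in the other, contradicting $\gamma_1(C)=C$. Hence either $C\subset\Fix(\gamma_1)$, or $C$ meets $\Fix(\gamma_1)$ transversely in exactly two points.

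In the first case, $C$ is a simple closed curve on a $2$-sphere, and $\gamma_2|_{\Fix(\gamma_1)}$ is a free involution of $S^2$ (since $\Fix(\gamma_1)\cap\Fix(\gamma_2)=\emptyset$), topologically the orientation-reversing antipodal map; the $\gamma_2$-invariance of $C$ then forces $C$ to separate $\Fix(\gamma_1)$ into two discs swapped by $\gamma_2$ (the alternative, that each disc be preserved, would yield a free orientation-reversing involution of a disc, which is impossible by an Euler characteristic argument). In the second case, $f|_C$ is a free involution of $C$ (since $f$ is globally free) while $\gamma_1|_C$ is a reflection, so writing $\gamma_2=\gamma_1 f$, the restriction $\gamma_2|_C$ is the composition of a reflection of $S^1$ with a free involution of $S^1$, itself a reflection with exactly two fixed points; these fixed points lie in the two-point set $\Fix(\gamma_2)$, so $C$ passes through both points of $\Fix(\gamma_2)$. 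Now two disjoint antipodally invariant simple closed curves in $\Fix(\gamma_1)$ cannot coexist: an outer one cobounds two discs swapped by $\gamma_2$ and an inner one would lie in a single disc, violating its $\gamma_2$-invariance; hence at most one component of $L$ is of the first type. At most one component of the second type occurs as well, since distinct components of $L$ are disjoint while any component of the second type must contain both points of $\Fix(\gamma_2)$. Together this yields at most two $\G$-stabilised components of $L$.

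The main remaining obstacle is the assertion that a second-type component, viewed as a knot, is trivial or composite (never prime non-trivial). For such a $C$, which is amphicheiral (via $\gamma_1$) and has free period $2$ (via $f$), one combines the second result of \cite{S2} (excluding hyperbolicity of $E(C)$), the non-amphicheirality of torus knots, and the cable-knot lemma proved above; any remaining prime satellite structure would yield a JSJ-essential torus of $E(C)$ that persists as an essential torus in the hyperbolic manifold $E(L)$, a contradiction.
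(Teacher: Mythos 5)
Your first two paragraphs are sound and essentially reproduce the paper's argument: a $\G$-invariant component is either contained in $\Fix(\gamma_1)$ (and is then an antipodally invariant circle, unique since two disjoint such circles cannot coexist on the $2$-sphere), or meets $\Fix(\gamma_1)$ transversely in two points with each residual arc forced to contain a point of $\Fix(\gamma_2)$ (unique since $\Fix(\gamma_2)$ has only two points). The count of at most two is correct.

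The last paragraph, however, contains a genuine gap. The claim that a JSJ-essential torus of $E(C)$ ``persists as an essential torus in the hyperbolic manifold $E(L)$'' is false: an essential torus in the exterior of one component need not be isotopic off the other components of $L$, and when it is not, it can be punctured by them and cease to be essential in $E(L)$. The paper's own link $L_2$ is a counterexample to the persistence principle you invoke: its component $K_0$ is a composite (hence satellite) knot, so $E(K_0)$ contains essential tori, yet $E(L_2)$ is hyperbolic. If your persistence argument were valid it would exclude composite $\G$-invariant components as well, contradicting the existence of $L_2$. More fundamentally, the strategy of classifying $C$ abstractly as an amphicheiral knot with free period $2$ cannot yield ``trivial or composite'': the main theorem of the paper is precisely that \emph{prime} non-trivial such knots exist. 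What actually forces $C$ to be non-prime is the sphere $\Fix(\gamma_1)$ itself, and you have already set this up: $\Fix(\gamma_1)$ meets $C$ in two points, and the two complementary arcs are exchanged by the orientation-reversing reflection $\gamma_1$. Hence the sphere exhibits $C$ as the connected sum of a knot with its mirror image; by uniqueness of prime decomposition such a sum is the trivial knot (if the summand is trivial) or composite (otherwise), and is never a non-trivial prime knot. Replacing your third paragraph by this one-line observation completes the proof along the paper's lines.
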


Here by a \emph{great circle} we mean an embedded circle in the $2$-sphere
$\Fix(\gamma_1)$ which is invariant by the antipodal map induced by $f$ on
$\Fix(\gamma_1)$.

\begin{proof}
Let $L_i$ be a component which is left invariant by $\G$. If $L_i$ is contained
in $\Fix(\gamma_1)$ then it must be a trivial knot which is a great circle of
$\Fix(\gamma_1)$. 
Since the $2$-sphere $\Fix(\gamma_1)$ cannot contain two mutually disjoint great circles,
no other component contained in $\Fix(\gamma_1)$ can
have $\G$ as stabiliser. Assume now that $L_i$ is not contained in
$\Fix(\gamma_1)$. Since its stabiliser is $\G$, $L_i$ cannot be contained in one
of the two balls of $\threesphere\setminus \Fix(\gamma_1)$. As a consequence, it
intersects $\Fix(\gamma_1)$ transversally in two antipodal points, and each of
the two balls of $\threesphere\setminus \Fix(\gamma_1)$ in an arc. Since each of
these arcs is left invariant by $\gamma_2$, it must contain one of the two
fixed points of $\gamma_2$. Of course, at most one component of $L$ can contain
$\Fix(\gamma_2)$. 
Since the two arcs are exchanged by $\gamma_1$, it follows that they are both 
unknotted, in which case $L_i$ is trivial, or both knotted, in which case $L_i$ 
is composite.
\end{proof}

\begin{Claim}
\label{c:atleast2}
$L$ contains at least one pair of components of $\OO_{\mu}$, 
such that (i) each of the components intersects the $2$-sphere $\Fix(\gamma_1)$ 
transversely in two points, 
(ii) the 
stabiliser of each of the components is generated by $\gamma_1$,
and (iii) $f$ interchanges the two components. 
\end{Claim}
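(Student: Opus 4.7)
The plan is to proceed by contradiction, supposing no pair of components of $\OO_\mu$ satisfies properties (i)--(iii), and derive a contradiction with hyperbolicity of $L$ via a Gauss--Bonnet analysis of the totally geodesic surface $F:=\Fix(\gamma_1)\cap E(L)$.

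First, I would classify the possible stabilisers of components of $\OO_\mu$. By Lemma~\ref{lem:no-f-stab} they lie in $\{1,\langle\gamma_1\rangle,\langle\gamma_2\rangle,\G\}$, and by Claim~\ref{c:inv-comp} at most one $\OO_\mu$-component has stabiliser $\G$. A component $O$ with trivial or $\langle\gamma_2\rangle$ stabiliser must be disjoint from $\Fix(\gamma_1)$: otherwise the distinct image $\gamma_1(O)$ would share the intersection points with $O$, violating disjointness of link components. Moreover, if the exceptional $\G$-stabilised component of $\OO_\mu$ exists and is transverse to $\Fix(\gamma_1)$, it must contain $\Fix(\gamma_2)$ by the analysis in Claim~\ref{c:inv-comp}; this is forbidden in Case~B ($K_0\not\subset\Fix(\gamma_1)$) since $K_0$ itself already contains $\Fix(\gamma_2)$ and the two components are disjoint. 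Under the contradictory hypothesis, any $\langle\gamma_1\rangle$-stabilised $\OO_\mu$-component must therefore lie in $\Fix(\gamma_1)$ as a great circle.

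Next, after invoking Mostow rigidity as in Lemma~\ref{lem:cyclic}, I arrange $\gamma_1$ to act by isometries on the hyperbolic manifold $E(L)$, so that $F$ becomes a properly embedded totally geodesic $2$-submanifold. Each component of $F$ is a finite-area hyperbolic surface, and by Gauss--Bonnet has $\chi<0$; since $F\subset\Fix(\gamma_1)\cong S^2$ is planar, each component must carry at least three boundary circles. Write $a$ for the number of components of $L$ contained in $\Fix(\gamma_1)$ and $t$ for the total number of transverse intersections of $L$ with $\Fix(\gamma_1)$. A direct count shows that $F$ has $a+1$ connected components and a total of $2a+t$ boundary circles, so summing the inequality ``each component has at least three boundary circles'' yields $2a+t\geq 3(a+1)$, i.e.\ $t\geq a+3$.

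It remains to show that this inequality fails in every configuration compatible with the standing hypothesis. In Case~A ($K_0\subset\Fix(\gamma_1)$) one has $a\geq 1$ and $t\leq 2$, since only a possible $\G$-stabilised transverse $\OO_\mu$-component could contribute to $t$, so $t\geq a+3\geq 4$ is violated. In Case~B one has $a\geq 0$ and $t=2$ exactly (only $K_0$ contributes, the $\G$-transverse case being excluded by the first paragraph), so $t\geq a+3\geq 3$ is again violated. Either way we obtain the desired contradiction, so some $\langle\gamma_1\rangle$-stabilised transverse component $O$ of $\OO_\mu$ must exist; then $f(O)\neq O$ since $f\notin\Stab(O)$, the image $f(O)$ inherits the same stabiliser and transversality, and $\{O,f(O)\}$ is the pair required by the claim. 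The main obstacle is justifying that every component of $F$ has $\chi<0$ (thereby ruling out compressing discs and boundary-parallel annuli); this rests on having $\gamma_1$ act as an honest isometry of the hyperbolic structure, a point that requires the Mostow-rigidity step.
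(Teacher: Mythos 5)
Your proof is correct and takes essentially the same route as the paper's: make $\gamma_1$ act isometrically on the hyperbolic complement so that $\Fix(\gamma_1)\setminus L$ is totally geodesic and hence has no sphere, disc, or annulus components, then combine an Euler-characteristic count of its punctures and boundary circles with the stabiliser classification to force a transverse $\langle\gamma_1\rangle$-stabilised component and its $f$-image. (One cosmetic slip that does not affect your count of $a$ and $t$: a component of $\OO_{\mu}$ lying in $\Fix(\gamma_1)$ with stabiliser exactly $\langle\gamma_1\rangle$ is a \emph{non}-great circle, since a great circle is $f$-invariant and therefore $\G$-stabilised.)
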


\begin{proof}
Since $L$ is hyperbolic, we can assume that $\gamma_1$ acts as a hyperbolic
isometry on its complement. It follows that each component of
$\Fix(\gamma_1)\setminus L$ is a totally geodesic surface in 
the hyperbolic manifold $\threesphere -L$. 
Thus no component of $\Fix(\gamma_1)\setminus L$ is
a sphere, a disc, or an annulus.
On the other hand, if a component of $L$ is not disjoint from the $2$-sphere $\Fix(\gamma_1)$, 
then it is either contained in $\Fix(\gamma_1)$ or
intersects $\Fix(\gamma_1)$ transversely in two points.
(In fact, if a component of $L$ intersects transversely $\Fix(\gamma_1)$ at a point,
then it is $\gamma_1$-invariant and so it intersects 
$\Fix(\gamma_1)$ transversely in precisely two points.)
Now the claim follows from the fact that either (i) $\Fix(\gamma_1)$ contains 
no component of $L$, so that $\Fix(\gamma_1)\setminus L$ is a punctured sphere 
with at least three (actually four) punctures, or (ii) contains some components
of $L$ in which case at least two components of $\Fix(\gamma_1)\setminus L$ are
punctured discs with at least two punctures.
\end{proof}

\begin{Claim}
\label{c:No-f}
$L$ contains no component with stabiliser generated by $f$.
\end{Claim}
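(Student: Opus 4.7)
The plan is to observe that this claim is essentially a restatement, in the current setting, of Lemma~\ref{lem:no-f-stab}, which has already been established without any hyperbolicity assumption. Since $L=K_0\cup\OO_{\mu}$ comes from Theorem~\ref{prop:structure} and is $\G$-invariant, that lemma applies directly and the claim follows.

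Nevertheless, it is instructive to recall the one-line topological reason, which will mesh naturally with Claims~\ref{c:inv-comp} and \ref{c:atleast2}. Suppose for contradiction that some component $L_j$ of $L$ has stabiliser precisely $\langle f\rangle$. The free involution $f=\gamma_1\gamma_2$ interchanges the two open $3$-balls of $\threesphere\setminus\Fix(\gamma_1)$, so no $f$-invariant submanifold can be contained in either of them; hence $L_j\cap\Fix(\gamma_1)\neq\emptyset$. Since $L_j$ is a smooth circle and $\Fix(\gamma_1)$ is a $2$-sphere, $L_j$ is either entirely contained in $\Fix(\gamma_1)$, or meets it in finitely many transverse points, in which case $L_j$ must also be $\gamma_1$-invariant (its image under $\gamma_1$ is a component of $L$ that shares at least one point with $L_j$, so it equals $L_j$). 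Either way, $\gamma_1$ lies in the stabiliser of $L_j$, so the stabiliser contains $\langle\gamma_1,f\rangle=\G$, contradicting the assumption.

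The main (and in fact only) obstacle is to make sure that the dichotomy ``$L_j\subset\Fix(\gamma_1)$ or $L_j$ is $\gamma_1$-invariant'' is justified; this requires nothing beyond the fact that $\gamma_1$ is an isometric involution whose fixed set is a $2$-sphere, together with the usual transversality observation that a one-dimensional submanifold meeting a codimension-$1$ fixed set either lies in it or is mapped to another component by the involution unless it is itself invariant. No further work is needed, and in particular no use of hyperbolicity of $L$ is required for this claim (in contrast to Claim~\ref{c:atleast2}).
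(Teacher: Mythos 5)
Your proposal is correct and follows exactly the paper's route: the paper proves this claim by simply citing Lemma~\ref{lem:no-f-stab}, whose proof is the same one-line argument you reproduce (any $f$-invariant component must meet $\Fix(\gamma_1)$, hence is $\gamma_1$-invariant, so its stabiliser is all of $\G$). Your extra care about the dichotomy and the non-use of hyperbolicity matches the paper's remark following that lemma.
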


This was proved in Lemma~\ref{lem:no-f-stab}.

\begin{Claim}
\label{c:K0}
\rm{(1)}
If $K$ is positive-amphicheiral, then $K_0$ must be contained in 
$\Fix(\gamma_1)$.

\rm{(2)}
If $K$ is negative-amphicheiral, then $K_0$ must contain $\Fix(\gamma_2)$ and 
intersect transversally $\Fix(\gamma_1)$ in two points.
\end{Claim}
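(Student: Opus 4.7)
\medskip
\textbf{Plan.} By Claim~\ref{c:inv-comp}, since $K_0$ is stabilised by the whole group $\G$, there are exactly two possibilities: either (a) $K_0$ is a great circle in $\Fix(\gamma_1)$, or (b) $K_0$ meets $\Fix(\gamma_1)$ transversely in two antipodal points and contains both fixed points of $\gamma_2$. My plan is to determine in each of these two configurations how $\gamma_1$ and $\gamma_2$ act on the orientation of $K_0$, and then invoke Theorem~\ref{prop:structure} to convert this into a statement about the type of amphicheirality of $K$.

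Since $K_0$ lies in $E(L)\subset\threesphere$ and coincides, as a subset of the ambient $3$-sphere, with the knot $K$, each extension $\hat\gamma_j$ of $\gamma_j$ produced in Section~\ref{section:knot} acts on $K$ in exactly the same way as $\gamma_j$ acts on $K_0$; in particular, $\hat\gamma_j$ preserves (resp.\ reverses) the orientation of $K$ if and only if $\gamma_j$ preserves (resp.\ reverses) the orientation of $K_0$. In case (a), $\gamma_1$ fixes $K_0$ pointwise and so preserves its orientation, while $\gamma_2|_{K_0}=f|_{K_0}$ is the antipodal map of the great circle, i.e.\ a rotation by $\pi$ of $\onesphere$, which also preserves orientation. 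In case (b), $\gamma_1$ exchanges the two arcs of $K_0$ lying on opposite sides of $\Fix(\gamma_1)$, reversing the orientation of $K_0$, and $\gamma_2$ restricts to an involution of the circle $K_0$ with exactly two fixed points, which therefore reverses the orientation of $K_0$. Consequently, in case (a) both $\hat\gamma_1$ and $\hat\gamma_2$ realise positive-amphicheirality of $K$, whereas in case (b) both realise negative-amphicheirality.

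By Theorem~\ref{prop:structure}, if $K$ is $\epsilon$-amphicheiral then some extension of $\gamma_1$ or $\gamma_2$ must realise the $\epsilon$-amphicheirality. Combining this with the previous paragraph, positive-amphicheirality of $K$ forces case (a), i.e.\ $K_0\subset\Fix(\gamma_1)$, and negative-amphicheirality forces case (b), i.e.\ $K_0\supset\Fix(\gamma_2)$ and $K_0$ meets $\Fix(\gamma_1)$ transversely in two points. The main obstacle is not a technical one but the careful orientation bookkeeping in the two configurations; the slightly subtle points are that the antipodal map on a great circle preserves its orientation (so $\gamma_2$ is orientation-preserving on $K_0$ in case (a)) and that an involution of $\onesphere$ with two fixed points must be orientation-reversing (so $\gamma_2$ is orientation-reversing on $K_0$ in case (b)).
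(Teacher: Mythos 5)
Your proposal is correct and follows essentially the same route as the paper: reduce to the two configurations of Claim~\ref{c:inv-comp}, determine whether $\gamma_1$ and $\gamma_2$ preserve or reverse the orientation of $K_0$ in each, and conclude via Theorem~\ref{prop:structure}. The only cosmetic difference is that the paper uses freeness of $f$ to see that $\gamma_1$ and $\gamma_2$ act identically on $H_1(K_0;\Z)$ and then computes only the action of $\gamma_1$, whereas you verify both actions directly; the bookkeeping is equivalent.
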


\begin{proof}
By assumption, $K_0$ is $\G$-invariant, so it must be as described in
Claim~\ref{c:inv-comp}. 
Consider the action induced by $\G$ on $H_1(K_0;\Z)$.
Since $f\in \G$ is 
a free involution of $\threesphere$,
the action of $f$ on $H_1(K_0;\Z)$ is trivial, 
so the actions of $\gamma_1$ and $\gamma_2$ on it coincide.
It is now easy to see that 
(i) if $K_0$ is contained in $\Fix(\gamma_1)$ then $\gamma_1$ acts trivially on $H_1(K_0;\Z)$,
and 
(ii) if $K_0$ meets $\Fix(\gamma_1)$ transversally
then $\gamma_1$ acts on $H_1(K_0;\Z)$ as multiplication by $-1$. 
Since $\epsilon$-amphicheirality of $K$ is realised by an extension of 
$\gamma_1$ or $\gamma_2$ by Theorem \ref{prop:structure}, we obtain the desired 
result.
\end{proof}

This ends the proof of Theorem~\ref{thm2}.

\medskip

We now explain Remark \ref{rem:invertible}. Suppose that the prime knot $K$ 
with free period $2$ is both positive- and negative-amphicheiral. Since 
$\Isom^*_+(E_0)$ is a finite cyclic group by Lemma~\ref{lem:cyclic}(2), there 
is a unique element $f \in \Isom^*(E_0)$ which extends to a smooth involution 
of $(S^3,K)$ realising the free period $2$. For $\epsilon\in\{+,-\}$, let 
$\gamma_{\epsilon}$ be the order $2$ element of $\Isom^*(E_0)$ which realises 
the $\epsilon$-amphicheirality of $K$, such that 
$\langle f, \gamma_{\epsilon}\rangle\cong (\Z/2\Z)^2$ 
(cf. Lemma \ref{lem:order-2}). Now recall that the finite group $\Isom^*(E_0)$ 
extends to an action of $(\threesphere, L)$ by Lemma~\ref{lem:cyclic}, and so 
we identify it with a finite 
subgroup of $\Diff(\threesphere, L)$. Then 
$\gamma_+\gamma_-$ preserves the orientation of $\threesphere$ and reverses the 
orientation of the component $K_0$, and so it realises the invertibility of 
$K_0$. Since $(\gamma_+\gamma_-)^2$ acts on $K_0$ as the identity map,
the periodic map $(\gamma_+\gamma_-)^2$ must be the identity map according to 
Remark~\ref{r:effective}. Thus $(\gamma_+\gamma_-)^2=1$ in $\Isom^*(E_0)$.
Hence we see $\langle f, \gamma_+, \gamma_-\rangle \cong (\Z/2\Z)^3$.
The main result of \cite{DL} guarantees that, as a subgroup of 
$\Isom(\threesphere)$, this group is smoothly conjugate to a subgroup of 
$\Isom(\threesphere)$. Remark \ref{rem:invertible} now follows from this fact.

\medskip

We need the following lemma in the proof of Theorem~\ref{thm:minimal}.

\begin{Lemma}
\label{p:specialcase}
Let $L=K_0\cup\OO_2$ be a three component link providing an admissible root. 
Then $K_0$ cannot be contained in the $2$-sphere $\Fix(\gamma_1)$.
\end{Lemma}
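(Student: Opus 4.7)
The plan is to argue by contradiction. Suppose $K_0\subset\Sigma:=\Fix(\gamma_1)$. By Claim~\ref{c:inv-comp}, $K_0$ must be a great circle of the $2$-sphere $\Sigma$, dividing it into two open hemispheres $D_+,D_-$ exchanged by the antipodal action $f|_\Sigma$. By Claims~\ref{c:atleast2} and~\ref{c:No-f}, both components $O_1,O_2$ of $\OO_2$ have stabiliser $\langle\gamma_1\rangle$, satisfy $f(O_1)=O_2$, and intersect $\Sigma$ transversally in two points each. Setting $O_1\cap\Sigma=\{p_1,p_2\}$, I split according to whether $p_1,p_2$ lie in the same hemisphere or in opposite hemispheres, and in each case exhibit a geometric obstruction (a split sphere or an essential annulus) to the hyperbolicity of $E(L)$.

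In the \emph{same-hemisphere} case, say $p_1,p_2\in D_+$. The hemisphere $D_-$ is a disc bounded by $K_0$ disjoint from $O_1$, so $\mathrm{lk}(K_0,O_1)=0$, and symmetrically $\mathrm{lk}(K_0,O_2)=0$. Starting from disjoint embedded spanning discs $E_1,E_2$ for $\OO_2$ in $S^3$ (which exist because $\OO_2$ is a trivial link), I plan to cancel pairs of oppositely-signed intersections of each $E_i$ with $K_0$ by isotopies supported in small neighbourhoods of arcs of $K_0$, keeping the $E_i$ mutually disjoint throughout. The resulting disjoint discs $E_1,E_2$ are both disjoint from $K_0$, and a regular neighbourhood $N(E_1)$ is a $3$-ball containing $O_1$ disjoint from $K_0\cup O_2$, whose boundary sphere splits $L$. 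This contradicts the irreducibility of the hyperbolic complement $E(L)$.

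In the \emph{opposite-hemisphere} case, the disc $D_+$ meets $O_1$ transversally in exactly one point, so $|\mathrm{lk}(K_0,O_1)|=1$ and similarly $|\mathrm{lk}(K_0,O_2)|=1$. Each $O_i$ being unknotted in $S^3$ and linking $K_0$ once, a standard argument (minimising a spanning disc of $O_i$'s transverse intersections with $K_0$ down to a single essential one) shows that $O_i$ is isotopic in the solid torus $V:=S^3\setminus\mathrm{int}\,N(K_0)$ to the core of $V$. After isotoping $O_1$ to be the core, the curve $O_2$ lies in $V\setminus O_1\cong T^2\times(0,1]$ and represents a primitive class of the form $(m,\pm1)$ in the basis (meridian of $V$, longitude of $V$) of $H_1(T^2)$; the triviality of $\OO_2$ gives $\mathrm{lk}(O_1,O_2)=m=0$, forcing $O_2$ to be a longitude parallel to $O_1$. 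Hence $O_1$ and $O_2$ cobound an embedded annulus $A\subset V$, which is properly embedded in $E(L)$ with boundary on two distinct boundary tori. Its core is a generator of $\pi_1(V)$ so $A$ is incompressible, and having boundary circles on different boundary tori it cannot be boundary-parallel. This essential annulus contradicts the anannularity of hyperbolic link complements.

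The main obstacle I expect lies in the opposite-hemisphere case: rigorously translating the homological data ($|\mathrm{lk}(K_0,O_i)|=1$ and $\mathrm{lk}(O_1,O_2)=0$) together with the unknottedness of $O_i$ into the rigid geometric conclusion that $O_1,O_2$ are parallel longitudes of $V$. The crux is identifying the meridional coordinate $m$ of $O_2$ in $V\setminus O_1$ with the linking number $\mathrm{lk}(O_1,O_2)$, and ruling out twisted configurations with $m\neq 0$ that could be consistent with $O_2$ being an unknot in $S^3$ (only $m\in\{-1,0,1\}$ give an unknot, as $(m,\pm1)$-curves are torus knots) but are incompatible with the triviality of $\OO_2$.
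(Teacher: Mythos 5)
Your proof has genuine gaps in both of its cases, and they stem from the same source: you try to pin down the isotopy type of $L$ using only homological data (linking numbers) together with the unknottedness of the components, and this is not enough. In the same-hemisphere case, the step ``cancel pairs of oppositely-signed intersections of each $E_i$ with $K_0$ by isotopies supported in small neighbourhoods of arcs of $K_0$'' is not a valid operation: an isotopy of a spanning disc cannot remove its intersections with $K_0$, and the standard tube-along-an-arc-of-$K_0$ modification destroys the disc (it raises the genus). More fundamentally, $\mathrm{lk}(K_0,O_i)=0$ does not imply that $O_i$ bounds a disc disjoint from $K_0$ -- the Whitehead link is the classical counterexample -- so no argument based only on vanishing linking numbers can produce the splitting sphere you want. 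In the opposite-hemisphere case, the assertion that an unknot $O_1$ with $|\mathrm{lk}(K_0,O_1)|=1$ can have its spanning disc reduced to a single transverse intersection with $K_0$, and is therefore isotopic in $V=\threesphere\setminus \interior N(K_0)$ to the core of $V$, is false: winding-number-one patterns in a solid torus (e.g.\ the Mazur pattern) give unknots linking $K_0$ once that are not isotopic to the core, and $K_0\cup O_1$ can then be a hyperbolic link. The subsequent claim that $O_2\subset V\setminus O_1\cong T^2\times(0,1]$ ``represents a primitive class of the form $(m,\pm1)$'' tacitly assumes $O_2$ is isotopic to a curve on a torus level, which is again unjustified. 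You correctly flagged the second difficulty as the crux, but it cannot be repaired by homological bookkeeping, and the first case has the same defect.

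The paper's proof avoids all of this by exploiting the position of $\OO_2$ relative to $S=\Fix(\gamma_1)$ more structurally. Since each component of $\OO_2$ meets $S$ transversely in two points (Claim~\ref{c:atleast2}), $S$ is a $2$-bridge sphere for the $2$-component trivial link $\OO_2$; by the uniqueness of $2$-bridge spheres it is the standard bridge sphere of the $2$-bridge link of slope $1/0$. Then $K_0$ is an essential simple closed curve on the $4$-punctured sphere $S\setminus\OO_2$, hence determined up to isotopy by its slope $s\in\Q\cup\{1/0\}$, and the $\gamma_2$-invariance of $K_0$ forces $s=-s$, i.e.\ $s\in\{0,1/0\}$. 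These two slopes yield respectively the connected sum of two Hopf links and the $3$-component trivial link, neither of which is hyperbolic. If you want to keep a case analysis in the spirit of yours, you would need some such classification result (bridge-sphere uniqueness plus the slope parametrisation of curves on the $4$-punctured sphere) to replace the linking-number arguments; without it both of your contradictions evaporate.
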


\begin{proof}
Assume by contradiction that $L$ is a link with three components providing an
admissible root and such that $K_0$ is contained in $S:=\Fix(\gamma_1)$. 
Recall that, because of Claim~\ref{c:atleast2}, each component of $\OO_2$ must 
intersect $S$ transversally in two points. Thus $S$ gives a $2$-bridge 
decomposition of $\OO_2$. (For terminology and standard facts 
on $2$-bridge 
links, we refer to \cite[Section 2]{LS}.) By the uniqueness of the $2$-bridge 
spheres or by the classification of $2$-bridge spheres, $S$ is identified with 
the standard $2$-bridge sphere of the $2$-bridge link of slope $1/0$. The knot 
$K_0$ is an essential simple loop on the $4$-times punctured sphere 
$S \setminus \OO_2$, and the isotopy type of 
any such loop is completely determined by its slope $s\in \Q\cup\{1/0\}$.
We can easily check that the involution $\gamma_2$ sends a loop of slope $s$ to 
a loop of a slope of slope $-s$. Hence the slope of $K_0$ is either $0$ or 
$1/0$. According to whether the slope is $0$ or $1/0$, the link 
$L=K_0\cup \OO_2$ is the connected sum of two Hopf links or $3$-component 
trivial link, a contradiction.
\end{proof}

We shall now give the proof of Theorem~\ref{thm:minimal}. 
Claim~\ref{c:atleast2} shows that $\mu_{-}, \mu_{+}\ge 2$. The link $L_2$
defined in Section~\ref{section:roots}, on the other hand, shows that
$\mu_{-}\le 2$. In fact, both $\gamma_1$ and $\gamma_2$ act on the component 
$K_0$ by reversing its orientation, they extend to diffeomorphisms of 
$(\threesphere,K)$ which give negative-amphicheirality of the knot $K$.
The first part of the theorem now follows from Claims~\ref{c:atleast2} and 
\ref{c:K0}.

For the second part, once again the link $L_3$ defined in 
Section~\ref{section:roots} shows that $\mu_{+}\le 3$. 
Lemma~\ref{p:specialcase} assures that $\mu_{+}= 3$. In fact, both $\gamma_1$ 
and $\gamma_2$ act on the component $K_0$ by preserving its orientation, they 
extend to diffeomorphisms of $(\threesphere,K)$ which give 
positive-amphicheirality of the knot $K$. Now the second part of the theorem 
follows again from Claims~\ref{c:atleast2} and \ref{c:K0}. 

\medskip

We pass now to the proof of Theorem~\ref{thm1}. Assuming the hyperbolicity of 
the links $L_{\mu}$ with $\mu=2,3,6$, which is proved in 
Sections~\ref{section:link2}, \ref{section:link3} and \ref{section:link6},
the existence of prime, amphicheiral knots with free period $2$ was established 
in Section~\ref{section:roots}. 
To finish the proof of Theorem~\ref{thm1}, we 
only need to check that we can find prime amphicheiral knots admitting free 
period $2$ that are:
\begin{enumerate}
\item negative-amphicheiral but not positive-amphicheiral;
\item positive-amphicheiral but not negative-amphicheiral;
\item positive- and negative-amphicheiral at the same time, that is amphicheiral
and invertible.
\end{enumerate}

We show why these statements hold by proving a series of claims.

\begin{Claim}
\label{c:neg}
The prime amphicheiral knots whose root is the exterior of $L_2$ are negative-amphicheiral and cannot be positive-amphicheiral.
\end{Claim}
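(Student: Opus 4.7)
The plan is to combine the explicit geometric description of $L_2$ in Subsection~\ref{ss:l2} with Theorem~\ref{thm2}(2) and Lemma~\ref{p:specialcase}. First, for the negative-amphicheirality, I would identify how $K_0$ sits with respect to the fixed-point sets of $\G$. In Subsection~\ref{ss:l2} the trefoil arc is drawn so as to pass through the cone vertex, which lifts to $\Fix(\gamma_2)$ in $\threesphere$, while its two endpoints lie on the base of the cone, the image of $\Fix(\gamma_1)$. Hence in $\threesphere$ the component $K_0$ contains $\Fix(\gamma_2)$ and meets $\Fix(\gamma_1)$ transversely in two points. As noted in the proof of Claim~\ref{c:K0}, this forces $\gamma_1$ to act on $H_1(K_0;\Z)$ as multiplication by $-1$; since the free involution $f=\gamma_1\gamma_2$ acts trivially on $H_1(K_0;\Z)$, so does $\gamma_2$. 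The extensions $\hat\gamma_1$ and $\hat\gamma_2$ to $(\threesphere,K)$ produced in Section~\ref{section:knot} therefore reverse the orientation of $K$ and realise the negative-amphicheirality of $K$.

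Next, to rule out positive-amphicheirality I would argue by contradiction, relying on Theorem~\ref{prop:structure} to guarantee uniqueness of the admissible root up to isotopy. If $K$ were positive-amphicheiral, then Theorem~\ref{prop:structure} together with Theorem~\ref{thm2}(2) would produce, after a further isotopy of $L_2$, a $\G$-invariant position of $L_2$ in which $K_0 \subset \Fix(\gamma_1)$. But Lemma~\ref{p:specialcase} forbids precisely this configuration for any three-component link of the form $K_0 \cup \OO_2$ providing an admissible root, yielding the desired contradiction.

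I do not anticipate any serious technical obstacle; once Lemma~\ref{p:specialcase} and Theorem~\ref{thm2} are in place, the argument reduces to inspecting Figure~\ref{fig:root}. The one subtlety worth flagging is conceptual rather than computational: Theorem~\ref{thm2}(2) constrains $K_0$ only \emph{after} possibly re-isotoping $L_2$ into a new $\G$-invariant position, so one must invoke Lemma~\ref{p:specialcase}, which rules out the configuration $K_0\subset\Fix(\gamma_1)$ intrinsically for \emph{every} admissible embedding in the isotopy class of $L_2$.
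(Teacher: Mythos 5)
Your proof is correct, and its first half (negative-amphicheirality via the action of $\gamma_1$ on $H_1(K_0;\Z)$, forced by $K_0\supset\Fix(\gamma_2)$ and $K_0\pitchfork\Fix(\gamma_1)$) is exactly the paper's argument, just spelled out in more detail. For the second half you and the paper both funnel a hypothetical positive-amphicheirality through Theorem~\ref{thm2}(2) to conclude that $K_0$ would have to lie in the fixed $2$-sphere of a reflection belonging to some conjugate $\G'$-action; the difference is in how the contradiction is then extracted. The paper observes that a knot contained in a $2$-sphere is necessarily trivial, whereas the $K_0$ of $L_2$ is the square knot (a composite knot, being the double of a trefoil arc), so the contradiction is immediate and elementary. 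You instead invoke Lemma~\ref{p:specialcase}, which rules out $K_0\subset\Fix(\gamma_1)$ for \emph{any} three-component admissible root via the classification of $2$-bridge spheres. This is logically sound (the lemma is established earlier in the same section, so there is no circularity) and strictly more general --- it would still apply if $K_0$ happened to be unknotted --- but it is a heavier tool than the situation requires: the single observation that $K_0$ is knotted already finishes the proof. Your closing caveat about Theorem~\ref{thm2}(2) only constraining $K_0$ after a re-isotopy into a new $\G$-invariant position is well taken, and both routes dispose of it in the same way, since the property being contradicted (lying on an embedded $2$-sphere) is intrinsic to the link type.
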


\begin{proof}
Since $\gamma_1$ acts on $K_0$ by reversing its orientation, the extensions of 
both $\gamma_1$ and $\gamma_2$ act by inverting $K$, which is thus 
negative-amphicheiral. Assume now by contradiction that $K$ is also 
positive-amphicheiral. According to Theorem~\ref{thm2}, $L_2$ must admit 
another action $\G'$ of the Klein four group, containing a reflection 
$\gamma'_1$ in a $2$-sphere $\Fix(\gamma'_1)$, such that $K_0$ is contained in 
$\Fix(\gamma'_1)$. This is however impossible for $K_0$ is not trivial.
\end{proof}

\begin{Claim}
\label{c:pos}
The prime amphicheiral knots whose root is the exterior of $L_3$ are 
positive-amphicheiral. 
If the knots $K_i$, $i=1,2,3$, are all positive-amphicheiral but none of them 
is negative-amphicheiral, then $K$ itself is not negative-amphicheiral.
\end{Claim}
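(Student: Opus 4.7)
\textbf{Proof plan for Claim~\ref{c:pos}.}

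The first assertion is immediate from the construction of $L_3$ together with Section~\ref{section:knot}: since $K_0\subset \Fix(\gamma_1)$ by the choice of tangle in the fundamental domain, the extension $\hat\gamma_1$ of $\gamma_1$ to a diffeomorphism of $(\threesphere,K)$ provided there is orientation-reversing on $\threesphere$ but preserves the orientation of $K_0$, hence of $K$. Thus $K$ is positive-amphicheiral.

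For the second assertion my plan is a proof by contradiction. Suppose in addition that $K$ were negative-amphicheiral. Combining Theorem~\ref{prop:structure} with Remark~\ref{rem:invertible}, after an ambient isotopy we may assume that $(\threesphere,L_3)$ admits an action of $(\Z/2\Z)^3$ containing both $\G_+:=\G$ and a second Klein four subgroup $\G_-=\langle \gamma_1^-,\gamma_2^-\rangle$ sharing the free involution $f$. Moreover the whole $(\Z/2\Z)^3$ is contained in $\Isom^*(E_0)$, so every element of it extends to a self-diffeomorphism of $(\threesphere,K)$ and in particular preserves the distinguished component $K_0$ setwise.

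The heart of the plan is to pin down the position of the component $O_1\subset \OO_3$ which, by the construction of $L_3$, contains $\Fix(\gamma_2^+)$ and therefore has $\G_+$-stabiliser $\G_+$. Since $(\Z/2\Z)^3$ is abelian, $\gamma_1^-$ commutes with $\G_+$, so the image $\gamma_1^-(O_1)$ is again $\G_+$-stabilised by $\G_+$; by Theorem~\ref{thm2}(1) this image must be either $K_0$ or $O_1$, and as $K_0$ is fixed by every element of $\Isom^*(E_0)$ it must equal $O_1$. The relation $\gamma_2^-=\gamma_1^-\cdot f$ then shows that $\gamma_2^-$ also stabilises $O_1$. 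Applying Theorem~\ref{thm2}(1) now to the $\G_-$-action and using that $\Fix(\gamma_2^-)\subset K_0\neq O_1$ rules out the possibility that $O_1$ contains $\Fix(\gamma_2^-)$, forcing $O_1\subset \Fix(\gamma_1^-)$ as a great circle.

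Once $O_1$ is pinned down, a direct homological computation finishes the argument: because $\gamma_1^-$ is an orientation-reversing involution of $\threesphere$ that fixes $O_1$ pointwise, the intersection $\Fix(\gamma_1^-)\cap N(O_1)$ is an annulus containing $O_1$, so $\gamma_1^-$ restricts to $T_1=\partial N(O_1)$ as a reflection in two parallel longitudes of $O_1$; with the conventions of Section~\ref{section:knot} this induces on $H_1(T_1;\Z)$ the map $\ell_1\mapsto -\ell_1$, $m_1\mapsto m_1$. Since the extension of $\gamma_1^-$ to $(\threesphere,K)$ restricts to an orientation-reversing self-diffeomorphism of $E(K_1)$ realising this action on $T_1$, the case analysis of Case~2 in Section~\ref{section:knot} forces $K_1$ to be negative-amphicheiral, contradicting the hypothesis. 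The main obstacle is the rigidity step that locates $O_1$ inside $\Fix(\gamma_1^-)$; the subsequent homological obstruction is essentially the same computation already used to separate the two subcases of Case~2.
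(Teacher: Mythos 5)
Your proof is correct and follows essentially the same route as the paper: assume negative-amphicheirality, invoke Theorem~\ref{thm2} to produce a second Klein four-group $\G'$ with $\Fix(\gamma_2')\subset K_0$, locate a $\G'$-invariant component of $\OO_3$ which, by Claim~\ref{c:inv-comp}, must then be a great circle in $\Fix(\gamma_1')$, and conclude that the corresponding companion knot would have to be negative-amphicheiral, a contradiction. The only (harmless) difference is how that component is located: the paper uses a parity count (the free involution must fix one of the three components of $\OO_3$, and at most one component besides $K_0$ can have full stabiliser), whereas you use the commuting $(\Z/2\Z)^3$-action from Remark~\ref{rem:invertible} to show that the $\G$-invariant component $O_1$ is also $\G'$-invariant; both steps are valid and rest on results established before this claim.
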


\begin{proof}
The fact that $K$ is positive-amphicheiral can be seen as in 
Claim \ref{c:neg}.
Observe that $\gamma_1$ acts on each $O_i$ by reversing its orientation. As a
consequence, each $E(K_i)$ must be the exterior of a positive-amphicheiral
knot, i.e., each $K_i$ must be positive-amphicheiral. We assume now that each 
$K_i$ is not negative-amphicheiral. Suppose by contradiction that $K$ is 
negative-amphicheiral. Then, by Theorem~\ref{thm2}, $L_3$ admits another action 
$\G'=\langle \gamma_1',\gamma_2'\rangle$ of the Klein four-group such that 
$K_0$ contains $\Fix(\gamma_2')\cong \zerosphere$. Since $\OO_3$ has three
components, precisely one of them, say $O_j$, for a $j\in\{1,2,3\}$, must be 
$\G'$-invariant and, according to Claim~\ref{c:inv-comp}, it must be contained 
in the $2$-sphere $\Fix(\gamma_2')$. 
Since $\gamma_1'$ acts trivially on the first integral homology groups of such components,
$K_j$ must be negative-amphicheiral against the assumption.
\end{proof}

\begin{Claim}
\label{c:invertible}
Let $K$ be a prime amphicheiral knot admitting free period $2$ whose root is 
the exterior of $L_{\mu}$ with $\mu=3$ or $6$, that is constructed as in 
Section \ref{section:knot} by using the symmetry $\G$.

{\rm (1)}
If $\mu=3$ and $K_1$ is invertible, then $K$ is invertible.

{\rm (2)}
If $\mu=6$ and all $K_i$ are copies of the same negative-amphicheiral knot, 
then $K$ is invertible.
\end{Claim}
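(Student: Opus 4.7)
The strategy in both parts is to exploit the extra $\pi$-rotation symmetry $h$ of $L_{\mu}$ described in Section~\ref{section:roots}: an orientation-preserving diffeomorphism of $\threesphere$ that is a strong inversion of $K_0$. If we can extend $h$ to an involution $\hat{h}$ of $(\threesphere,K)$, then $\hat{h}$ will automatically be orientation-preserving on $\threesphere$ and reverse the orientation of $K$, realising the invertibility of $K$. The extension is to be built torus by torus, exactly in the style of Section~\ref{section:knot}.

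For each component $O_i$ of $\OO_{\mu}$, two cases arise. (a) If $h$ swaps $O_i$ with some other component $O_j$, then the construction forces $E(K_i)=E(K_j)$ as abstract manifolds (cf.\ Case~2 of Section~\ref{section:knot}), so $h$ extends by the diffeomorphism swapping these two identical copies. (b) If $h$ preserves $O_i$ setwise, then its restriction to $O_i$ is either the identity or orientation-reversing. In the reversing sub-case, $h_*$ on $H_1(T_i;\Z)$ equals $-\mathrm{Id}$ in the basis $(\ell_i,m_i)$, hence also in the basis $(\ell_{K_i},m_{K_i})=(m_i,\ell_i)$; extending across $E(K_i)$ therefore requires an orientation-preserving diffeomorphism of $(\threesphere,K_i)$ reversing the orientation of $K_i$, i.e., the invertibility of $K_i$.

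For (1), I would take $h=r^2=\gamma_1\gamma_3$ as in Subsection~\ref{ss:l3}. This element fixes $K_0$ and $O_1$ setwise and acts on each as a strong inversion, while it interchanges the $f$-pair $\{O_2,O_3\}$. Thus $O_1$ falls under case (b) and consumes exactly the hypothesis that $K_1$ is invertible, while $\{O_2,O_3\}$ is handled by case (a) because $K_2=K_3$ by construction. Matching the induced involutions on the gluing tori and gluing by a collar argument, as in Section~\ref{section:knot}, produces the desired involution $\hat h$ of $(\threesphere,K)$.

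For (2), I would take $h=v\gamma_1$, where $v$ is one of the three vertical reflections of $L_6$ recorded in Subsection~\ref{ss:l6}. Since all six $K_i$ are identical copies of a single knot, case (a) suffices for every pair of components transposed by $h$ without any further chirality assumption. The remaining task is to check, directly from the dihedral symmetry structure of $L_6$ depicted in Figure~\ref{fig:link3}, that $v$ can be chosen so that $h$ has no setwise-invariant component among $\OO_6$, ensuring case (b) does not arise. The main obstacle lies precisely here: a careful orbit analysis of the action of the vertical reflections on the six components is needed to confirm that some $v$ gives a free permutation of $\OO_6$, so that the extension of $h$ relies only on the shared-copy trick (which is exactly what the hypothesis on the $K_i$ supplies). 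Granted this, $h$ extends to an orientation-preserving involution of $(\threesphere,K)$ reversing the orientation of $K$, so $K$ is invertible.
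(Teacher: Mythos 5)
Your overall strategy --- extend the extra symmetry $h=\gamma_1\gamma_3$ of $L_{\mu}$ across the satellite pieces and observe that any extension reverses the orientation of $K$ while preserving that of $\threesphere$ --- is the same as the paper's. But two things go wrong in the execution. First, you insist on extending $h$ to an \emph{involution} $\hat h$ of $(\threesphere,K)$. This is impossible: as recorded in Remark~\ref{r:strongly-invertible}(1), $h$ stabilises components $O_i$ of $\OO_{\mu}$ on which it acts as a $\pi$-rotation, so its restriction to $T_i=\partial N(O_i)$ is a free involution whose slope is the longitude of $O_i$, i.e.\ the meridian of $K_i$, and the Smith conjecture then forbids any finite-order extension over $E(K_i)$. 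Invertibility only requires an orientation-preserving diffeomorphism of $(\threesphere,K)$ reversing the orientation of $K$, so the involution requirement should simply be dropped; as written, though, your plan cannot be carried out.

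Second, your orbit analysis of $h$ on $\OO_{\mu}$ is incorrect, and in part (2) this is exactly where your argument stalls. For $L_3$, $h$ does \emph{not} interchange $O_2$ and $O_3$ (that is what $f$ and $fh$ do): the stabiliser of $O_j$ for $j=2,3$ is $\langle\gamma_1,h\rangle$, so $h$ preserves each of $O_2$, $O_3$ and acts trivially on $H_1(T_j;\Z)$. This is the unobstructed sub-case of your case (b), and the only hypothesis consumed is the invertibility of $K_1$ at $O_1$, as in the paper. For $L_6$ you ask for a vertical reflection $v$ such that $v\gamma_1$ permutes the six components freely; since the $2\pi/3$-rotation commutes with $\gamma_1$ and conjugates the three vertical reflections to one another, every such product is conjugate to $h=\gamma_1\gamma_3$, which does stabilise components of $\OO_6$ (this is precisely why its extensions are never periodic), so no such $v$ exists --- and you acknowledge leaving this unresolved. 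The repair is the same as for $L_3$: on an $h$-invariant component where $h$ preserves orientation it acts trivially on $H_1(T_i;\Z)$ and extends with no hypothesis on $K_i$, just not as a periodic map. The paper proceeds slightly differently in (2): it uses the negative-amphicheirality of all the $K_i$ to extend $\gamma_3$ itself, so that every element of $\langle\gamma_1,\gamma_2,\gamma_3\rangle$ extends, and then observes that any extension of $h=\gamma_1\gamma_3$ realises the invertibility of $K$.
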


\begin{proof}
(1) Consider the link $L_3$. Then, by construction of the knot $K$, $K_1$ is a 
positive-amphicheiral knot admitting free period $2$, and $K_2$ and $K_3$ are 
copies of a positive-amphicheiral knot. As noted in Subsection \ref{ss:l3}, $L$ 
is invariant by the action of the group
$\hat \G=\langle \gamma_1,\gamma_2,\gamma_3\rangle <\Isom(\threesphere)$,
where $\gamma_i$ are 
orientation-reversing involutions
as illustrated in Figure \ref{fig:extra-sym}.
Note that $\hat \G$ is the direct product of the group 
$\G=\langle \gamma_1,\gamma_2\rangle$ and the order $2$ cyclic group generated 
by $h:=\gamma_1\gamma_3$. Then $h$ reverses the orientations of $K_0$ and $O_1$,
and preserves the orientations of $O_2$ and $O_3$. 
The stabiliser in $\hat \G$ 
of $O_j$ is equal to $\hat \G$ or $\langle \gamma_1, h\rangle$ according to 
whether $j=1$ or $j\in \{2,3\}$. Since $h$ acts on $H_1(T_j;\Z)$ ($j=2,3$) 
trivially, there is no obstruction in extending $h$ to $E(K_j)$ ($j=2,3$).
On the other hand, $h$ acts on $H_1(T_1;\Z)$ as $-I$. Thus if $K_1$ is 
invertible, then $h$ extends to a diffeomorphism of $(\threesphere,K)$. The 
extended $h$ gives invertibility of $K$.

(2) Consider the link $L_6$. Then $L_6$ is invariant by the group 
$\tilde \G=\langle \gamma_1,\gamma_2,\gamma_3\rangle <\Isom(\threesphere)$,
where $\gamma_3$ is the reflection in the vertical plane intersecting the 
projection plane in the horizontal pink line in Figure \ref{fig:link3}.
If all $K_i$ are copies of the same negative-amphicheiral knot, then each of 
$\gamma_3$ and $\rho$ extends to a diffeomorphism of $(\threesphere,K)$, and 
hence every element of $\tilde \G$ extends to a diffeomorphism of 
$(\threesphere,K)$. We can observe that any extension of $h:=\gamma_1\gamma_3$
realises the invertibility of $K$.
\end{proof}

The proof of Theorem~\ref{thm1} is now complete.

\begin{remark}
\label{r:strongly-invertible}
(1)
In the situation described in Claim~\ref{c:invertible}, the involution
$h=\gamma_1\gamma_3$ extends to a diffeomorphism $\hat h$ of $(\threesphere, K)$
which realises the invertibility of the knot. However, any extension 
$\hat h$ of $h$ (to be precise, the restriction of $h$ to $E(L_{\mu})$) to 
$(\threesphere, K)$ cannot be an involution. We explain this in the case where
the root of $K$ is the exterior of $L_3$. In this case $h$ stabilises all 
components of $L_3$, and acts as a $\pi$-rotation on each of the components 
$O_2$ and $O_3$. Thus, for $i=2,3$, the restriction of $h$ to 
$T_i=\partial N(O_i)$ is an orientation-preserving free involution whose slope 
is the longitude of $O_i$. This means that $h$ acts on $E(K_i)$, $i=2,3$
by preserving the meridian of $E(K_i)$. The positive solution to the Smith 
conjecture implies that $\hat h$ cannot have finite order. The same argument 
works for the case when the root of $K$ is the exterior of $L_6$.

(2)
Consider now the element $\gamma_2\gamma'_1=fh$. If the root of $K$ is the
exterior of $L_6$, then $fh$ extends to a strong inversion of $K$. 
To see this, it suffices to observe that this element does not
stabilise any component of $L_6$ other than $K_0$. In the case where the root
of $K$ is the exterior of $L_3$, $fh$ is a strong inversion provided that
the knot $K_1$
is strongly invertible, since $O_1$ meets 
$\Fix(fh)$
and is left invariant by $fh$, 
while the components $O_2$ and $O_3$ are exchanged.
\end{remark}

The following result is a consequence of the discussion in the previous section
and of Theorem~\ref{thm2}.

\begin{Proposition}
\label{c:isom}
Assume $K$ is a prime amphicheiral knot with free period $2$ and let $E_0$ be
its root which can be identified with the exterior of a link $L=K_0\cup\OO$.
Let $2n\ge2$ be the order of the cyclic group $\Isom^*_+(E_0)$. Then precisely
one of the following situations occurs:
\begin{enumerate}
\item $K$ is positive-amphicheiral but not negative-amphicheiral and
$\Isom^*(E_0)$ is isomorphic to $\Z/2n\Z\times\Z/2\Z$;
\item $K$ is negative-amphicheiral but not positive-amphicheiral and
$\Isom^*(E_0)$ is isomorphic to 
the dihedral group $\Z/2n\Z\rtimes\Z/2\Z$;
\item $K$ is invertible and $\Isom^*(E_0)$ is isomorphic to
the semi-direct product
$\Z/2n\Z\rtimes(\Z/2\Z\times\Z/2\Z)$ where one copy of $\Z/2\Z$ acts dihedrally
on $\Z/2n\Z$ and the other trivially.
\end{enumerate}
\end{Proposition}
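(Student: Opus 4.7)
The plan is to realise $\Isom^*(E_0)$ as a split extension of its normal cyclic subgroup $\Isom^*_+(E_0)$, and then to identify both the quotient and the conjugation action by examining how orientation-reversing elements act on the knot $K_0$.

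Consider the homomorphism $\Phi\colon \Isom^*(E_0) \to \{\pm 1\}\times\{\pm 1\}$ whose two coordinates record whether the extension to $(\threesphere,K)$ of a given element preserves the orientation of $\threesphere$ and of $K$, respectively. By definition $\ker\Phi=\Isom^*_+(E_0)$, so this subgroup is automatically normal, and cyclic of order $2n$ by Lemma~\ref{lem:cyclic}(2). By Theorem~\ref{prop:structure}(2), the non-trivial pairs appearing in the image of $\Phi$ correspond exactly to the amphicheiralities (and, in the invertible case, to inversion), and therefore the image has order $2$ in cases (1) and (2), while it equals $(\Z/2\Z)^2$ in case (3) since the product of a positive- and a negative-amphicheiral element lies in the invertibility class $(+,-)$.

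In cases (1) and (2), Lemma~\ref{lem:order-2} furnishes an involution $\gamma_\epsilon \in \Isom^*(E_0)$ realising the $\epsilon$-amphicheirality, so that $\Isom^*(E_0)=\langle c \rangle \rtimes \langle \gamma_\epsilon\rangle$ where $c$ generates $\Isom^*_+(E_0)$. In case (3), Lemma~\ref{lem:order-2} provides both $\gamma_+$ and $\gamma_-$ of order $2$, and the argument given just after Claim~\ref{c:K0} (together with Remark~\ref{rem:invertible}) shows that $(\gamma_+\gamma_-)^2=1$, essentially because $(\gamma_+\gamma_-)^2$ preserves both orientations and acts as the identity on $K_0$, so Remark~\ref{r:effective} forces it to be trivial. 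Thus $\langle f,\gamma_+,\gamma_-\rangle \cong (\Z/2\Z)^3$, and since $f$ is the unique involution in $\langle c\rangle$ (namely $f=c^n$), the whole group is generated by $c$, $\gamma_+$, $\gamma_-$, with the involutions $\gamma_+$ and $\gamma_-$ commuting.

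It remains to determine how $\gamma_\pm$ conjugate $c$. Extend the action of $\Isom^*(E_0)$ to $(\threesphere,L)$ via Lemma~\ref{lem:cyclic}(1); by Remark~\ref{r:effective}, $\Isom^*_+(E_0)$ acts faithfully on $K_0$, so $c$ restricts to a rotation of order $2n$ on the circle $K_0$. Choosing $\gamma_+$ to be an extension of the isometry $\gamma_1$, Theorem~\ref{thm2}(2) gives $K_0 \subset \Fix(\gamma_1)$, so $\gamma_+$ fixes $K_0$ pointwise; hence $\gamma_+c\gamma_+^{-1}$ and $c$ have identical action on $K_0$, and faithfulness yields $\gamma_+c\gamma_+^{-1}=c$. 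Analogously, Theorem~\ref{thm2}(3) forces $\gamma_-$ to act on $K_0$ as an orientation-reversing involution with two fixed points, i.e.\ as a reflection of the circle, so $\gamma_-c\gamma_-^{-1}$ and $c^{-1}$ agree on $K_0$ and therefore in $\Isom^*_+(E_0)$. Collecting the relations $c^{2n}=\gamma_+^2=\gamma_-^2=1$, $\gamma_+c\gamma_+^{-1}=c$, $\gamma_-c\gamma_-^{-1}=c^{-1}$, and $\gamma_+\gamma_-=\gamma_-\gamma_+$ with the outcome of the previous paragraph yields the three isomorphism types listed in the proposition.

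The main obstacle is the conjugation computation: one must check that the order-$2$ lift provided by Lemma~\ref{lem:order-2} can be chosen so that its action on $K_0$ matches the geometric picture of $\gamma_1$ supplied by Theorem~\ref{thm2}. Since any two lifts in the same $\Isom^*_+(E_0)$-coset differ by a power of $c$ and since $\gamma_\epsilon$ commutes with the unique involution $f=c^n$ of $\langle c\rangle$, this can be arranged while preserving the involution property; the passage from an identity of restricted actions on $K_0$ to an identity in $\Isom^*(E_0)$ then relies entirely on the faithfulness statement of Remark~\ref{r:effective}.
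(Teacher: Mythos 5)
Your proof is correct and follows essentially the same route as the paper: present $\Isom^*(E_0)$ as a split extension of the normal cyclic subgroup $\Isom^*_+(E_0)$ by $\Z/2\Z$ or $(\Z/2\Z)^2$, obtain the splitting from Lemma~\ref{lem:order-2} (and, in the invertible case, from the $(\Z/2\Z)^3$ discussion of Remark~\ref{rem:invertible}), and read off the conjugation action from the effective action on the circle $K_0$ guaranteed by Remark~\ref{r:effective}. Your explicit homomorphism $\Phi$ and the appeal to Theorem~\ref{thm2} for the action of $\gamma_\pm$ on $K_0$ merely make explicit what the paper summarises as ``$\hat\gamma$ acts dihedrally if and only if it reverses the orientation of the circle,'' so the two arguments coincide in substance.
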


\begin{proof}
Let $\check\gamma$ be an element in $\Isom^*(E_0)$ which reverses the
orientation of $E_0$. We saw in Lemma~\ref{lem:order-2} that, up to taking an
odd power, we can choose $\check\gamma$ to be of order $2$. This means that if
$K$ is not invertible the exact sequence
$$1\longrightarrow \Isom^*_+(E_0) \longrightarrow \Isom^*(E_0)\longrightarrow
\Z/2\Z \longrightarrow 1$$
splits and it is enough to understand how $\Z/2\Z$ acts on $ \Isom^*_+(E_0)$.
To conclude it suffices to observe that $\Isom^*(E_0)$ acts on the circle $K_0$
and the action of $\hat\gamma$ is effective if and only if it reverses the
orientation of the circle and thus acts dihedrally on $\Isom^*_+(E_0)$.

If $K$ is invertible, the argument is the same provided we can show that the
exact sequence
$$1\longrightarrow \Isom^*_+(E_0) \longrightarrow \Isom^*(E_0)\longrightarrow
\Z/2\Z \times \Z/2\Z \longrightarrow 1$$
splits again. 
This, however, follows easily from Remark \ref{rem:invertible}.
\end{proof}


\section{More information on the root $E_0=E(L)$}
\label{section:additional-information}

In this section, we give refinements of the arguments in the previous section,
and present more detailed results on the structure of the root $E_0=E(L)$.

We first give a characterisation 
of the links $L=K_0\cup\OO_{\mu}$ that provide an admissible root of a positive 
amphicheiral knot. Recall that, after an isotopy, such a link $L$ is invariant 
by the action $\G$ and $K_0$ is contained in $\Fix(\gamma_1)$.
The following proposition provides a more precise description of such links.

\begin{Proposition}\label{p:K0inS}
Let $L=K_0\cup \OO_{\mu}$ 
be a link providing an admissible root. Assume that $K_0$
is contained in $S=\Fix(\gamma_1)$. 
Then the following hold.
\begin{enumerate}
\item
Suppose no component of $\OO_{\mu}$ is $f$-invariant. Then there is an 
$f$-invariant family of pairwise disjoint discs $\{D_i\}_{i=1}^\mu$ such that 
$\partial D_i=O_i$ and $D_i$ intersects $S$ transversely 
precisely
in a single arc 
($1\le i\le \mu$).
\item
Suppose one component, say $O_1$, of $\OO_{\mu}$ is $f$-invariant.
Then there is an $f$-invariant family of discs 
$\{D_1',D_1''\}\cup\{D_i\}_{i=2}^\mu$ with disjoint interiors, such that 
$\partial D_1' =\partial D_1''=O_1$, $\partial D_i=O_i$ ($2\le i\le \mu$),
and that each disc 
in the family intersects $S$ transversely 
precisely
in a single arc.
\end{enumerate}
In particular, each component of $\OO_{\mu}$ meets 
$S$ transversally in two points, in both cases.
\end{Proposition}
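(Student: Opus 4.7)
My plan is to attack the proposition in two stages: first establish that each component of $\OO_\mu$ meets $S=\Fix(\gamma_1)$ transversely in exactly two points, and only then construct the required $f$-equivariant family of spanning discs.

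For the intersection count, three observations suffice. First, if some $O_i$ were disjoint from $S$, it would lie in one of the two closed $3$-balls $B_+,B_-$ bounded by $S$. Since $\OO_\mu$ is a trivial link in $S^3$ and $B_\pm$ is a ball, standard innermost-disc surgery against $S$ produces a disc $D\subset\mathrm{int}\,B_\pm$ with $\partial D=O_i$, disjoint from $\OO_\mu\setminus O_i$ and from $K_0\subset S$. Such a $D$ is a compressing disc for the torus $\partial N(O_i)\subset\partial E(L)$, contradicting the $\partial$-incompressibility of $E(L)$ implied by hyperbolicity. Hence every $O_i$ meets $S$; the transversality of the intersection plus the disjointness of link components then forces $\gamma_1(O_i)=O_i$, so $O_i$ is $\gamma_1$-invariant. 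Third, $\gamma_1|_{O_i}$ is a smooth involution of the circle $O_i$ and is therefore conjugate to the identity, a rotation by $\pi$, or a reflection, with respective fixed-point counts $\infty$, $0$, $2$; the first is excluded by Claim~\ref{c:inv-comp} (no component of $\OO_\mu$ lies in $S$) and the second by $O_i\cap S\neq\emptyset$, so $|O_i\cap S|=2$.

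For the disc family, choose for each $O_i$ a disc $D_i$ spanning it in $S^3$ and put it in general position with $S$. Since $|O_i\cap S|=2$, the $1$-manifold $D_i\cap S$ consists of exactly one arc together with a finite collection of circles in the interior of $D_i$; the circles are removed by the usual innermost-disc surgery, using that they bound discs in $S\cong S^2$, after which $D_i\cap S$ is a single arc. To obtain $f$-equivariance in Case~(1), choose discs on a set of representatives of the $f$-orbits of $\OO_\mu$ and let $f$ copy them onto the remaining orbit components; any interior overlaps between different $D_i$'s are then removed by equivariant innermost-disc/arc surgery within the handlebody $E(\OO_\mu)$. In Case~(2), the $\G$-invariant component $O_1$ cannot be spanned by a single $f$-invariant disc, since $f|_{O_1}$ is the fixed-point-free antipodal map of the circle and any $f$-invariant spanning disc would carry an involution of $D^2$ that by Brouwer's theorem has an interior fixed point, contradicting the freeness of $f$; we are therefore forced to take a pair $D_1', D_1''=f(D_1')$ of discs with common boundary $O_1$, and a further round of equivariant innermost-disc surgery arranges disjoint interiors.

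The main obstacle will be the equivariant bookkeeping in the second stage, specifically ensuring that the innermost surgeries used to remove circles from $D_i\cap S$ and to remove mutual interior intersections between different $D_i$'s (or between $D_1'$ and $D_1''$ in Case~(2)) can be carried out compatibly with the $f$-action and without disturbing the single-arc structure of each $D_i\cap S$. The compressing-disc argument of the first stage and the involution-fixed-point argument in Case~(2) are clean and short; what remains is a careful but standard surgery calculus inside the handlebody exterior of $\OO_\mu$.
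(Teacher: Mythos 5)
There are genuine gaps, and they occur exactly where the paper concentrates its effort. Your Stage~1 claim that a component $O_i$ disjoint from $S$ would bound a disc in $\interior B_\pm$ disjoint from $L\setminus O_i$ does not follow from ``standard innermost-disc surgery against $S$'': an innermost circle $C$ of $D\cap S$ bounds two subdiscs of $S$, but both may contain points of $K_0$ or punctures of $\OO_\mu\cap S$, so replacing the subdisc of $D$ by a push-off of a subdisc of $S$ can create new intersections with $L$ and destroy the disjointness from $\OO_\mu\setminus O_i$ that you started with. (A disc bounded by $O_i$ and disjoint from the \emph{other} $O_j$'s always exists since $\OO_\mu$ is trivial; the whole difficulty is $K_0$, and your surgery does not control it.) Likewise, your exclusion of $O_i\subset S$ by appeal to Claim~\ref{c:inv-comp} is not valid: that claim only constrains components whose stabiliser is all of $\G$, whereas a component of $\OO_\mu$ lying in $S$ could a priori have stabiliser $\langle\gamma_1\rangle$. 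In the paper both conclusions are obtained only \emph{after} the disc family has been constructed: once all circle components of $S\cap(\cup_i\interior D_i)$ have been removed, a component disjoint from (or contained in) $S$ would have its disc's interior disjoint from $S\supset K_0$ and from $\OO_\mu$, splitting $L$. So the order of your two stages cannot be maintained with the arguments you give.

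Your Stage~2 then defers precisely the crux to ``standard surgery calculus.'' Two points are not routine. First, producing an $f$-invariant family of \emph{pairwise disjoint} spanning discs is not achieved by choosing discs on orbit representatives and transporting by $f$: removing the intersections $D_i\cap f(D_i)$ equivariantly is exactly the content of the equivariant Dehn's lemma of Meeks--Yau, which the paper invokes at the outset and you do not. Second, removing the circles of $S\cap(\cup_i\interior D_i)$ equivariantly, while keeping the discs mutually disjoint and disjoint from $L$, is where the paper's real work lies: it introduces an $f$-equivariant notion of the ``inside'' of such a circle, splits the circles into two types according to whether they enclose arc components, and, for the harder type, uses the reflection $\gamma_1$ (not merely $f$) to manufacture surgery discs $\gamma_1(\Delta)$ and $f(\gamma_1(\Delta))$ that are automatically disjoint from the $\gamma_1$-invariant link $L$ and lie in the two complementary balls. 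None of this is visible in your outline, and your own closing paragraph concedes that the ``equivariant bookkeeping'' is unresolved. Your Brouwer fixed-point argument in Case~(2), showing that the $f$-invariant component $O_1$ cannot bound a single $f$-invariant disc, is correct and is a nice complement to the paper, which simply takes the two discs $D_1',D_1''$ from the equivariant Dehn's lemma; but it does not repair the gaps above.
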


\begin{proof}
(1) Suppose no component of $\OO_{\mu}$ is $f$-invariant.
Then, since $f$ is orientation-preserving and since $\OO_{\mu}$ is a trivial link, 
the equivariant Dehn's lemma \cite[Theorem 5]{MY}
implies that
there is an $f$-invariant family of mutually disjoint 
discs $\{D_i\}_{i=1}^\mu$ such that $\partial D_i=O_i$. 
To prove the proposition 
we will show that
we can choose $\{D_i\}_{i=1}^\mu$ so that
each $D_i$ intersects
$S:=\Fix(\gamma_1)$ precisely in one arc.

By small isotopy, we can assume that 
the family $\{D_i\}_{i=1}^\mu$
intersects the sphere $S$ transversally:
in the case where $O_i=\partial D_i$ is contained in $S$ 
by ``transversally along $O_i$" we mean that there is 
a collar neighbourhood of $\partial D_i$ in $D_i$ that intersects $S$ only 
along $\partial D_i$. 

We want to show that one can eliminate all 
circle components of 
$S\cap(\cup_{i=1}^\mu \interior D_i)$
We start by observing that, for each such circle component $C$, 
there is a well-defined notion of 
the inside of $C$ (in the sphere $S$).
Indeed, for each $C$ one can consider
the circle $f(C)$. 
These are two disjoint circles in $S$, so they bound disjoint
subdiscs of $S$, 
and the {\em inside} of $C$ is defined to be the interior of the 
subdisc bounded by $C$ that is disjoint from $f(C)$: 
observe that the notion of the inside is $f$-equivariant,
i.e., the inside of $f(C)$ is the image of the inside of $C$ by $f$. 
Remark now that  
a circle component of $S\cap(\cup_{i=1}^\mu \interior D_i)$ can be of two types:
either it contains some arc 
component of $S\cap(\cup_{i=1}^\mu D_i)$ 
({\em first type}) or it does not ({\em second type}).

We can eliminate all circles of the second type in an 
$f$-equivariant way, as follows.
Note that a circle 
of this type can only contain circles of the same type
in its inside.
Let $C$ be any such circle which is innermost in $S$ so that
$f(C)$ is also innermost in $S$. The circle $C$ (respectively $f(C)$) is also 
contained
in a disc $D_i$ (respectively $f(D_i)$) of our family, where it 
bounds
a subdisc. We now replace by surgery such subdisc in $D_i$ (respectively 
$f(D_i)$) with a disc parallel to the subdisc of $S$ contained in $C$ 
(respectively $f(C)$) slightly off $S$, chosen appropriately on the side of 
$S$ that allows to eliminate the intersection. Notice that this operation can 
be carried out even when $C$ (respectively $f(C)$) contains points of $K_0$ and 
it may result in eliminating other circle intersections, since $C$
(respectively $f(C)$) is not necessarily innermost in $\cup_{i=1}^\mu D_i$.

The above argument shows that we can assume that all circles in
$S\cap(\cup_{i=1}^\mu \interior D_i)$
are of the first type, i.e.,
contain arc components of $S\cap(\cup_{i=1}^\mu D_i)$. 
Under
this hypothesis, we now show how to eliminate all circles. 
Let now $C$ be a circle component of 
$S\cap(\cup_{i=1}^\mu \interior D_i)$
which is innermost in $\cup_{i=1}^\mu D_i$: 
$f(C)$ is also innermost in $\cup_{i=1}^\mu D_i$.
Let $\Delta$ be the disc bounded by $C$ in $\cup_{i=1}^\mu D_i$: it is 
entirely contained in one of the two balls bounded by $S$ that we shall denote 
by $B^+$. The disc $f(\Delta)$, bounded by $f(C)$ is contained in the second 
ball, denoted by $B^-$. Consider now $\gamma_1(\Delta)\subset B^-$ and 
$\gamma_1(f(\Delta))=f(\gamma_1(\Delta))\subset B^+$. Since $\OO_{\mu}$ is 
$\gamma_1$-invariant,
the interiors of these 
discs
are disjoint from $\OO_{\mu}$ and 
also from $L$. Up to small isotopy, we can assume that the two discs meet the 
family $\cup_{i=1}^\mu D_i$ transversally. 
If the interior of $\gamma_1(\Delta)$ is disjoint from $\cup_{i=1}^\mu D_i$
(and so is the interior of $f(\gamma_1(\Delta))$),
then we can use these two 
discs to remove the intersection $C$ and $f(C)$.
Otherwise, we 
perform $f$-equivariant surgery along the family $\cup_{i=1}^\mu D_i$ in order 
to eliminate all intersections in the interior of the two discs 
$\gamma_1(\Delta)$ and $f(\gamma_1(\Delta))$,
as follows.
Indeed, let $C'$ be an innermost 
circle of intersection in $\gamma_1(\Delta)$. $C'$ must be contained in and 
bound a subdisc of some disc $D_i$. One can now replace the subdisc of $D_i$ 
with a copy of the disc bounded by $C'$ in $\gamma_1(\Delta)$ to reduce the 
intersection. At the same time, one can replace the subdisc bounded by $f(C')$ 
in $f(D_i)$ with the subdisc bounded by $f(C')$ in 
$f(\gamma_1(\Delta))$: note that these operations take place in disjoint balls. 
We stress again that such surgery can only diminish the number of components of 
$S\cap(\cup_{i=1}^\mu \interior D_i)$
because $C'$ and $f(C')$ are not necessarily innermost
in $\cup_{i=1}^\mu D_i$. 

We continue to denote by 
$\{D_i\}_{i=1}^\mu$
the family obtained 
after surgery. 
Let $C$ be the circle chosen at the beginning of the preceding paragraph.
If it is no longer 
contained in 
$\cup_{i=1}^\mu D_i$
then there is nothing to do;
note that in this case the intersection $f(C)$ has also been removed. 
Suppose $C$ is contained in $\cup_{i=1}^\mu D_i$.
If the interior of $\gamma_1(\Delta)$ is not disjoint from $\cup_{i=1}^\mu D_i$,
then we repeat the preceding argument to decrease the intersection.
If the interior of $\gamma_1(\Delta)$ is disjoint from $\cup_{i=1}^\mu D_i$,
then can use $\gamma_1(\Delta)$ and $f(\gamma_1(\Delta))$ to remove $C$ and 
$f(C)$, as in the preceding paragraph.

The above shows that the family can be chosen so that 
$S\cap(\cup_{i=1}^\mu \interior D_i)$
does not contain circle components.
This implies immediately that no component of $\OO_{\mu}$ can be disjoint from
the sphere $S$ or contained in it for in this case the link $L$ would be
split,
contrary to the assumption that it is hyperbolic: indeed if $O_i$ is any
such component, the interior of the disc $D_i$ in the family just constructed
is disjoint from $\OO_{\mu}$ and $S$, and thus does not meet $K_0$ either.   
This completes the proof of 
the assertion (1) of the proposition.

(2) Suppose one component, say $O_1$, of $\OO_{\mu}$ is $f$-invariant.
By Theorem \ref{thm2}(1), the other components of $\OO_{\mu}$ 
are not $f$-invariant. Then by 
the equivariant Dehn's lemma,
there is an $f$-invariant family of $\mu+1$ discs 
$\{D_1',D_1''\}\cup\{D_i\}_{i=2}^\mu$ with disjoint interiors, such that 
$\partial D_1' =\partial D_1''=O_1$, $\partial D_i=O_i$ ($2\le i\le \mu$).
We can assume that this family intersects the sphere $S$ transversally.
For each loop component $C$ of 
$\mathcal{I}:=S\cap ((\interior D_1'\cup \interior D_1'')\cup(\cup_{i=2}^{\mu} \interior D_i))$,
we define its inside and its type as in the proof of (1).
Note that each of $S\cap D_1'$ and $S\cap D_1''$
contains a unique arc component, denoted by 
$\alpha_1'$ and $\alpha_1''$
respectively, and the union 
$\alpha_1'\cup \alpha_1''$
forms a great circle in 
$S$. This implies that no loop component of $\mathcal{I}$
contains 
$\alpha_1'$ or $\alpha_1''$
in its inside.
(It should be also noted that the loop $\alpha_1'\cup\alpha_1''$
is not contained in $\mathcal{I}$.)
Now, the argument in the proof of (1) works verbatim, 
and we can remove all loop components of $\mathcal{I}$,
completing the proof of (2) of the proposition.
\end{proof}

\begin{remark}\label{rem:arc-presentation}
{\rm
In the above proposition, the link $L=K_0\cup \OO_{\mu}$
is recovered from the $f$-invariant arc system in $S$
which is obtained as the intersection of the $f$-invariant family of disks with $S$.
To explain this, identify $\threesphere$ with the suspension of $S$,
the space obtained from $S\times [-1,1]$ by identifying the subspaces
$S\times \{\pm 1\}$ to a point. 
We assume that the $\G$-action on $\threesphere$ is equivalent to 
the $\G$-action on the suspension 
that is obtained from the natural product action of $\G$ on $S\times[-1,1]$.
Then the following hold.

\begin{enumerate}
\item
In the first case, set $\alpha_i=D_i\cap S$ ($1\le i\le \mu)$.
Then $L$ is $\G$-equivariantly homeomorphic to the link in the suspension
obtained as the image of
\[
K_0\cup(\cup_{i=1}^{\mu} \partial(\alpha_i\times [-1/2,1/2])\subset S\times [-1,1].
\]
\item
In the second case, set $\alpha_1'=D_1'\cap S$ and
$\alpha_i=D_i\cap S$ ($2\le i\le \mu)$.
Then $L$ is $\G$-equivariantly homeomorphic to the link in the suspension
obtained as the image of
\[
K_0\cup\partial(\alpha_1'\times [-1,1])\cup(\cup_{i=2}^{\mu} 
\partial(\alpha_i\times [-1/2,1/2]))
\subset S\times [-1,1].
\]
It should be noted that the image of $\partial(\alpha_1'\times [-1,1])$
in the suspension of $S$ is the suspension of $\partial \alpha_1'=O_1\cap S\subset S$.
Moreover, if $\alpha_1$ is any arc in $S$ with endpoints $O_1\cap S$
such that $\alpha_1\cap f(\alpha_1)=\partial \alpha_1$,
then $L$ is ($\langle \gamma_1\rangle$-equivariantly, but not $\G$-equivariantly)
homeomorphic to the link in the suspension
obtained as the image of
\[
K_0\cup\partial(\alpha_1\times [-2/3,2/3])\cup(\cup_{i=2}^{\mu} 
\partial(\alpha_i\times [-1/2,1/2]))
\subset S\times [-1,1].
\]
\end{enumerate}
For example, the links $L_6$ and $L_3$, respectively, satisfy the condition (1) 
and (2) of Proposition \ref{p:K0inS}, and they are represented by the arcs systems 
(1) and (2) in Figure~\ref{fig:arc-system}.
}
\end{remark}

\begin{figure}[h]
\begin{center}
 {
  \includegraphics[height=5cm]{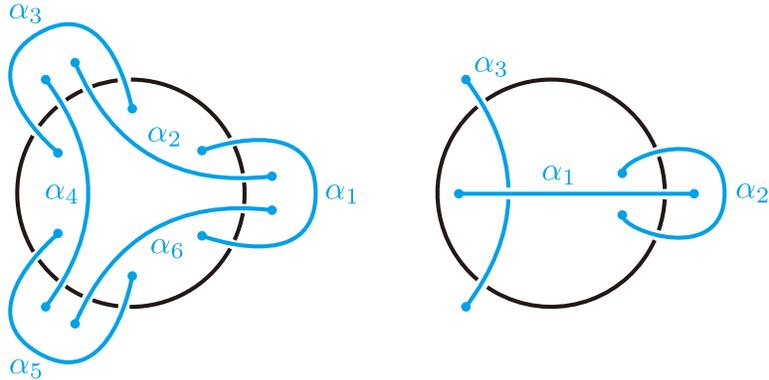}
 }
\end{center}
\caption{The arc systems for a $L_6$ and $L_3$.}
\label{fig:arc-system}
\end{figure}

Next, we present the following generalisation of 
Remark~\ref{r:strongly-invertible}(1).

\begin{Proposition}
\label{p:strongly-invertible}
Let $K$ be an invertible amphicheiral knot having free period $2$ and let
$L=K_0\cup \OO_\mu$ be its root. Let $\gamma_i$ and $\gamma_i'$ respectively, 
$i=1,2$, be the symmetries of $L$ generating the two $\G$-actions (compare 
Remark~\ref{rem:invertible} and Subsection~\ref{ss:l3}). 
Then any extension $\hat h$ 
of the element $h=\gamma_1\gamma_1'$ is never a strong inversion of $K$.  
\end{Proposition}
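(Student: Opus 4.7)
The plan is to argue by contradiction: assume $\hat h$ is a strong inversion of $K$. Then $\hat h$ is an orientation-preserving involution of $\threesphere$ preserving $K$, and its fixed-point set $A=\Fix(\hat h)$ is a single embedded circle meeting $K$ transversely in exactly two points, both of which lie on $K_0$.

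The main step is to produce a component $O_j$ of $\OO_{\mu}$ which is $h$-invariant and such that $h|_{T_j}$ is a \emph{free} involution of $T_j=\partial N(O_j)$ whose slope is the longitude $\ell_j$ of $O_j$---equivalently, under our gluing convention, the meridian $m_{K_j}$ of the companion knot $K_j$. Granted the existence of such $O_j$, the argument of Remark~\ref{r:strongly-invertible}(1) adapts to give the desired contradiction: since $h|_{T_j}$ is free, $A\cap T_j=\emptyset$; as $A$ is a connected circle meeting $K\subset N(K_0)$, which is disjoint from $E(K_j)$, connectedness forces $A\cap E(K_j)=\emptyset$. Hence $\hat h|_{E(K_j)}$ is a free involution of $E(K_j)$ restricting on $T_j$ to the free involution of slope $m_{K_j}$. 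Attaching to $E(K_j)$ a solid torus along $m_{K_j}$ recovers $\threesphere$ with $K_j$ as a knot, and the involution extends across the attached solid torus with fixed-point set exactly the core, i.e.\ $K_j$. The positive solution of the Smith conjecture then forces $K_j$ to be unknotted, contradicting the non-triviality of the companions in the JSJ decomposition of $E(K)$.

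The hard part, and the main obstacle, is producing the component $O_j$. For this I would use Remark~\ref{rem:invertible}: the full Klein $8$-group $H=\langle f,\gamma_1,\gamma_1'\rangle\cong(\Z/2\Z)^3$ may be assumed to act linearly on $\threesphere$, so that $\Fix(h)$ is a great circle $G_h\subset\Fix(\gamma_1)\cap\Fix(\gamma_1')$ meeting $K_0$ in precisely the two points fixed by $h|_{K_0}$. Applying Claim~\ref{c:atleast2} to each of the two $\G$-actions yields pairs of components with prescribed $\G$- and $\G'$-stabilisers; Lemma~\ref{lem:no-f-stab} forbids $f$ from appearing in any component stabiliser, which restricts the possible $H$-orbit structures. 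A case analysis on how $\gamma_1'$ permutes the $\G$-pair (and symmetrically $\gamma_1$ the $\G'$-pair), broken according to whether these two pairs coincide, yields in every configuration a component $O_j$ with $\{\gamma_1,\gamma_1'\}\subset\Stab_H(O_j)$, and therefore with $h\in\Stab_H(O_j)$. Such an $O_j$ meets $\Fix(\gamma_1)$ and $\Fix(\gamma_1')$ transversely in two points each; these four points must be pairwise distinct on $O_j$, since otherwise $h$ would act trivially on an open neighbourhood of $T_j$ in $E_0$, contradicting the faithfulness of the isometric $H$-action on the hyperbolic manifold $E_0=E(L)$. Consequently $O_j\cap G_h=\emptyset$, so $h|_{O_j}$ is free and $h|_{T_j}$ is a free involution of slope $\ell_j$, as required to close the argument.
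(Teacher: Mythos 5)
Your second half is sound and is exactly the mechanism the paper itself exploits: an $h$-invariant component $O_j$ on which $h$ acts freely with slope the longitude forces, via the Smith conjecture as in Remark~\ref{r:strongly-invertible}(1), that $\hat h$ cannot have finite order. The gap is in the step you yourself flag as the hard part, namely producing $O_j$. The case analysis you sketch, using only Claim~\ref{c:atleast2} for the two Klein four-groups and Lemma~\ref{lem:no-f-stab}, does not close. Concretely, nothing in those two statements excludes the configuration in which $\gamma_1'$ interchanges the two components of the $\G$-pair while $\gamma_1$ interchanges the two components of the $\G'$-pair, the two pairs being disjoint. In that configuration the $H$-stabilisers of the pairs are $\langle\gamma_1,\gamma_2'\rangle$ and $\langle\gamma_2,\gamma_1'\rangle$ respectively (in the abelian group $H$ one has $f\gamma_1'=\gamma_2'$ and $f\gamma_1=\gamma_2$); neither contains $f$, each meets the relevant four-group in exactly the order-two subgroup prescribed by Claim~\ref{c:atleast2}, and no component is stabilised by both $\gamma_1$ and $\gamma_1'$. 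Ruling this out requires the geometric input of Proposition~\ref{p:K0inS}: since $K$ is positive-amphicheiral, $K_0\subset S=\Fix(\gamma_1)$, so \emph{every} component of $\OO_\mu$ meets $S$ transversally in two points and is therefore $\gamma_1$-invariant; applied to the $\G'$-pair this is what forces $\{\gamma_1,\gamma_1'\}\subset\Stab_H(O_j)$. This is precisely the ingredient the paper uses, though it runs the argument in the contrapositive direction: it first shows, via the Smith/slope consideration, that under your hypothesis no component can meet both $S$ and $S'=\Fix(\gamma_1')$ transversally, and then contradicts Proposition~\ref{p:K0inS} together with Claim~\ref{c:atleast2} for $\G'$.

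A secondary flaw: your justification that the four points of $O_j\cap(S\cup S')$ are pairwise distinct is incorrect. If two of them coincide, that point lies on $\Fix(h)=S\cap S'$ and $h$ then acts on $O_j$ as a reflection with two fixed points; it certainly does not act trivially on a neighbourhood of $T_j$, so faithfulness of the $H$-action on $E_0$ is not contradicted. This degenerate case genuinely blocks your slope argument and must be excluded separately (the paper's proof absorbs it into the dichotomy that a component either meets $\Fix(h)$ or has even, hence zero, linking number with it). With Proposition~\ref{p:K0inS} added and the distinctness point repaired, your direct argument does go through, but as written both steps are gaps.
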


\begin{proof}
Assume by contradiction that $\hat h$ is a strong inversion of $K$ extending 
$h$. Recall that according to Theorem~\ref{thm2}, $K_0$ is a trivial knot 
contained in the $2$-sphere $S=\Fix(\gamma_1)$, moreover the $2$-sphere 
$S'=\Fix(\gamma'_1)$ intersects $S$ perpendicularly and meets $K_0$ in two 
antipodal points. Consider now the sublink $\OO=\OO_\mu$ of $L$: 
according to Theorem~\ref{thm2} it must contain two components that intersect 
transversally $S$ and two components that intersect transversally $S'$. We note 
that, under our hypotheses, a component that meets $S$ (respectively $S'$) 
transversally cannot intersect $S'$ (respectively $S$) transversally.
This follows from the fact that, by the consideration in 
Remark~\ref{r:strongly-invertible}, $h$ cannot leave a component invariant and 
act as a rotation on it, so that a component either intersects $\Fix(h)$ or its
linking number with $\Fix(h)=S\cap S'$ must be even, and hence zero. 
This implies that $\OO$ contains (at least two) components that intersect $S'$ 
transversally and are either contained in $S$ or disjoint from it. 
This is however impossible according to Proposition~\ref{p:K0inS}. 
\end{proof}

Finally, we show that the first assertion of Theorem~\ref{thm2}
(i.e., Claim~\ref{c:inv-comp}) is \lq\lq best possible",
in the sense that all situations described in the claim can arise.

Let $L=K_0\cup\OO_{\mu}$ be a link which provides an admissible root.
We first assume that $K_0$ is the unique $\G$-invariant component of $L$.
Then $L$ satisfies one of the following 
conditions.

\begin{enumerate}
\item
$K_0$ is a trivial knot contained in $Fix(\gamma_1)$.
\item
$K_0$ is a trivial knot meeting $Fix(\gamma_1)$ in two points.
\item
$K_0$ is a composite knot meeting $Fix(\gamma_1)$ in two points.
\end{enumerate}
The links $L_6$ and $L_2$ in Section~\ref{section:roots} provide examples of 
the first and the third situations, respectively.
We show that the second situation can also occur.
Consider the configuration of Figure~\ref{fig:trivialcomp}, where each small 
box represents a rational tangle such that (i) the strands of the tangles 
behave combinatorially as the dotted arcs inside the boxes, and (ii) the 
tangles are not a sequence of twists.

\begin{figure}[h]
\begin{center}
 {
  \includegraphics[height=5cm]{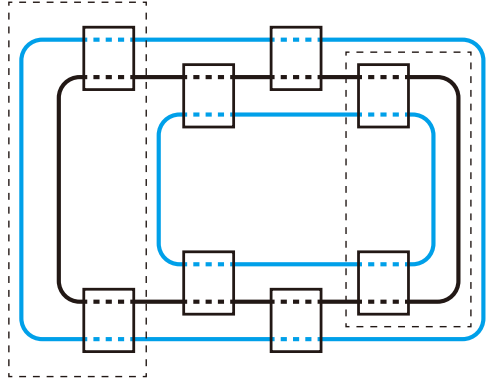}
 }
\end{center}
\caption{A link providing an admissible root with a single trivial
$\G$-invariant component containing $Fix(\gamma_2)$,
which forms a trivial knot.}
\label{fig:trivialcomp}
\end{figure}

For each choice of rational tangles as above, the result is a link with three 
trivial components (note that when forgetting 
the outer- respectively inner-component, 
the inner- respectively outer-component 
and the central one form a 
Montesinos link). The exterior of the the four central small boxes
and of the dotted ones is a \emph{basic polyhedron} (see \cite[ch. 10]{Ka}), so 
it is $\pi$-hyperbolic by Andreev's theorem. The interiors of the dotted boxes 
have Seifert fibred double covers, regardless of the chosen rational tangles. 
It follows that, for sufficiently large rational
tangles inserted into the four central 
small boxes, the exterior of the dotted boxes is $\pi$-hyperbolic. It follows 
that, for sufficiently large 
rational
tangles, the Bonahon-Siebenmann decomposition 
\cite{BS1} of the orbifold associated to the link with branching order $2$ 
consists of three geometric pieces: two Seifert fibred ones, and a 
$\pi$-hyperbolic one which is their complement. Since the Seifert fibred ones 
are atoroidal (and the $\pi$-hyperbolic one is anannular), the link is 
hyperbolic. It is now easy to see that the construction can be carried out in
a $\G$-equivariant way so that the black central component is $\G$-invariant,
providing an admissible root with the desired property.

\medskip

We next consider the case where $L$ has two $\G$-invariant components.
Then these can be both trivial, as is the case of $L_3$, or one trivial and one
composite. An example of the latter type can be built from the same tangle 
in the half ball
used to construct $L_2$ but with a different gluing and an extra component 
contained in $Fix(\gamma_1)$, see Figure~\ref{fig:mixcomp}. Hyperbolicity of
this link can be checked using   
a computer program like SnapPea, SnapPy, or HIKMOT,
or can be proved following the same lines as the proof provided for $L_2$ (see Section~\ref{section:link2}) by a slight 
adaptation of Lemma~\ref{lem:simple-1-4}, since $K_D$ is in a different 
position: details are left to the interested reader.

\begin{figure}[h]
\begin{center}
 {
  \includegraphics[height=5cm]{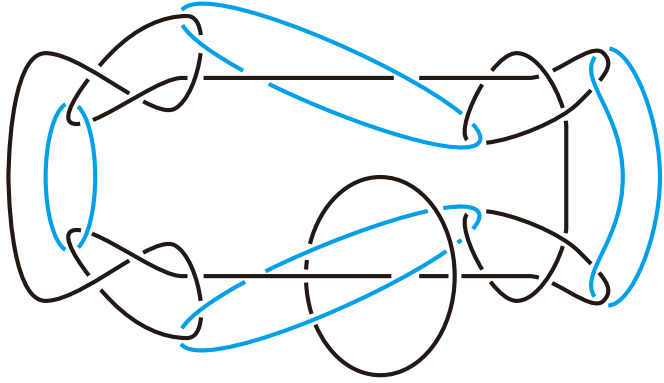}
 }
\end{center}
\caption{A link providing an admissible root with two $\G$-invariant components
consisting of a trivial knot and a composite one.}
\label{fig:mixcomp}
\end{figure}

These two observations say that 
the first assertion of Theorem~\ref{thm2}
(i.e., Claim \ref{c:inv-comp}) is \lq\lq best possible".


\section{The link $L_2$ is hyperbolic}
\label{section:link2}


Consider the link $L_2=K_0\cup \OO_2$ with $\OO_2=O_1\cup O_2$ in
Figure~\ref{fig:root}(3). Then $L_2$ is the double of the tangle
$(\threeball, \tau):=(\threeball, \tau_0\cup \tau_1\cup \tau_2)$ in
Figure \ref{fig:root}(2).
The tangle $(\threeball, \tau)$ is regarded as the sum of the tangle
$(\threeball, t):=(\threeball, t_0\cup t_1\cup t_2)$ in
Figure~\ref{fig:root}(1) with its mirror image.

\begin{Lemma}
\label{lem:simple-1-1}
The tangle $(\threeball, t_0\cup t_1)$ obtained by
removing one of the unknotted arcs, as in Figure~\ref{fig:tangle}, is
hyperbolic with totally geodesic boundary.
To be precise, 
$B^3\setminus(t_0\cup t_1)$
admits a complete hyperbolic structure,
such that $\partial(B^3\setminus(t_0\cup t_1))$
is a totally geodesic surface.
\end{Lemma}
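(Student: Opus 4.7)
The plan is to prove the lemma by analysing the link obtained by doubling the tangle and invoking Thurston's geometrisation theorem. Set $M:=B^3\setminus N(t_0\cup t_1)$ and let $P:=\partial B^3\setminus N(t_0\cup t_1)$ be the $4$-holed $2$-sphere component of $\partial M$. Doubling $(B^3,t_0\cup t_1)$ across $\partial B^3$ produces a two-component link $\tilde L=K\cup U\subset\threesphere$, invariant under an orientation-reversing involution $\sigma$ whose fixed $2$-sphere $\hat S$ is the image of $\partial B^3$. The exterior $E(\tilde L)$ is canonically identified with the double of $M$ along $P$, and $\sigma$ realises the doubling symmetry. By Mostow rigidity, $M$ admits a complete hyperbolic structure with $P$ totally geodesic if and only if $E(\tilde L)$ admits a complete finite-volume hyperbolic structure: in that case $\sigma$ is isotopic to an isometric involution whose fixed surface is totally geodesic and isotopic to $P$, and $M$ is recovered as a fundamental domain. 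Thus the task reduces to showing that $E(\tilde L)$ is hyperbolic.

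Reading from Figure~\ref{fig:root}(1), the component $K$ (the double of the trefoil arc $t_0$) is the connected sum of a trefoil with its mirror image, namely the square knot, while $U$ (the double of the unknotted arc $t_1$) is an unknot; the way $U$ threads through $K$ is dictated by the entanglement of $t_0$ and $t_1$. By Thurston's geometrisation of Haken manifolds, it suffices to verify that $E(\tilde L)$ is irreducible, $\partial$-irreducible, atoroidal, anannular and not Seifert fibered. Non-splittability, irreducibility, $\partial$-irreducibility, and the absence of a Seifert fibration are straightforward to read off from the diagram; the crucial step is atoroidality, with anannularity handled in parallel.

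The hard part will be ruling out essential tori. The companion torus $T_K$ of the square-knot component $K$ separates its two trefoil summands and is essential in $\threesphere\setminus N(K)$, so one must show how the unknot $U$ destroys $T_K$ along with every other candidate torus. In fact the role of the arc $t_1$ in the tangle is precisely to arrange that $U$ is essentially linked with both trefoil summands of $K$, forcing $U$ to pierce $T_K$. To confirm atoroidality in general, I would take any hypothetical essential torus $T\subset E(\tilde L)$, put it in general position with respect to $\hat S$, and use the incompressibility of $T$ together with innermost-disc and isotopy reductions to minimise $|T\cap\hat S|$. Each resulting piece of $T$ in the two balls $B^3_\pm$ cut off by $\hat S$ is a planar surface in $M$ with boundary on $P\cup\partial N(t_0\cup t_1)$, and a combinatorial analysis of such planar surfaces, exploiting the explicit position of $t_0$ and $t_1$ in the diagram, yields either a compression or a $\partial$-compression of $T$, contradicting essentiality. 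The same scheme, applied to annuli in place of tori, establishes anannularity. Granted these verifications, geometrisation supplies the hyperbolic structure on $E(\tilde L)$, and the involution $\sigma$ yields the totally geodesic surface realising $P$.
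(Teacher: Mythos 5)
Your overall strategy --- double the tangle along $\partial\threeball$ and deduce the totally geodesic structure from hyperbolicity of the doubled link via Mostow rigidity --- is exactly the reduction the paper performs, and your identification of the double as (square knot)\,$\cup$\,(unknot) is correct. The problem is that the entire mathematical content of the lemma is the hyperbolicity of that doubled link, and your proposal does not prove it: the decisive steps are announced (``I would take any hypothetical essential torus\dots'', ``a combinatorial analysis \dots yields either a compression or a $\partial$-compression'', ``Granted these verifications'') but never carried out. You correctly flag that the exterior of the square-knot component by itself contains an essential swallow--follow/JSJ torus separating the two trefoil summands, so atoroidality of the link exterior hinges on showing that the unknotted component cannot be isotoped off this torus --- and, more generally, that no essential torus or annulus survives. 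You assert that ``the role of the arc $t_1$ is precisely to arrange'' this, but give no argument; this is exactly where the proof would fail for a badly chosen $t_1$, so it cannot be waved through. As written, the proposal is a plan for a proof rather than a proof.

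For comparison, the paper sidesteps the surface analysis entirely: it observes that the double of $(\threeball, t_0\cup t_1)$ is the pretzel link $P(3,2,-2,-3)$, whose hyperbolicity is part of the classification of arborescent/Montesinos links (Bonahon--Siebenmann; see also Boileau--Zimmermann and Futer--Gu\'eritaud), and then concludes, as you do, from the reflection symmetry. If you want to complete your route without that citation, you must actually execute the innermost-disc analysis of tori and annuli against the doubling sphere --- note that the paper carries out arguments of precisely this flavour, at considerable length, for the larger tangle in Lemmas~\ref{lem:simple-1-2}--\ref{lem:simple-1-6}, which gives a realistic measure of how much work you are deferring. Either supply that analysis or replace it with the pretzel-link identification and the appeal to the classification.
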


\begin{proof}
If we take the double of the tangle,
we obtain the
pretzel link $P(3,2,-2,-3)$,
which is hyperbolic by \cite{BS} (see also \cite{BZ, FG}).
The desired result follows from this fact.
\end{proof}

\begin{figure}[h]
\begin{center}
 {
  \includegraphics[height=5cm]{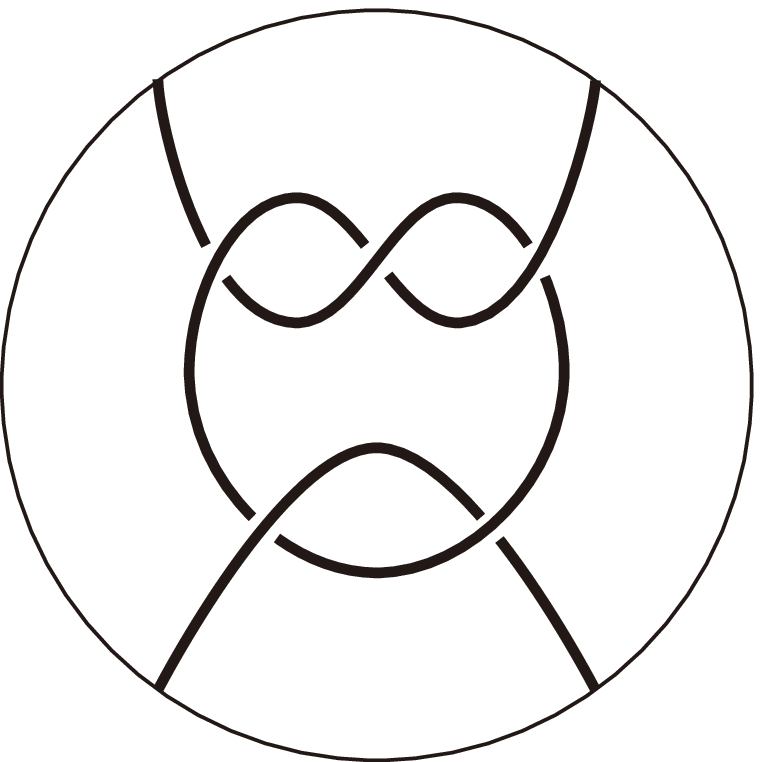}
 }
\end{center}
\caption{The tangle $(\threeball, t_0\cup t_1)$.}
\label{fig:tangle}
\end{figure}

\begin{Lemma}
\label{lem:simple-1-2}
The link, $L'$, obtained as the double of $(\threeball, t_0\cup t_1\cup t_2)$,
is prime and unsplittable.
\end{Lemma}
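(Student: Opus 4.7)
The plan is to derive both properties from Lemma~\ref{lem:simple-1-1} together with the symmetry of the tangle $(B^3, t_0\cup t_1\cup t_2)$ that exchanges the two unknotted arcs $t_1$ and $t_2$. This symmetry descends to $L'$ and, combined with Lemma~\ref{lem:simple-1-1}, shows that removing either of the doubled components $O_j$ from $L'$ (for $j\in\{1,2\}$) yields the pretzel link $P(3,2,-2,-3)$, which is hyperbolic and hence both non-split and prime (a hyperbolic link is anannular, so any sphere meeting it in two points bounds a ball on one side containing only an unknotted arc of the link).

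First I will establish non-splittability. Any putative splitting sphere of $L'=K\cup O_1\cup O_2$ would separate its three components into two non-empty groups, and any such grouping must separate at least one of the pairs $\{K,O_1\}$ or $\{K,O_2\}$; this contradicts the non-splittability of the corresponding sublink $K\cup O_j=P(3,2,-2,-3)$.

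For primality, I will consider a $2$-sphere $S$ meeting $L'$ transversely in exactly two points on a component $C$ and split into two cases.

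Case $C=O_j$ for some $j\in\{1,2\}$: Then $S$ lies in the complement of the non-split link $K\cup O_{3-j}$, whose complement is irreducible; so $S$ bounds on one side a ball $D$ with $D\cap(K\cup O_{3-j})=\emptyset$. Primality of $K\cup O_j$ supplies a ball $B^*$ on one side of $S$ with $B^*\cap(K\cup O_j)$ a single unknotted arc on $O_j$ and $B^*\cap K=\emptyset$. Since only one side of $S$ is disjoint from $K$, the balls $B^*$ and $D$ must coincide. Consequently $B^*\cap L'$ reduces to an unknotted arc of $O_j$, and $S$ is a trivial decomposing sphere for $L'$.

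Case $C=K$: I apply primality of $K\cup O_j$ for each $j=1,2$ to produce, on one side of $S$, a ball $B_j$ with $B_j\cap(K\cup O_j)$ an unknotted arc of $K$ and $O_j\subset S^3\setminus B_j$. If $B_1=B_2$, this shared ball meets $L'$ only in an unknotted arc of $K$, giving a trivial decomposition. Otherwise $B_1$ and $B_2$ are opposite sides of $S$, with $O_2\subset B_1$ and $O_1\subset B_2$; both sides of $S$ then meet $K$ in unknotted arcs, forcing the bridge number of $K$ to be at most one, hence $K$ to be the trivial knot. The main obstacle is therefore to verify that the component $K$ of $P(3,2,-2,-3)$ obtained as the double of the trefoil arc $t_0$ is non-trivial; this can be handled by identifying $K$ directly from the diagram (it is a trefoil, produced by closing the trefoil arc with its mirror image) or by computing its Alexander polynomial from the explicit pretzel presentation.
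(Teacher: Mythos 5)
Your argument is correct, but it takes a genuinely different route from the paper's. The paper observes that $L'$ is $L'':=P(3,2,-2,-3)$ with a parallel copy of its unknotted component added, so that $E(L')$ decomposes as the hyperbolic manifold $E(L'')$ glued to a Seifert piece $P\times S^1$ ($P$ a two-holed disc) along an incompressible torus; irreducibility of the two pieces then gives unsplittability, and the classification of essential annuli in this graph manifold (only saturated annuli in $P\times S^1$, whose boundary slopes are longitudinal rather than meridional) gives primality. You instead work entirely with the two-component sublinks $K\cup O_j\cong P(3,2,-2,-3)$, importing their primality and non-splittability from hyperbolicity (Lemma~\ref{lem:simple-1-1}) and then running elementary sphere and ball-arc-pair arguments: a splitting sphere for $L'$ would split some $K\cup O_j$; a decomposing sphere along $O_j$ is handled by combining primality of $K\cup O_j$ with irreducibility of the complement of the non-split sublink $K\cup O_{3-j}$; and a decomposing sphere along $K$ either bounds a trivial ball on one side or exhibits $K$ as a $1$-bridge, hence trivial, knot. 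Your route is more elementary and self-contained, at the cost of needing to know that the doubled component $K$ is knotted; the paper's route yields, as a by-product, the JSJ decomposition of $E(L')$ along the torus $\partial(P\times S^1)$, which is then exploited in Subcase 1-ii of the proof of Lemma~\ref{lem:simple-1-4}, so it is not wasted effort in context. One small correction: the double of the trefoil arc $t_0$ is not a trefoil but the square knot (the trefoil summed with its mirror image), as the paper itself notes in Subcase 1-iii of Lemma~\ref{lem:simple-1-4}; this does not affect your argument, since all you use is that this knot is non-trivial.
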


\begin{proof}
Observe that the link $L'$ is obtained from the link $L'':=P(3,2,-2,-3)$ by 
adding a parallel copy of the unknotted component.
Thus the exterior $E(L')$ is obtained
from the hyperbolic manifold $E(L'')$ and the Seifert fibered space $P\times 
{\mathbf S}^1$ with $P$ a two-holed disc, by gluing along 
two incompressible toral boundary components. 
Since both $E(L'')$ and $P\times {\mathbf S}^1$ are irreducible
and since the torus 
$\partial P\times {\mathbf S}^1$
is incompressible both in $E(L'')$ and $P\times {\mathbf 
S}^1$, we see that $E(L')$ is irreducible. 
Hence 
$L'$
is unsplittable. Since 
$E(L'')$ admits no essential annuli, we see that the only essential annuli in 
$E(L')$ are saturated annuli contained in $P\times {\mathbf S}^1$ with boundary
components contained in $\partial (P\times {\mathbf S}^1)\setminus \partial 
E(L'')$. Hence 
$L'$
is prime.
\end{proof}

\begin{Lemma}
\label{lem:simple-1-3}
{\rm (1)} Let $\Delta$ be a disc properly embedded in $\threeball$ such that
$\Delta$ is disjoint from $t$. Then $\Delta$ cuts off a $3$-ball in
$\threeball$ disjoint from $t$.

{\rm (2)} Let $\Delta$ be a disc properly embedded in $\threeball$ such that
$\Delta$ intersects $t$ transversely in a single point. Then $\Delta$ cuts off a
$3$-ball, $B$, in $\threeball$ such that $(B, t\cap B)$ is a $1$-string
trivial tangle.
\end{Lemma}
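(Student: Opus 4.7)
I will prove both parts by doubling $\Delta$ across $\partial\threeball$ and invoking the primality and unsplittability of the doubled link $L'$ established in Lemma~\ref{lem:simple-1-2}. Let $\sigma\colon \threesphere\to \threesphere$ denote the involution realising the double, so that $\Fix(\sigma)=\partial\threeball$ and $L'=t\cup\sigma(t)$. After a small isotopy of $\Delta$ rel $t$, the circle $\partial\Delta$ may be assumed disjoint from the endpoints of $t$ on $\partial\threeball$, whence $S:=\Delta\cup\sigma(\Delta)$ is a $2$-sphere in $\threesphere$ meeting $L'$ transversely in $0$ points in case~(1) and in $2$ points on a single component in case~(2). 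Write $B_+,B_-$ for the two $3$-balls bounded by $S$.

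My first step is the equivariance claim that both $B_\pm$ are $\sigma$-invariant: the only alternative is that $\sigma$ swaps them, but then every $\sigma$-fixed point would lie on $S$, forcing $\partial\threeball=\Fix(\sigma)\subseteq S$, which contradicts the fact that $\partial\threeball\cap S=\partial\Delta$ is a single circle. The same dimensional comparison shows that the two discs of $\partial\threeball\setminus\partial\Delta$ are distributed one each between $B_+$ and $B_-$, so $B_\pm\cap\threeball$ are precisely the two $3$-balls into which $\Delta$ partitions $\threeball$.

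Part (1) will then follow at once: since $L'$ is unsplittable and $S$ is disjoint from $L'$, one of the balls $B_\pm$, say $B_+$, is disjoint from $L'$, and $B_+\cap\threeball$ is the desired $3$-ball in $\threeball$ cut off by $\Delta$ and disjoint from $t$. For (2), primality of $L'$ applied to $S$ produces a ball, say $B_+$, for which $B_+\cap L'=A$ is a single arc, unknotted in $B_+$ rel boundary. Since $A$ is $\sigma$-invariant with its endpoints swapped, $\tau:=A\cap\threeball$ is a proper sub-arc of a single strand of $t$ in the $3$-ball $B_h:=B_+\cap\threeball$, with one endpoint at $\Delta\cap t$ and the other on $\partial\threeball$; moreover no other strand of $t$ is trapped in $B_h$, since $B_+\cap L'=A$.

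The remaining task, and the main obstacle, is to upgrade the unknottedness of $A$ in $B_+$ to that of $\tau$ in $B_h$. I plan to do this by applying the equivariant Dehn's lemma \cite[Theorem~5]{MY} to the $\sigma$-invariant unknotted arc $A$ in the $\sigma$-invariant ball $B_+$, producing a $\sigma$-invariant trivialising disc $E\subset B_+$ with $\partial E=A\cup\gamma$ for some arc $\gamma\subset\partial B_+$. The intersection $E\cap\threeball$ is then a properly embedded disc in $B_h$ exhibiting $\tau$ as boundary-parallel, which shows that $(B_h,\tau)$ is a $1$-string trivial tangle.
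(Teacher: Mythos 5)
Your proposal is correct and follows essentially the same route as the paper: the paper's proof also doubles $\Delta$ across $\partial\threeball$ and derives a contradiction with the primality and unsplittability of $L'$ from Lemma~\ref{lem:simple-1-2}. The only difference is one of detail: the paper compresses the descent from a trivial decomposition of the double $(B_+,A)$ back to a trivial tangle $(B_h,\tau)$ into a single sentence (this is the ``standard cut and paste'' step it cites elsewhere as \cite[Criterion 6.1]{S1}), whereas you make it explicit via an equivariant disc for $A$ -- a legitimate filling of that step, provided you carry out the Meeks--Yau argument in the exterior of an invariant neighbourhood of $A$ with a $\sigma$-invariant boundary curve in the compressing slope, so that the resulting disc is forced to be invariant rather than merely part of a disjoint orbit.
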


\begin{proof}
Suppose that there exists a disc, $\Delta$, properly embedded in $\threeball$
such that either $\Delta$ is disjoint from $t$ or $\Delta$ intersects $t$
transversely in a single point, and which does not satisfy the desired
conditions. Then the double of $\Delta$ gives a $2$-sphere in $\threesphere$
which separates the components of $L'$ or gives a nontrivial decomposition of
$L'$. This contradicts Lemma \ref{lem:simple-1-2}.
\end{proof}

Let $D$ be the flat disc forming the right side of $\partial \threeball$ as illustrated in
Figure~\ref{fig:root}(1), and set $K_D=\partial D$.

\begin{Lemma}
\label{lem:simple-1-4}
There does not exist an essential annulus, $A$, in $\threeball\setminus t$ whose
boundary is disjoint from $K_D$. To be precise, any incompressible
annuls $A$ properly embedded in $\threeball\setminus t$ such that
$\partial A\cap K_D=\emptyset$ is parallel in $\threeball\setminus t$ to
(i) an annulus in $\partial\threeball\setminus (K_D\cup t)$,
(ii) the frontier of a regular neighbourhood of $K_D$ or
(iii) the frontier of a regular neighbourhood of a component $t_i$ of $t$.
\end{Lemma}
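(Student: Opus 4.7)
The strategy is to double across the sphere $\partial \threeball$ and then exploit the JSJ structure of $E(L')$ established in Lemma~\ref{lem:simple-1-2}. Let $\sigma$ be the involution of $\threesphere$ whose fixed-point set is the sphere $S$ obtained by doubling $\partial \threeball$, so that $(\threesphere,L')/\sigma = (\threeball,t)$. Starting from an incompressible annulus $A$ properly embedded in $\threeball\setminus t$ with $\partial A\cap K_D=\emptyset$, after a small isotopy I may assume $A$ meets $\partial \threeball$ transversely while preserving the condition on $K_D$. The doubled surface $\hat A := A\cup\sigma(A)\subset E(L')$ is then a closed torus or Klein bottle when $\partial A\subset\partial \threeball$, a disjoint pair of annuli when $\partial A\subset\partial N(t)$, and a single annulus in the mixed case.

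The first step is to show $\hat A$ is incompressible in $E(L')$. If $\hat A$ admitted a compression disc, the equivariant loop theorem of Meeks--Yau \cite{MY} would provide one that is either $\sigma$-invariant or disjoint from its $\sigma$-image; cutting along $S$ in either case yields a compression disc for $A$ in $\threeball\setminus t$, contradicting the hypothesis on $A$.

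By the proof of Lemma~\ref{lem:simple-1-2}, $E(L')$ is the union of the hyperbolic piece $E(L'')$ and the Seifert fibered cable space $P\times \onesphere$ glued along a JSJ torus $T$, and $E(L'')$ contains no essential annuli or tori. Consequently, up to isotopy, $\hat A$ is either boundary-parallel in $E(L')$ or a $\sigma$-invariant saturated subsurface of $P\times \onesphere$ (namely $T$ itself, or a union of vertical annuli). If $\hat A$ is boundary-parallel, the parallelism can be chosen $\sigma$-equivariantly, and descends to a parallelism of $A$ in $\threeball\setminus t$ to the quotient of the corresponding component of $\partial E(L')$; depending on which component is involved, this yields case~(i), (ii), or (iii) of the lemma.

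The main obstacle is to rule out the remaining case, in which $\hat A$ is essential in $P\times \onesphere$; this is precisely where the hypothesis $\partial A\cap K_D=\emptyset$ enters. The cable space encloses the two parallel unknotted components coming from $t_1$ and $t_2$, while $K_D$ is so placed that the disc $D$ separates the endpoints of $t_1\cup t_2$ from those of $t_0$ on $\partial \threeball$. After $\sigma$-equivariantly choosing the Seifert fibration of $P\times \onesphere$, I would analyze each possible essential saturated subsurface (the torus $T$, or a vertical annulus joining two of $\partial N(O_1)$, $\partial N(O_2)$, $T$), and verify case by case that the projection of $\hat A\cap S$ to $\partial \threeball$ is a curve system forced to meet $K_D$. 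This contradicts the hypothesis and completes the proof.
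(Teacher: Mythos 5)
Your global strategy (double everything across $\partial\threeball$ and read off the answer from the JSJ of $E(L')$) is close in spirit to what the paper does in one subcase, but as written it has two genuine gaps. First, the incompressibility of $\hat A=A\cup\sigma(A)$ does not follow from your argument: if the equivariant compressing disc produced by Meeks--Yau is $\sigma$-invariant, cutting it along $S$ yields a \emph{boundary}-compressing disc for $A$ in $\threeball\setminus t$, not a compressing disc, and boundary-compressibility is not excluded by the hypotheses. Indeed it cannot be: the annuli in conclusions (i) and (ii) of the lemma (boundary-parallel annuli and the frontier of $N(K_D)$) are incompressible but boundary-compressible, and their doubles are compressible tori in $E(L')$. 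So the dichotomy ``$\hat A$ incompressible or $A$ compressible'' is false, and you must separately analyse compressible $\hat A$ (and show it leads to cases (i)--(ii)); note also that $\partial E(t)$ is a genus-$3$ handlebody boundary, so the usual ``boundary-compressible incompressible annulus is boundary-parallel'' shortcut is not available for free.

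Second, and more seriously, the case you yourself identify as the main obstacle --- $\hat A$ essential and hence isotopic to the JSJ torus $T_0=\partial(P\times\onesphere)$ or to a vertical annulus --- is not actually disposed of. The isotopy carrying $\hat A$ to a saturated position need not preserve $S$ or the curve $K_D$, so ``the projection of $\hat A\cap S$ to $\partial\threeball$ is forced to meet $K_D$'' does not follow from saturation; controlling that isotopy is precisely where the work lies. The paper handles this (its Subcase 1-ii) by noting that $K_D$ meets the annulus $A_0$ spanned by $O_1'\cup O_2'$ in exactly two points while being disjoint from $\hat A$, extracting from the pairwise isotopy an arc $\beta\subset A_0$ joining those two points, and deriving a contradiction from the linking numbers $\mathrm{lk}(\alpha_j\cup\beta',K_0')$ and $\mathrm{lk}(\alpha_j\cup\beta',O_2')$; nothing in your proposal substitutes for this computation. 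There is also a factual slip that would derail your intended case analysis: the disc $D$ does not separate the endpoints of $t_1\cup t_2$ from those of $t_0$ --- each string $t_i$ has one endpoint in $\interior D$ and one in $D^c$, which is exactly the geometric input the paper's case analysis (via the cylinder $B^2\times I$ bounded by $A$ and the hyperbolicity with totally geodesic boundary of the subtangle $(\threeball,t_0\cup t_1)$) exploits. Also, $P\times\onesphere$ with $P$ a two-holed disc is a composing space, not a cable space.
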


\begin{proof}
Suppose to the contrary that there is an essential annulus, $A$, in
$\threeball\setminus t$ such that $\partial A\cap K_D=\emptyset$. Since $A$ is
compressible in $\threeball$, it bounds a cylinder $B^2\times I$ with
$I=[0,1]$,
where $A=\partial B^2\times I$ and $B^2\times \partial I\subset
\partial\threeball$.

\medskip

{\bf Case 1.} One component of $\partial A$ is contained in $\interior D$ and
the other component is contained in $D^c:=\partial \threeball \setminus D$. Then we may
assume $B^2\times 0\subset \interior D$ and $B^2\times 1\subset D^c$. If $A$ is
compressible in $\threeball \setminus (t_0\cup t_1)$, then the cylinder must contain
$t_2$. Since $t_2$ has an endpoint in $\interior D$ and the other endpoint in
$D^c$, $t_2$ has an endpoint in $B^2\times 0$ and the other endpoint in
$B^2\times 1$. Since $t_2$ is unknotted, this implies $t_2$ forms the core of
the cylinder and hence $A$ is parallel to the frontier of a regular
neighbourhood of $t_2$. This contradicts the assumption that $A$ is essential in
$\threeball\setminus t$. So, we may assume $A$ is incompressible in
$\threeball \setminus (t_0\cup t_1)$. 
Thus Lemma~\ref{lem:simple-1-1} implies that $A$ is parallel in $\threeball\setminus (t_0\cup t_1)$
to (i) the frontier of a regular neighbourhood of $t_0$, (ii) the frontier of
a regular neighbourhood of $t_1$ or (iii) an annulus in
$\partial \threeball \setminus (t_0\cup t_1)$.

\medskip
{\bf Subcase 1-i.}
$A$ is parallel in $\threeball\setminus (t_0\cup t_1)$ to the frontier of a regular
neighbourhood $N(t_0)$ of $t_0$. Since $t_0$ is knotted whereas $t_1$ and $t_2$
are unknotted and since each $t_i$ has an endpoint in $\interior D$ and the
other endpoint in $D^c$, we see that $t_1$ and $t_2$ are not contained in
$N(t_0)$. Hence $A$ is parallel in $\threeball\setminus t$ to the frontier of $N(t_0)$,
a contradiction to the assumption that $A$ is essential.

\begin{figure}[h]
\begin{center}
 {
  \includegraphics[height=5cm]{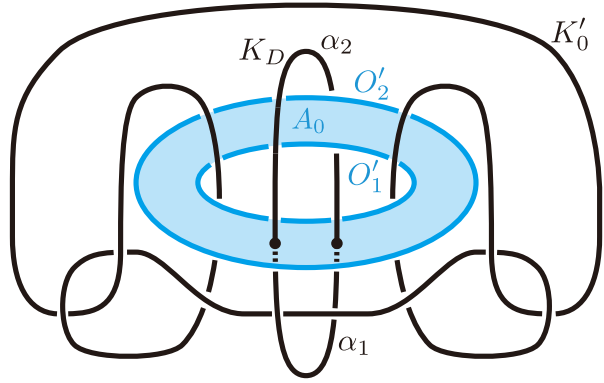}
 }
\end{center}
\caption{The link $L'\cup K_D=(K_0'\cup O_1'\cup O_2')\cup K_D$ 
obtained as the ``double'' of $(\threeball, t\cup K_D)$,
where the components $K_0'$, $O_1'$ and $O_2'$ are the doubles of $t_0$, $t_1$ and $t_2$, respectively,
and the component $K_D$ is a copy of the loop $K_D\subset \partial \threeball$.
The component $K_D$ intersects the annulus $A_0$ bounded by $O'_1$ and $O'_2$ transversely in two points,
which divides $K_D$ into the two arcs $\alpha_1$ and $\alpha_2$.}
\label{fig:L3-KD}
\end{figure}

{\bf Subcase 1-ii.}
$A$ is parallel in $\threeball\setminus (t_0\cup t_1)$ to the frontier of a regular
neighbourhood $N(t_1)$ of $t_1$ in $\threeball\setminus t_0$. If $t_2$ is not contained
in $N(t_1)$, then $A$ is parallel in $\threeball\setminus t$ to the frontier of
$N(t_1)$. So $t_2$ must be contained in $N(t_1)$. Now, let $L'\cup K_D$ be the
link in $\threesphere$ obtained as the ``double'' of $(\threeball, t\cup K_D)$,
and let $T$ be the torus in the exterior $E(L'\cup K_D)$ obtained as the double
of the annulus $A$. Then, by the above observation, $T$ bounds a solid torus,
$V$, in $\threesphere$, such that $V\cap (L'\cup K_D)= O_1'\cup O_2'$, where
$O_i'$ is the double of $t_i$ ($i=1,2$). By the proof of
Lemma~\ref{lem:simple-1-2}, we know that the JSJ decomposition of $E(L')$
is given by the torus $T_0:=\partial E(L'')=\partial (P\times {\mathbf S}^1)$,
and hence $T$ is isotopic to $T_0$ in $E(L')$. Thus there is a
self-diffeomorphism $\psi$ of $(\threesphere, L')$
pairwise isotopic to the identity, which carries $T_0$ to $T$. 
Let $A_0$ be the annulus with $\partial A_0=O_1'\cup O_2'$ as illustrated in
Figure~\ref{fig:L3-KD}. Note that, up to isotopy, $T_0$ is the boundary of a
regular neighbourhood of $A_0$
Note that the component $K_D$
intersects $A_0$ transversely in two points, whereas $K_D$ is disjoint from
$\psi(T_0)=T\subset E(L'\cup K_D)$ and therefore $K_D$ is disjoint from $\psi(A_0)$.
Since $\psi$ is pairwise isotopic to the identity, there is a smooth isotopy
$\Psi:K_D\times [0,1]\to E(L')$ such that $\Psi|_{K_D\times 0}=1_{K_D}$ and
$\Psi(K_D\times 1)=\psi^{-1}(K_D)$. We may assume $\Psi$ is transversal to $A_0$
and so $F^{-1}(A_0)$ is a $1$-dimensional submanifold of  $K_D\times [0,1]$.
Since $K_D\cap A_0$ consists of two transversal intersection points and
since $\psi^{-1}(K_D)\cap A_0=\emptyset$,
the $1$-manifold $\Psi^{-1}(A_0)$ contains precisely one arc component, $\beta$, 
and it joins the two points $(K_D\cap A_0)\times 0$.
Consider the disc, $\delta$, in $K_D\times [0,1]$ cut off by the arc $\beta$.
Since $A_0\cap E(L')$ is incompressible in $E(L')$ and since
$E(L')$ is irreducible, we may assume that
the interior of $\delta$ is disjoint from $\Psi^{-1}(A_0)$. 
Thus the loop $\Psi(\partial \delta)$ is null-homotopic in $E(L')$.
On the other hand, the loop $\Psi(\partial \delta)$ is the union of one of the two subarcs 
$\alpha_1$ and $\alpha_2$ of $K_D$ bounded by $K_D\cap A_0=\partial \beta$
(see Figure~\ref{fig:L3-KD})
and the path $\Psi(\beta)$ in $A_0$
joining the two points $K_D\cap A_0$.
However, we can easily observe that 
$\mathrm{lk}(\alpha_1\cup \beta',K_0')=\pm1$ and
$\mathrm{lk}(\alpha_2\cup \beta',O_2')=\pm1$ for any path $\beta'$ in
$A_0$ with endpoints $K_D\cap A_0$. This is a contradiction.

\medskip

{\bf Subcase 1-iii.}
$A$ is parallel in $\threeball\setminus (t_0\cup t_1)$ to an annulus, $A'$, in
$\partial \threeball \setminus (t_0\cup t_1)$. Then $A$ cuts off a solid torus, $W$,
from $\threeball \setminus (t_0\cup t_1)$. Since $A$ is essential, $W$ must contain the
component $t_2$. This implies that $O_2'$ is null homotopic in
$\threesphere \setminus K_0'$, where $K_0'$ is the double of $t_0$. However, we can
easily check by studying the knot group of the square knot $K_0'$, that this is
not the case. 

\medskip
{\bf Case 2.} Both components of $\partial A$ are contained in $\interior D$.
Since each component of $t_i$ has one endpoint in $\interior D$ and the other
endpoint in $D^c$, we see that the components of $\partial A$ are concentric in
$D$, and that $B^2\times 0$ is contained in $\interior D$ and that
$B^2\times 1$ contains $D^c$ in its interior. Since $t_1$ joins $\interior D$
with $D^c$, $t_1$ joins  $B^2\times 0$ with $B^2\times 1$ in the cylinder
$B^2\times I$. So the cylinder $B^2\times I$ is unknotted in $\threeball$ and
hence the closure of its complement in $\threeball$ is a solid torus, $W$.
Since $\partial B^2\times 0$ is a meridian of the solid torus
$(\threesphere\setminus \interior\threeball)\cup B^2\times I$, we see that
$\partial B^2\times 0$ is a longitude of $W$. So $A$ is parallel to the annulus
$A':=V\cap \partial \threeball$ through $W$. Since every component of $t$ joins
$\interior D$ with $D^c$, $t$ must be contained in the cylinder and hence $W$
is disjoint from $t$. This contradicts the assumption that $A$ is essential.
\end{proof}

\begin{Lemma}
\label{lem:simple-1-5}
The complement $\threeball\setminus t$ does not contain an incompressible torus.
\end{Lemma}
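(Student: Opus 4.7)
The plan is to argue by contradiction: assume $T$ is an incompressible torus in $\threeball \setminus t$, verify that $T$ remains incompressible after doubling along $\partial \threeball$, and derive a contradiction from the JSJ decomposition of $E(L')$ described in the proof of Lemma~\ref{lem:simple-1-2}.

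Since $T$ is a closed surface, it lies in $\interior\threeball \setminus t$ and separates $\threeball$ into two pieces; let $W$ denote the piece that does not meet $\partial\threeball$. Each component $t_i$ is a connected arc with endpoints on $\partial\threeball$ and is disjoint from $T$, so $t_i$ lies entirely in the other piece, giving $W \cap t = \emptyset$. In particular $T = \partial W$ is incompressible from the $W$-side. Every torus in $\threesphere$ bounds a solid torus on at least one side; if that side were $W$ then the meridian disk would compress $T$ inside $\threeball \setminus t$, so $\threesphere \setminus W$ must be the solid torus, and $W \cong E(K)$ for a non-trivial knot $K$ in $\threesphere$.

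Next I would verify that $T$ remains incompressible in $E(L')$. First, $\threeball \setminus t$ is irreducible, since a $2$-sphere in $\interior\threeball$ cannot contain any $t_i$ (whose endpoints lie on $\partial\threeball$), so it bounds a ball disjoint from $t$. Given a compressing disk $D$ for $T$ in $E(L')$, make $D$ transverse to $S_0 := \partial\threeball$ and minimise $|D \cap S_0|$. An innermost subdisk $D_0$ of $D$ cut off by a component $C$ of $D \cap S_0$ lies entirely in one tangle exterior and has $\partial D_0 = C \subset S_0 \setminus L'$; by Lemma~\ref{lem:simple-1-3}(1), $D_0$ cuts off a ball in the tangle exterior disjoint from the arcs, whence $C$ bounds a disk in $S_0 \setminus L'$, and irreducibility then permits an isotopy reducing $|D \cap S_0|$. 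Iterating, $D \cap S_0 = \emptyset$, so $D$ lies in one tangle exterior and is a compressing disk for $T$ in $\threeball \setminus t$, contradicting our assumption.

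By the proof of Lemma~\ref{lem:simple-1-2}, the JSJ decomposition of $E(L')$ consists of a single torus $T_0$ with the hyperbolic piece $E(L'')$ on one side and the Seifert-fibred piece $P \times \onesphere$ on the other, where $P$ is a pair of pants. Every essential simple closed curve in $P$ is $\partial$-parallel, so incompressible tori in $P \times \onesphere$ are boundary-parallel, and hence any incompressible torus in $E(L')$ is isotopic either to $T_0$ or to some $\partial N(L'_i)$. If $T$ is isotopic to $T_0$, then the two complementary pieces would be $E(L'')$ and $P \times \onesphere$, with $2$ and $3$ boundary tori respectively, whereas $W$ has only the single boundary torus $T$ (since $W \cap L' = \emptyset$): contradiction. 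If $T$ is parallel to some $\partial N(L'_i)$, then $T$ and $\partial N(L'_i)$ cobound a $T^2 \times I$ in $E(L')$ disjoint from $L'$, so in $\threesphere$ one side of $T$ contains $L'_i$ while the other contains the remaining components $L'_j$, $j \neq i$; since $L'$ has three components, both sides meet $L'$, again contradicting $W \cap L' = \emptyset$. The principal obstacle is the innermost-disk argument in the second paragraph, which must carefully combine Lemma~\ref{lem:simple-1-3}(1) with the irreducibility of the tangle exterior; the concluding JSJ dichotomy is then essentially bookkeeping.
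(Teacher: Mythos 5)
Your proof is correct in outline, but it takes a genuinely different and much longer road than the paper's. The paper disposes of this lemma in one line: the exterior $E(t)$ of the tangle in $\threeball$ is a genus $3$ handlebody (the trefoil arc exterior with the two parallel unknotting tunnels $t_1,t_2$ drilled out), and a handlebody contains no closed incompressible surface of positive genus, since such a surface would be $\pi_1$-injective while $\pi_1$ of a handlebody is free and so contains no copy of $\Z^2$. Your route --- realise the torus as the boundary of a knot exterior $W$ disjoint from $t$ and from $\partial\threeball$, pass to the double $L'$, show incompressibility survives the doubling via Lemma~\ref{lem:simple-1-3}(1), then rule out every incompressible torus of $E(L')$ using the JSJ decomposition from the proof of Lemma~\ref{lem:simple-1-2} --- does reach the same conclusion, and the concluding dichotomy is sound ($W$ has a single boundary torus and misses $L'$, which is incompatible with $T$ being isotopic either to $T_0$ or to a peripheral torus). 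What it costs you is exactly the step you flag: in the innermost-circle swap, the disk $E_0\subset S_0\setminus L'$ bounded by $C$ may still meet $D$ in its interior, so the replacement $(D\setminus D_0)\cup E_0$ need not be embedded; one must either iterate more carefully or argue via $\pi_1$ (Lemma~\ref{lem:simple-1-3}(1) plus the loop theorem show the planar surface $S_0\cap E(L')$ is $\pi_1$-injective in each tangle exterior, so $\pi_1(E(t))$ injects into $\pi_1(E(L'))$ and incompressibility of $T$ persists). Note, finally, that once you know $W$ is a nontrivial knot exterior embedded in $\threeball\setminus t$ you are already done without doubling: $\Z^2\cong\pi_1(T)$ would inject into the free group $\pi_1(E(t))$. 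The handlebody observation is worth keeping in mind, as the paper reuses it for the tangle of Section~\ref{section:link3}.
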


\begin{proof}
This follows from the easily observed fact that the exterior $E(t)$ is
homeomorphic to a genus $3$ handlebody. Indeed, the exterior of $t$ in
$\threeball$ is the trefoil knot exterior with two parallel unkotting tunnels
drilled out.
\end{proof}

\begin{Lemma}
\label{lem:simple-1-6}
The tangle $(\threeball,\tau)$ is simple in the following sense.
\begin{enumerate}
\item Let $\Delta$ be a disc properly embedded in $\threeball$ such that
$\Delta$ is disjoint from $\tau$. Then $\Delta$ cuts off a $3$-ball in
$\threeball$ disjoint from $\tau$.
\item Let $\Delta$ be a disc properly embedded in $\threeball$ such that
$\Delta$ intersects $\tau$ transversely in a single point. Then $\Delta$ cuts
off a $3$-ball, $B$, in $\threeball$ such that $(B, \tau\cap B)$ is a
$1$-string trivial tangle.
\item There does not exist an essential annulus, $A$, in $\threeball\setminus \tau$.
To be precise, any incompressible annuls $A$ properly embedded in
$\threeball\setminus \tau$ is parallel in $\threeball\setminus \tau$ to
(i) an annulus in $\partial\threeball\setminus \tau$,
(ii) the frontier of a regular neighbourhood of a component $\tau_i$ of $\tau$.
\item $\threeball\setminus \tau$ does not contain an incompressible torus.
\end{enumerate}
\end{Lemma}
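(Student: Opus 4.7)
The plan is to reduce each of the four assertions to the corresponding property of $(\threeball, t)$ established in Lemmas~\ref{lem:simple-1-3}, \ref{lem:simple-1-4} and \ref{lem:simple-1-5}, via the splitting disc $D^\ast$ that presents $(\threeball, \tau)$ as the sum of two copies $(B^+, t^+)$ and $(B^-, t^-)$ of $(\threeball, t)$. The disc $D^\ast$ meets $\tau$ transversely in the three points where the strings of $t$ meet the flat face $D$, and its boundary $\partial D^\ast$ coincides (in each copy) with the curve $K_D$ of Lemma~\ref{lem:simple-1-4}.

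For (1), (2) and (4) I would carry out a standard innermost-disc, outermost-arc analysis on the intersection of the given surface $\Delta$, $A$ or $T$ with $D^\ast$, after minimising it transversely. An innermost subdisc of $D^\ast$ bounded by the intersection is either a compression (excluded when the surface is incompressible) or, together with a subdisc of the surface, cuts off a region through which an isotopy reduces the intersection, thanks to Lemma~\ref{lem:simple-1-3}(1). Dually, an innermost disc or outermost arc in the surface gives a subdisc properly embedded in one $B^\pm$, disjoint from $t^\pm$ (or meeting it in a single point in case (2)), and Lemma~\ref{lem:simple-1-3}(1)--(2) furnishes an isotopy reducing the intersection. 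Once the surface is disjoint from $D^\ast$ it lies in a single $B^\pm$: Lemma~\ref{lem:simple-1-3} completes (1) and (2), while a compressing disc for $T$ in $B^\pm \setminus t^\pm$ guaranteed by Lemma~\ref{lem:simple-1-5} remains a compressing disc in $\threeball \setminus \tau$, completing (4).

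Assertion (3) is the delicate case. After the analogous reductions, either $A$ becomes disjoint from $D^\ast$, or $A\cap D^\ast$ consists of simple closed curves and properly embedded arcs that are essential both in $A$ and in the thrice-punctured disc $D^\ast \setminus \tau$. In the first case $A$ lies in one $B^\pm$ and $\partial A$ can be isotoped off $K_D$, so Lemma~\ref{lem:simple-1-4} applies: type (i) gives boundary-parallelism in $(\threeball,\tau)$, i.e.\ conclusion (i) of the present lemma; type (ii), parallelism to the frontier of $N(K_D)$, is excluded because such a frontier annulus crosses $D^\ast$ while $A$ does not; and type (iii) gives parallelism to the frontier of $N(\tau_j)$ where $\tau_j$ contains $t_i^\pm$, which is conclusion (ii). In the second case the pieces of $A$ cut by $D^\ast$ are essential planar subsurfaces of $B^\pm \setminus t^\pm$; doubling along $D^\ast$ produces an essential surface in $\threesphere \setminus L'$ of Lemma~\ref{lem:simple-1-2}, whose known JSJ structure forbids such a configuration.

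The main obstacle will be this last configuration of (3), in which $A$ essentially meets $D^\ast$: ruling it out requires matching the incompressible pieces coming from Lemma~\ref{lem:simple-1-4} (or its extension to planar surfaces with more than two boundary components) across $D^\ast$, and verifying that any non-trivial matching contradicts either the totally geodesic hyperbolic structure on $\threeball \setminus (t_0 \cup t_1)$ provided by Lemma~\ref{lem:simple-1-1} or the explicit JSJ description of $E(L')$ used in Lemma~\ref{lem:simple-1-2}. The other three assertions should be essentially routine once the surface has been pushed off $D^\ast$.
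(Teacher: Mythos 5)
Your proposal matches the paper's own proof in both decomposition and method: the paper simply observes that $(\threeball,\tau)$ is obtained from $(\threeball,t)$ and its mirror image by identifying $D$ with its copy, and then invokes the standard cut-and-paste method (citing \cite[Criterion 6.1]{S1}) together with Lemmas~\ref{lem:simple-1-2}--\ref{lem:simple-1-5} --- exactly the splitting disc, the auxiliary lemmas, and the innermost-disc/outermost-arc scheme you describe. The residual configuration you flag in assertion (3), where the annulus meets the splitting disc essentially, is precisely what that cited criterion is designed to dispatch, so the paper supplies no more detail there than you do.
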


\begin{proof}
Note that $(\threeball,\tau)$ is obtained from $(\threeball,t)$ and its mirror
image by identifying $D$ with its copy in the mirror image. Thus the assertion
follows from Lemmas~\ref{lem:simple-1-2}-\ref{lem:simple-1-5} by using the
standard cut and paste method (see \cite[Criterion 6.1]{S1}).
\end{proof}

Since $(\threesphere,L_2)$ is the double of the simple tangle
$(\threeball,\tau)$, 
we see that $\threesphere\setminus L_2$ is atoroidal, i.e, it does not contain 
an essential torus.
Moreover, $\threesphere\setminus L_2$ is not a Seifert fibred space.
This can be seen as follows.
If it were a Seifert fibred space, then
it should be a $2$-fold composing space, 
i.e., homeomorphic to the complement of the connected sum of two Hopf links.
Now we use the fact that for a given link $L$,
the greatest common divisor, $d(L)$, 
of the linking numbers of the components is an
invariant of the link complement.
Since $d(\mbox{the connected sum of two Hopf links})=1$ and $d(L_2)=0$,
$\threesphere\setminus L_2$ cannot be a Seifert fibred space.
Hence, by Thurston's uniformization theorem for Haken manifolds,
$\threesphere \setminus  L_2$ admits a complete hyperbolic structure,
i.e., $L_2$ is hyperbolic.


\section{The link $L_3$ is hyperbolic}
\label{section:link3}


In this section, we prove that the link $L_3$ is hyperbolic. We begin with the
following lemma.

\begin{Lemma}
\label{lem:no-separating-two-sphere}
There does not exist a $2$-sphere, $S$, in $\threesphere$ satisfying the
following conditions.
\begin{enumerate}
\item $S$ is disjoint from $K_0$.
\item Either $S$ is disjoint from $O_1$ or $S$ intersects $O_1$ transversely in
two points.
\item $S$ separates $O_2$ from $O_3$, i.e., $O_2$ and $O_3$ are contained in
distinct components of $\threesphere\setminus S$.
\end{enumerate}
\end{Lemma}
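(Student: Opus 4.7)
The plan is to argue by contradiction. Suppose such a 2-sphere $S$ exists, and let $B^+,B^-$ be the two closed 3-balls bounded by $S$ in $\threesphere$, with $O_2\subset B^+$ and $O_3\subset B^-$. The free involution $f=\gamma_1\gamma_2$ preserves $L_3$, stabilises $K_0$ and $O_1$ setwise, and interchanges $O_2$ and $O_3$, so for each $C\in\{K_0,O_1\}$ one has $|\mathrm{lk}(C,O_2)|=|\mathrm{lk}(C,O_3)|$.

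The first step is to settle the case $S\cap O_1=\emptyset$. Here $S$ is disjoint from $K_0\cup O_1$, so each of $K_0,O_1$ lies entirely in one of the balls $B^\pm$. Since a closed curve in $B^-$ has zero linking number with any closed curve in $B^+$ (each bounds a surface in its own ball disjoint from the other), at least one of $\mathrm{lk}(C,O_2)$ and $\mathrm{lk}(C,O_3)$ vanishes for each $C\in\{K_0,O_1\}$; combined with the $f$-symmetry above, both vanish. The plan is then to read from Figure~\ref{fig:positive} that $\mathrm{lk}(K_0,O_2)\ne 0$ in $L_3$ (the analogous statement for $C=O_1$ is automatic from the triviality of $O_1\cup O_2\cup O_3$), yielding the desired contradiction.

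The second step is to reduce the case $|S\cap O_1|=2$ to the previous one via a standard disc-surgery argument. Let $D_1\subset\threesphere$ be a spanning disc for the unknot $O_1$, in general position with respect to $S$; then $D_1\cap S$ consists of finitely many disjoint circles together with exactly one arc $\alpha$, whose endpoints are $O_1\cap S$. Innermost-circle surgeries on $D_1$, using that $S$ bounds 3-balls on either side of $\threesphere$, eliminate the circles; afterwards $\alpha$ cuts $D_1$ into two subdiscs $D_1^+\subset B^+$ and $D_1^-\subset B^-$. A small push of $S$ across $D_1^-$ then produces a new 2-sphere $S'$, disjoint from $O_1$, still disjoint from $O_2\cup O_3$, and still separating them; the first step applied to $S'$ yields the contradiction.

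The main obstacle is twofold. First, the crucial nonvanishing $\mathrm{lk}(K_0,O_2)\ne 0$ must be checked directly from the diagram of $L_3$ in Figure~\ref{fig:positive}; this is a concrete combinatorial inspection. Second, the push-across in the second step must be arranged so that $S'$ remains disjoint from $K_0$, which is not automatic since the spanning disc $D_1$ of $O_1$ need not be disjoint from $K_0$. The remedy is to choose $D_1$ minimising $|D_1\cap K_0|$ in its relative isotopy class and to perform further innermost-disc reductions relative to $K_0$, so that the subdisc $D_1^-$ used in the isotopy (or $D_1^+$, whichever works better) avoids $K_0$. An equivalent alternative is to work directly in the link exterior $E(L_3)$, treating $S\cap E(L_3)$ as a planar surface with meridional boundary on $\partial N(O_1)$ and boundary-compressing it with standard methods; either route reduces the lemma to the linking-number computation of the first step.
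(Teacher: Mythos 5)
Your reduction in the second step (eliminating the two intersection points of $S$ with $O_1$ by surgering along a spanning disc) is plausible, but the argument collapses at the first step: the crucial claim $\mathrm{lk}(K_0,O_2)\ne 0$ is false. For $L_3$ the component $K_0$ is a circle contained in the $2$-sphere $\Fix(\gamma_1)$, and $O_2$ meets that sphere transversely in exactly two points, namely the endpoints of the arc $t_2=O_2\cap\threeball$, both lying in the disc $D\subset\Fix(\gamma_1)$ bounded by $K_0$. Travelling along $O_2$ one crosses $D$ once in each direction, so the algebraic intersection number of $O_2$ with the spanning disc $D$ of $K_0$ is zero, i.e.\ $\mathrm{lk}(K_0,O_2)=0$ (and likewise $\mathrm{lk}(K_0,O_3)=0$). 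Since moreover $O_1\cup O_2\cup O_3$ is a trivial link, every linking number of $O_2$ (and of $O_3$) with the remaining components of $L_3$ vanishes. Thus no contradiction can be extracted from linking numbers in $\threesphere$: the obstruction to separating $O_2$ from $O_3$ is of higher order, much as for the Borromean rings.

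This is precisely why the paper's proof does not stay in $\threesphere$: it passes to the $\Z/2\Z\oplus\Z/2\Z$ cover of $\threesphere$ branched over the Hopf link $K_0\cup O_1$ (again a $3$-sphere). A sphere $S$ satisfying conditions (1)--(3) lifts to two or four disjoint $2$-spheres, each separating every component of $\tilde O_2$ from every component of $\tilde O_3$; but upstairs any component of $\tilde O_2$ and any component of $\tilde O_3$ form a Hopf link, so they have nonzero linking number and cannot be separated. If you want to salvage your strategy you must replace the linking-number computation by an invariant that survives the vanishing of all pairwise linking numbers — e.g.\ the linking numbers in this branched cover, or a Milnor-type triple linking argument — rather than a direct inspection of Figure~\ref{fig:positive}.
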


\begin{proof}
Suppose to the contrary that there is a $2$-sphere $S$ satisfying the
conditions. Let $\branchedcover$ be the $\Z/2\Z\oplus\Z/2\Z$-covering of
$\threesphere$ branched over the Hopf link $K_0\cup O_1$, and let $\tilde S$
and $\tilde O_i$, respectively, be the inverse image of $S$ and $O_i$ in
$\branchedcover$. Then $\tilde O_2\cup \tilde O_3$ is the link in the
$3$-sphere $\branchedcover$ as illustrated in Figure~\ref{fig:Branched-Cover}.
Observe that any component of $\tilde O_2$ and any component of $\tilde O_3$
form a Hopf link. On the other hand, the inverse image of $S$ in
$\branchedcover$ consists of four or two mutually disjoint $2$-spheres, each of
which separates a component of $\tilde O_2$ from each component of 
$\tilde O_3$. This is a contradiction.
\end{proof}

\begin{figure}[h]
\begin{center}
 {
  \includegraphics[height=10cm]{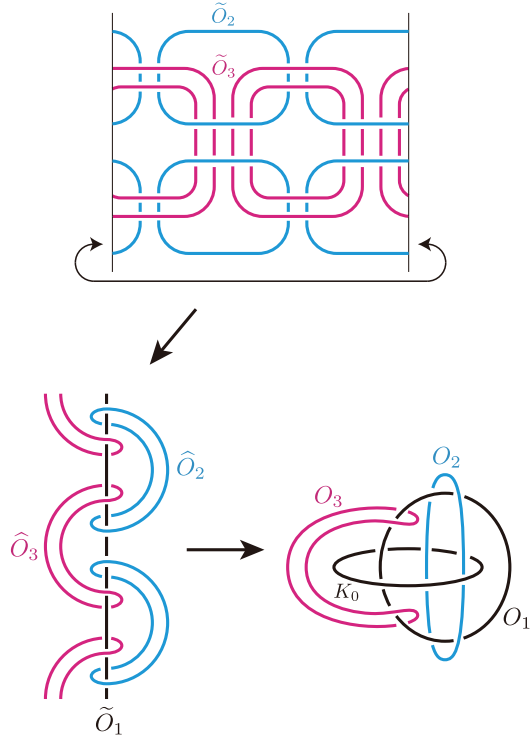}
 }
\end{center}
\caption{The bottom left drawing illustrates 
the double cover, $M_2(K_0)$, of $\threesphere$
branched along $K_0$ and the inverse images of $\OO_3=O_1\cup O_2\cup O_3$.
The $\Z/2\Z\times \Z/2\Z$-cover, $M_{2\oplus 2}(K_0\cup O_1)$, of $\threesphere$ branched along the 
Hopf link $K_0\cup O_1$ is obtained as the double cover of $M_2(K_0)$
branched along the inverse image of $O_1$.
The drawing on the top illustrates the inverse image of $O_2\cup O_3$
in $M_{2\oplus 2}(K_0\cup O_1)$,
where the left and right sides are glued together.}
\label{fig:Branched-Cover}
\end{figure}

Let $\threeball$ be one of the $3$-balls in $\threesphere$ bounded by
$\twosphere$, and set $t_i:=O_i\cap \threeball$ ($i=1,2,3$). Then the pair
$(\threeball,t)$ with $t=t_1\cup t_2\cup t_3$ is a $3$-string tangle, and $K_0$
is a loop in $\partial\threeball$. Let $D$ and $D'$ be the $2$-discs in
$\partial \threeball$ bounded by $K_0$ which contains $\partial t_2$ and
$\partial t_3$, respectively.

\begin{Lemma}
\label{lem:no-compressing-disk}
The three-punctured disc $D\setminus t$ is incompressible in 
$\threeball\setminus t$. Namely, there does not exist a disc, $\Delta$, 
properly embedded in $\threeball$ satisfying the following conditions.
\begin{enumerate}
\item $\partial \Delta$ is a loop in $\interior(D\setminus t)$ which does not 
bound a disc in $\interior(D\setminus t)$.
\item $\Delta$ is disjoint from $t$.
\end{enumerate}
\end{Lemma}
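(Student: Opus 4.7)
The plan is to argue by contradiction. Suppose such a compressing disc $\Delta$ exists; let $\Delta' \subset \interior D$ be the subdisc of $D$ bounded by $\partial\Delta$, and form the $2$-sphere $S = \Delta \cup \Delta' \subset \threesphere$. Since $\partial\Delta \subset \interior(D\setminus t)$, the sphere $S$ is disjoint from $K_0 = \partial D$. Recall that $D$ contains exactly three points of $\partial t$: the two endpoints of $t_2$ and the endpoint of $t_1$ lying in $D$. Set $n := |\Delta' \cap t|\in\{0,1,2,3\}$.

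For $n=0$, the curve $\partial\Delta$ bounds $\Delta'$ inside $D\setminus t$, contradicting essentiality. For $n=1$, $n=3$, or $n=2$ with one of the two punctures being the $t_1$-endpoint, the sphere $S$ meets some component $O_i$ of $L_3$ transversely in an odd number of points, contradicting the vanishing of the algebraic intersection $[S]\cdot [O_i]$ forced by $H_2(\threesphere;\Z) = 0$.

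The essential case is $n=2$ with both punctures being the endpoints $p,q$ of $t_2$. Here $S$ is disjoint from $K_0, O_1, O_3$ and meets $O_2$ transversely in $\{p,q\}$. Let $B_1$ denote the $3$-ball bounded by $S$ with $\Delta'\subset \partial B_1$, and $B_2$ the complementary ball. A short transversality argument, using $\Delta\cap t = \emptyset$ and the fact that the complementary arc $O_2\setminus \interior t_2$ lies in $\threesphere\setminus \threeball \subset B_2$, shows that $t_2\subset B_1$, while $t_1$, $t_3$ and $K_0$ all lie in $B_2$.

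To finish, I would exploit the $\gamma_1$-symmetry of $L_3$. By the construction of Subsection~\ref{ss:l3}, $L_3$ is the double of the tangle $(\threeball, t)$ across $\partial\threeball = \Fix(\gamma_1)$, so each component of $L_3$ is $\gamma_1$-invariant and $O_i = t_i \cup \gamma_1(t_i)$ for $i=1,2,3$. Since $\partial\Delta\subset \partial\threeball$ is pointwise fixed by $\gamma_1$, the union $\tilde S := \Delta\cup\gamma_1(\Delta)$ is an embedded $2$-sphere in $\threesphere$. Moreover $\tilde S$ bounds the ball $\hat B_1 := B_1\cup \gamma_1(B_1)$, which contains exactly $t_2\cup \gamma_1(t_2) = O_2$ of the link, and the complementary ball $\hat B_2$, which contains $K_0\cup O_1\cup O_3$. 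Hence $\tilde S$ is a $2$-sphere disjoint from $K_0\cup O_1$ that separates $O_2$ from $O_3$, contradicting Lemma~\ref{lem:no-separating-two-sphere}.

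The main obstacle is precisely this final case $n=2$ with both punctures on $t_2$, where the elementary algebraic-intersection argument no longer produces a contradiction; the resolution is the $\gamma_1$-doubling construction, which upgrades the compressing disc to a sphere meeting the hypotheses of Lemma~\ref{lem:no-separating-two-sphere}.
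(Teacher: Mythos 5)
Your proof is correct and follows essentially the same route as the paper's: both reduce to the configuration where $t_2$ lies inside the ball cut off by $\Delta$ while $t_1$, $t_3$, and $K_0$ lie outside, and both conclude by doubling $\Delta$ across $\Fix(\gamma_1)$ to produce a sphere violating Lemma~\ref{lem:no-separating-two-sphere}. The only cosmetic difference is that you rule out the other puncture configurations by a parity/homology count, whereas the paper does so directly by noting that $t_1$ and $t_3$ each have an endpoint in $\interior D'$ and hence cannot enter the ball bounded by $\Delta\cup D_\Delta$.
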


\begin{proof}
Suppose to the contrary that there exists a disc $\Delta$ satisfying the
conditions. Let $D_{\Delta}$ be the disc in $D$ bounded by $\partial\Delta$,
and let $B$ be the $3$-ball in $\threeball$ bounded by the $2$-sphere
$\Delta\cup D_{\Delta}$. Since each of $t_1$ and $t_3$ has an endpoint in
$\interior D'\subset \partial\threeball\setminus D_{\Delta}$ and since it is disjoint
from $\Delta$, both $t_1$ and $t_3$ are contained in the complement of $B$.
So, by the second condition, $t_2$ has an endpoint in $\interior D_{\Delta}$,
and therefore $t_2$ is contained in $B$. Hence, by taking the double of
$\Delta$, we obtain a $2$-sphere in $\threesphere$ satisfying the conditions in
Lemma~\ref{lem:no-separating-two-sphere}, which is a contradiction.
\end{proof}

\begin{Lemma}
\label{lem:indivisible-tangle}
There does not exist a disc, $\Delta$, properly embedded in $\threeball$
satisfying the following conditions.
\begin{enumerate}
\item $\partial \Delta$ is a loop in $\interior(D\setminus t)$.
\item $\Delta$ intersects $t$ transversely in a single point.
\item $\Delta$ does not cut off a $1$-string trivial tangle from
$(\threeball,t)$.
\end{enumerate}
\end{Lemma}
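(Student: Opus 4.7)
The plan is to argue by contradiction, assuming such a $\Delta$ exists. I would first carry out a parity count on $\Delta\cap t_3$: since both endpoints of $t_3$ lie in $D'$ while $\partial\Delta\subset\interior D$, $t_3$ meets the sphere $\partial B=\Delta\cup D_\Delta$ (where $B$ is the $3$-ball bounded by $\Delta\cup D_\Delta$ and $D_\Delta\subset D$ is the subdisc with $\partial D_\Delta=\partial\Delta$) in an even number of points, forcing $|\Delta\cap t_3|=0$; hence the unique point of $\Delta\cap t$ lies on $t_1$ or $t_2$. Bookkeeping of which endpoints of $t_1,t_2$ can lie in $D_\Delta$ then yields exactly three configurations: \emph{(B2)} $\Delta$ meets $t_1$ and $D_\Delta$ contains both endpoints of $t_2$ (together with the $D$-endpoint of $t_1$), so $B\cap t=t_2\sqcup(\text{arc of }t_1)$; \emph{(A1)} $\Delta$ meets $t_2$, $D_\Delta$ contains one endpoint of $t_2$ and no endpoint of $t_1$, and $B\cap t$ is a single arc of $t_2$; \emph{(B1)} $\Delta$ meets $t_1$, $D_\Delta$ contains only the $D$-endpoint of $t_1$, and $B\cap t$ is a single arc of $t_1$.

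Doubling $(\threeball, t)$ across $\partial\threeball$ recovers $(\threesphere, L_3)$, and the double of $\Delta$ is a $2$-sphere $S$ disjoint from $K_0$ bounding the $3$-ball $\hat B=B\cup_{D_\Delta}\bar B$. In configuration \emph{(B2)}, $S$ meets $O_1$ transversely in two points, is disjoint from $K_0,O_2,O_3$, and separates $O_2\subset\hat B$ from $O_3\subset\hat B':=\threesphere\setminus\hat B$; this is exactly the configuration forbidden by Lemma~\ref{lem:no-separating-two-sphere}, producing a contradiction.

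The main difficulty lies in configurations \emph{(A1)} and \emph{(B1)}. Writing $i\in\{1,2\}$ for the index of the strand met by $\Delta$, the sphere $S$ meets only $O_i$ (in two points), and $\hat\alpha:=\hat B\cap L_3$ is a single arc of $O_i$. I would apply equivariant Dehn's lemma to the natural $\Z/2$-involution of $\hat B$ (with fixed set $D_\Delta$, arising from the doubling across $\twosphere$) to show that $\hat\alpha$ is $\partial$-parallel in $\hat B$ if and only if $B\cap t$ is $\partial$-parallel in $B$; condition (3) then forces $\hat\alpha$ to be knotted in $\hat B$. Since $O_i$ is a component of the trivial link $\OO_\mu$, it is an unknot in $\threesphere$, and the unknot is prime; hence the complementary arc $\hat\alpha':=\hat B'\cap O_i$ must be $\partial$-parallel in $\hat B'$ (viewed as a $1$-string tangle apart from the other link components), realised by an embedded disc $D_0'\subset\hat B'$ with $\partial D_0'=\hat\alpha'\cup\beta'$ for some $\beta'\subset S$. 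An ambient isotopy of $\threesphere$ supported in a regular neighbourhood of $D_0'$ carries $O_i$ to the closed curve $\hat\alpha\cup\beta'$; although this isotopy may drag the other components of $L_3$, it preserves the knot type of $O_i$. By the classical fact that an arc in a $3$-ball is $\partial$-parallel if and only if its closure with a boundary arc is the unknot, $\hat\alpha\cup\beta'$ is non-trivial in $\hat B\subset\threesphere$ precisely because $\hat\alpha$ is not $\partial$-parallel, contradicting the unknottedness of $O_i$. The most delicate step will be the equivariant argument identifying $\partial$-parallelism of $\hat\alpha$ in $\hat B$ with that of $B\cap t$ in $B$, which rests on equivariant Dehn's lemma for the reflection swapping $B$ and $\bar B$; the remainder uses only standard facts about arcs in balls and primality of the unknot.
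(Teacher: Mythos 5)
Your proposal is correct and follows essentially the same route as the paper: a parity count eliminates $t_3$, the configuration in which $B$ swallows all of $t_2$ is killed by doubling $\Delta$ and invoking Lemma~\ref{lem:no-separating-two-sphere}, and the remaining configurations are shown to violate condition (3) because the single arc $B\cap t$ is an unknotted summand of an unknotted arc. The only difference is cosmetic: where you double to $(\threesphere,O_i)$ and argue via primeness of the unknot together with an equivariant-Dehn/isotopy detour, the paper simply observes that $t_i$ is an unknotted arc in $\threeball$, so the piece cut off by a disc meeting it once is a trivial $1$-string tangle -- the same standard fact, applied one doubling earlier.
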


\begin{proof}
Suppose to the contrary that there exists a disc $\Delta$ satisfying the
conditions. As in the proof of Lemma \ref{lem:no-compressing-disk}, let
$D_{\Delta}$ be the disc in $D$ bounded by $\partial\Delta$, and let $B$ be the
$3$-ball in $\threeball$ bounded by $\Delta\cup D_{\Delta}$. Since the
endpoints of $t_3$ are contained in
$\interior D'\subset \partial\threeball\setminus D_{\Delta}$ and since $t_3$ intersects
$\Delta$ in at most one point, we see that $t_3$ is contained in the
complement of $B$. So, precisely one of  $t_1$ or $t_2$ intersects $\Delta$.
Suppose that $\Delta$ intersects $t_1$. Then, since $t_1$ is an unknotted arc,
the third condition implies that the $3$-ball $B$ must contain $t_2$.
Hence, by taking the double of $\Delta$, we obtain a $2$-sphere in
$\threesphere$ satisfying the conditions in
Lemma~\ref{lem:no-separating-two-sphere}, which is a contradiction.
Hence $t_1$ is disjoint from $\Delta$ and $t_2$ intersect $\Delta$ transversely
in a single point. Since both $t_1$ and $t_3$ are disjoint from $B$ and since
$t_2$ is an unknotted arc, we see that $(B,t\cap B)=(B,t_2\cap B)$ is a
trivial $1$-string tangle. This contradicts the assumption that $\Delta$
satisfies the third condition.
\end{proof}

Let $\delta_2$ and $\delta_3$ be the discs in $\threeball$ as illustrated in
Figure~\ref{fig:two-disks}.

\begin{figure}[h]
\begin{center}
 {
\includegraphics[height=5cm]{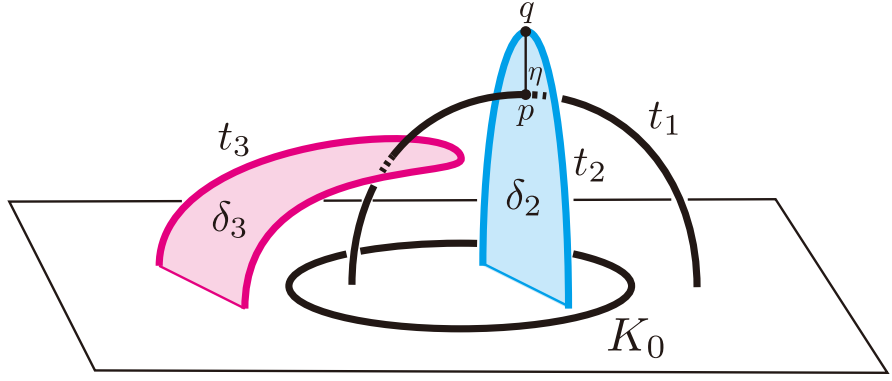}
 }
\end{center}
\caption{The discs $\delta_2$ and $\delta_3$ and the arc $\eta$.}
\label{fig:two-disks}
\end{figure}

\begin{Lemma}
\label{lem:no-boudary-compressing-disk}
There does not exist a disc, $\Delta$, in $\threeball$ satisfying the following
conditions.
\begin{enumerate}
\item $\partial \Delta=\alpha\cup\beta$, where $\alpha$ is an arc properly
embedded in $\delta_2\setminus t$ and $\beta$ is an arc in $\interior(D\setminus t)$ such that
$\beta\cap \delta_2=\partial\beta=\partial\alpha$.
\item $\Delta$ is disjoint from $t$.
\item The disc $\Delta$ is nontrivial in the following sense. 
Let
$\delta_{2,\Delta}$ be the subdisc of $\delta_2$ bounded by
$\alpha\cup \alpha'$, where $\alpha'$ is the subarc of $\delta_2\cap D$ bounded
by $\partial\alpha$. Then the disc $\Delta\cup \delta_{2,\Delta}$ does not cut
off a $3$-ball in $\threeball$ which is disjoint from $t$.
\end{enumerate}
\end{Lemma}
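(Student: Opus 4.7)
The plan is to argue by contradiction, using the hypothetical $\Delta$ to build a properly embedded disc $\Delta'$ in $\threeball$ with boundary in $\interior(D\setminus t)$ and controlled number of intersections with $t$, so that one of the three preceding lemmas applies.

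First I would form the capped disc $\Delta':=\Delta\cup\delta_{2,\Delta}$ and push it slightly off $\delta_2$; after a small isotopy of $\beta$ and $\alpha'$ supported in a neighbourhood of their common endpoints, $\Delta'$ is properly embedded in $\threeball$ with $\partial\Delta'\subset\interior(D\setminus t)$. Since $\Delta$ itself is disjoint from $t$, every intersection of $\Delta'$ with $t$ comes from $\delta_{2,\Delta}\cap t$. Inspecting Figure~\ref{fig:two-disks}, $\delta_2$ meets $t$ only in (at most) the two endpoints of the unknotted strand $t_2$ lying in $D$, so $|\Delta'\cap t|\in\{0,1,2\}$, depending on how many of these points lie in $\delta_{2,\Delta}$. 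I would then analyse each case.

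In the case $|\Delta'\cap t|=0$, Lemma~\ref{lem:no-compressing-disk} forces $\partial\Delta'$ to bound a subdisc $D_0\subset\interior(D\setminus t)$; then $\Delta'\cup D_0$ is a $2$-sphere in $\threeball$ disjoint from $t$, which (by a standard cut-and-paste using Lemma~\ref{lem:simple-1-3}, or more directly by the doubling argument employed throughout Section~\ref{section:link2}) must cut off a $3$-ball disjoint from $t$. Chasing which side of this sphere lies on which side of $\delta_2$, the resulting $3$-ball is exactly the one whose boundary is $\Delta\cup\delta_{2,\Delta}$ capped by a disc in $D$, so $\Delta\cup\delta_{2,\Delta}$ cuts off a $3$-ball in $\threeball$ disjoint from $t$, contradicting the nontriviality hypothesis on $\Delta$. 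In the case $|\Delta'\cap t|=1$, Lemma~\ref{lem:indivisible-tangle} gives a $3$-ball bounded by $\Delta'$ and a subdisc of $D$ whose intersection with $t$ is a single unknotted arc; since that single arc is necessarily a subarc of $t_2$ contained entirely in $\delta_{2,\Delta}$, removing it yields, once again, a $3$-ball cut off by $\Delta\cup\delta_{2,\Delta}$ and disjoint from $t$, contradicting the third condition on $\Delta$. In the case $|\Delta'\cap t|=2$, both intersection points lie on $t_2$, so doubling $\Delta'$ across $D$ produces a $2$-sphere in $\threesphere$ that is disjoint from $K_0$, meets $O_1$ in at most two points (depending on whether $\beta$ separates $\partial t_1$ from $\partial t_3$ in $D\setminus t$, which can be arranged), and separates $O_2$ from $O_3$; this contradicts Lemma~\ref{lem:no-separating-two-sphere}.

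The main obstacle will be the bookkeeping in the first case: one must check that the $3$-ball produced by applying Lemma~\ref{lem:simple-1-3}-type reasoning to $\Delta'\cup D_0$ is the one lying on the $\delta_{2,\Delta}$-side of $\delta_2$, not on the opposite side, so that cutting along $\delta_{2,\Delta}$ really yields a $3$-ball bounded by $\Delta\cup\delta_{2,\Delta}$ disjoint from $t$. This requires using the specific configuration of $\delta_2$, $t_2$, and $D$ shown in Figure~\ref{fig:two-disks} to rule out the possibility that the remaining strands $t_1$ and $t_3$ penetrate that ball; once the geometric picture is set up, this is a routine verification, but it is the step where the precise placement of $\delta_2$ is essential.
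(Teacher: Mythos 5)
There is a genuine gap, and it stems from a misreading of the configuration in Figure~\ref{fig:two-disks}. The disc $\delta_2$ does not meet $t$ at the endpoints of $t_2$: those lie in $D\subset\partial\threeball$ and are avoided by $\alpha'$, whereas the actual intersection $t\cap\delta_2$ is a single \emph{transverse interior} point of the strand $t_1$ (the paper uses this explicitly: $t\cap(\delta_2\cup\delta_3)=t_1\cap(\delta_2\cup\delta_3)$ in Claim~\ref{c:disjoint-discs}, and ``the point $t_1\cap\delta_2$'' in its own proof of this lemma). Consequently the correct dichotomy is $|\Delta'\cap t|\in\{0,1\}$ according to whether the puncture point of $t_1$ lies in $\delta_{2,\Delta}$, not your three cases governed by $t_2$. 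Your case $|\Delta'\cap t|=0$ is essentially the paper's first case and is fine (cap with $\delta_{2,\Delta}$, invoke Lemma~\ref{lem:no-compressing-disk} and irreducibility of the handlebody $\threeball\setminus t$ to get a ball contradicting condition (3)). But your case $|\Delta'\cap t|=1$ collapses: the arc of $t$ inside the ball produced by Lemma~\ref{lem:indivisible-tangle} is a genuine interior subarc of $t_1$, not a subarc of $t_2$ sitting on $\delta_{2,\Delta}$, so it cannot be ``removed''; the ball cut off by $\Delta\cup\delta_{2,\Delta}$ simply is not disjoint from $t$, and neither condition (3) nor Lemma~\ref{lem:indivisible-tangle} yields any contradiction. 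The third case does not occur, and its proposed resolution via Lemma~\ref{lem:no-separating-two-sphere} rests on the same misidentification.

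The missing ingredient is precisely what makes the lemma nontrivial. In the case where $\delta_{2,\Delta}$ contains the point $t_1\cap\delta_2$ (equivalently $D_\Delta$ contains $t_1\cap D$), the paper derives a contradiction from the fundamental group rather than from a ball: the loop $\partial\Delta=\partial(\delta_{2,\Delta}\cup D_\Delta)$ is freely homotopic in $\threeball\setminus t$ to the boundary $\gamma$ of a regular neighbourhood of an arc of $t_1$ in $\delta_{2,\Delta}\cup D_\Delta$, and in the free group $\pi_1(\threeball\setminus(t_1\cup t_3))=\langle x_1,x_3\rangle$ this $\gamma$ represents the nontrivial conjugacy class of $x_1x_3x_1^{-1}x_3^{-1}$ --- contradicting the fact that $\partial\Delta$ bounds the disc $\Delta$ disjoint from $t$. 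This computation is where the specific way $t_1$ links the other strands enters, and no amount of cut-and-paste with the preceding lemmas substitutes for it; your proposal would need to be supplemented by this (or an equivalent) argument to close the gap.
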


\begin{proof}
Suppose to the contrary that there exists a disc $\Delta$ satisfying the
conditions.
Let $D_{\Delta}$ be the subdisc of $D$ bounded by $\alpha'\cup \beta$.
Since $\Delta$ is disjoint from $t$, the loop
$\partial \delta_{2,\Delta}$ is homotopic to the loop $\partial D_{\Delta}$ in
$\threeball\setminus t$. Thus $\delta_{2,\Delta}$ contains the point $t_1\cap\delta_2$
if and only if $D_{\Delta}$ contains the point $t_1\cap D$.

Suppose first that $D_{\Delta}$ does not contain the point $t_1\cap D$
and so $\delta_{2,\Delta}$ does not contain the point $t_1\cap\delta_2$.
Then $\Delta\cup \delta_{2,\Delta}$ is a disc properly embedded in $\threeball$
disjoint from $t$. Hence the disc $D_{\Delta}$ in $D$ bounded by
$\partial(\Delta\cup \delta_{2,\Delta})$ is disjoint from $t$ by
Lemma~\ref{lem:no-compressing-disk}. Since $(\threeball,t)$ is a $3$-strand
trivial tangle, $\threeball \setminus  t$ is homeomorphic to a genus $3$ handlebody
(with three annuli on the boundary removed), and so $\threeball \setminus  t$ is
irreducible. Hence the $2$-sphere $\Delta\cup  \delta_{2,\Delta}\cup
D_{\Delta}$
bounds a $3$-ball in $\threeball \setminus  t$. 
This contradicts the assumption that
$\Delta$ satisfies the third condition that $\Delta$ is nontrivial.

Suppose next that $D_{\Delta}$ contains the point $t_1\cap D$ and so
$\delta_{2,\Delta}$ contains the point $t_1\cap\delta_2$. Consider an arc in
the disc $\delta_{2,\Delta}\cup D_{\Delta}$ joining the two points
$t_1\cap(\delta_{2,\Delta}\cup D_{\Delta})$, and let $\gamma$ be the boundary
of its regular neighbourhood in $\delta_{2,\Delta}\cup D_{\Delta}$. 
Then
$\partial\Delta=\partial(\delta_{2,\Delta}\cup D_{\Delta})$ is homotopic to
$\gamma$ in $\threeball \setminus  t$. 
Note that $\gamma$ is as illustrated in Figure \ref{fig:freegroup}.
If we ignore the string $t_2$ and regard $\gamma$ as a loop in 
$\threeball\setminus (t_1\cup t_3)$,
then the free homotopy class of $\gamma$
up to orientation
corresponds to 
the conjugacy class of the free group $\pi_1(\threeball\setminus (t_1\cup t_3))$
represented by $x_1(x_3x_1^{-1}x_3^{-1})$,
where $\{x_1, x_3\}$ is the free basis illustrated in 
Figure~\ref{fig:freegroup}. 
This contradicts the fact that $\Delta$ is disjoint from $t$.
This completes the proof of Lemma \ref{lem:no-boudary-compressing-disk}.
\end{proof}

\begin{figure}[h]
\begin{center}
 {
\includegraphics[height=5cm]{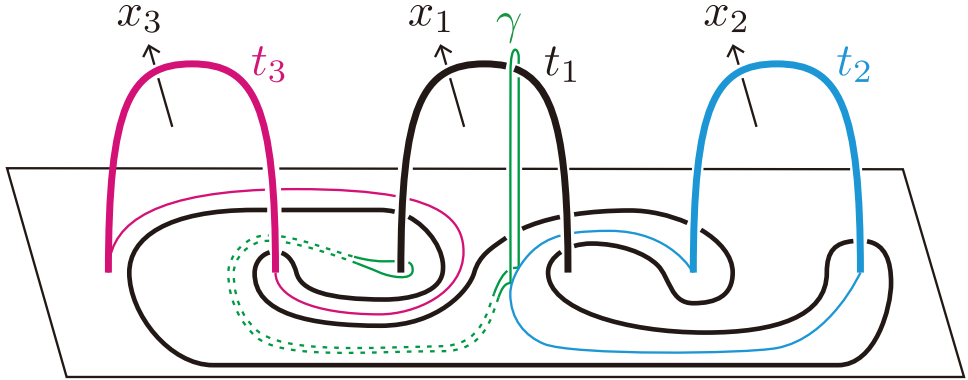}
 }
\end{center}
\caption{
The loop $\gamma$ in green and the free basis of the fundamental group of the
handlebody.}
\label{fig:freegroup}
\end{figure}

\begin{remark}
\label{rem:D-and-D'}
The restriction of the antipodal map $\gamma_2$ to $\threeball$ gives a
diffeomorphism of $(\threeball,K_0\cup t)$ interchanging $D$ with $D'$ (and
$\delta_2$ with $\delta_3$). Hence, we may replace $D$ with $D'$ (and
$\delta_2$ with $\delta_3$) in Lemmas~\ref{lem:no-compressing-disk},
\ref{lem:indivisible-tangle} and \ref{lem:no-boudary-compressing-disk}.
\end{remark}

\begin{Lemma}
\label{lem:no-essentail-annulus}
There does not exist an essential annulus, $A$, in $\threeball\setminus t$ whose
boundary is disjoint from $K_0=\partial D=\partial D'$. To be precise, any
incompressible annulus $A$ properly embedded in $\threeball\setminus t$ such that
$\partial A\cap K_0=\emptyset$ is parallel in $\threeball\setminus t$ to
(i) an annulus in $\partial\threeball\setminus (K_0\cup t)$,
(ii) the frontier of a regular neighbourhood of $K_0$ or
(iii) the frontier of a regular neighbourhood of a component $t_i$ of $t$.
\end{Lemma}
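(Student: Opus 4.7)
The plan is to mimic the proof of Lemma~\ref{lem:simple-1-4}. Since $B^3$ is simply connected, the incompressible annulus $A$ is compressible in $B^3$ and hence bounds a cylinder $W=B^2\times I$ with $A=\partial B^2\times I$ and the two end discs $E_0,E_1\subset\partial B^3\setminus K_0=\mathrm{int}\, D\sqcup\mathrm{int}\, D'$. Recalling that $t_2$ has both endpoints in $\mathrm{int}\, D$, $t_3$ has both endpoints in $\mathrm{int}\, D'$, and $t_1$ has one endpoint in each, every $t_i$ must lie entirely inside $W$ or entirely in its complement. The proof then splits into two cases according to the distribution of $E_0,E_1$ between $D$ and $D'$; Remark~\ref{rem:D-and-D'} reduces the ``both in $\mathrm{int}\, D'$'' case to the ``both in $\mathrm{int}\, D$'' case.

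In the first case (one end in $\mathrm{int}\, D$ and the other in $\mathrm{int}\, D'$), I would enumerate which of the strands $t_i$ are contained in $W$. If none, then $W$ is disjoint from $t$ and $A$ is parallel to an annulus in $\partial B^3\setminus(K_0\cup t)$. If only $t_i$ is contained in $W$, then, because $t_i$ is unknotted and has endpoints on the two end discs of $W$, it is isotopic to the core of $W$, so $A$ is parallel to $\partial N(t_i)$. Any other distribution would force $\partial A$ to bound discs in $D\setminus t$ or $D'\setminus t$ enclosing forbidden combinations of punctures, contradicting Lemma~\ref{lem:no-compressing-disk}, together with Lemma~\ref{lem:no-separating-two-sphere} applied to the double of suitable subdiscs of $W\cap\partial B^3$ across $K_0$.

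In the second case (both ends in $\mathrm{int}\, D$), the discs $E_0$ and $E_1$ are concentric in $D$; let $E_0$ be the inner one. By Lemma~\ref{lem:no-compressing-disk}, each component of $\partial A$ is essential in $D\setminus t$, and so it encloses a non-trivial subset of the three punctures $D\cap t=\partial t_2\cup\{t_1\cap D\}$. A case-by-case analysis using Lemma~\ref{lem:indivisible-tangle} (to identify single-puncture enclosures with trivial $1$-string tangles) and Lemma~\ref{lem:no-boudary-compressing-disk} (to rule out the more intricate enclosures by means of a boundary compression along $\delta_2$ or $\delta_3$) shows that $A$ is forced to be parallel to $\partial N(K_0)$, to $\partial N(t_i)$ for some $t_i$ contained in $W$, or to an annulus in $\partial B^3\setminus(K_0\cup t)$.

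The main obstacle will be the second case, specifically ruling out the configuration in which $\partial A$ encloses exactly two of the three punctures of $D\setminus t$. Here the natural boundary compression of $A$ along $\delta_2$ (or $\delta_3$) produces exactly the kind of disc forbidden by Lemma~\ref{lem:no-boudary-compressing-disk}, but one must verify that the required ``nontriviality'' of this compression indeed follows from the assumption that $A$ is essential and not parallel into $\partial B^3$, which is where the detailed geometry of the tangle $(B^3,t)$, and in particular the free-group computation at the end of the proof of Lemma~\ref{lem:no-boudary-compressing-disk}, enters.
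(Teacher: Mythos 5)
Your overall skeleton matches the paper's: $A$ bounds a cylinder $B^2\times I$ in $\threeball$ because it compresses in the ball, and one splits into cases according to whether the two end discs lie in $\interior D$ and $\interior D'$ respectively, or both in the same disc (disjoint or concentric). But the proposal has a genuine gap exactly where the proof is hardest, namely the sub-case of the mixed case in which each end disc contains exactly two points of $\partial t$, so that $t_2\cup t_3$ lies inside the cylinder and $t_1$ outside. This configuration is \emph{not} reached by Lemma~\ref{lem:no-compressing-disk} or Lemma~\ref{lem:no-separating-two-sphere}: the curve $\partial A\cap D$ encloses the two punctures $\partial t_2$, so it is essential in $D\setminus t$ and those lemmas say nothing about it, and every sphere you can manufacture by doubling or capping off pieces of $\partial(B^2\times I)$ meets $O_2$ or $O_3$ in two or four points, hence never satisfies the hypotheses of Lemma~\ref{lem:no-separating-two-sphere} (which requires the sphere to be disjoint from $O_2\cup O_3$). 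The paper disposes of this configuration by first isotoping $A$ off the discs $\delta_2\cup\delta_3$ (Claim~\ref{c:disjoint-discs}, which is where Lemma~\ref{lem:no-boudary-compressing-disk} actually gets used) and then computing in the free group $\pi_1(\threeball\setminus(t_1\cup t_3))$ that the two components of $\partial A$ represent the trivial class and the nontrivial class $(x_1x_3x_1^{-1}x_3^{-1})^{\pm1}$ respectively, contradicting the fact that they are freely homotopic in $\threeball\setminus t$. Nothing in your outline substitutes for this computation, and you locate the boundary-compression lemma in the wrong case.

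Second, in the case where both ends of the cylinder lie in $\interior D$, you assert concentricity of the end discs without the puncture-counting argument that eliminates the disjoint configuration (each end disc must capture at least two of the three points of $t\cap D$, which two disjoint discs in $D$ cannot do), and, more importantly, you do not address the actual crux of the concentric case: showing that the cylinder, which now contains the entire tangle, is \emph{unknotted} in $\threeball$, so that $A$ is parallel through the complementary solid torus to an annulus in $\partial\threeball$ and hence not essential. When the $D$-endpoint of $t_1$ lies in the outer end disc this is not immediate; the paper builds an auxiliary unknotted core arc by concatenating subarcs of $t_1$ and $t_2$ with the arc $\eta\subset\delta_2$ of Figure~\ref{fig:two-disks}. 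In particular the configuration in which $\partial A$ encloses exactly two punctures is not ``ruled out'' by a boundary compression as you suggest; it survives and must be shown to produce only a boundary-parallel annulus.
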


\begin{proof}
Suppose to the contrary that there is an essential annulus, $A$, in
$\threeball\setminus t$ such that $\partial A\cap K_0=\emptyset$. Since $A$ is
compressible in $\threeball$, it bounds a cylinder $B^2\times I$ with
$I=[0,1]$, where $A=\partial B^2\times I$ and $B^2\times \partial I\subset 
\partial\threeball$.

\medskip

{\bf Case 1.} One component of $\partial A$ is contained in $\interior D$
and the other component is contained in $\interior D'$. Then we may assume
$B^2\times 0\subset \interior D$ and $B^2\times 1\subset D'$. 
Since $A$ is
incompressible, $B^2\times 1$ must contain at least one point of $\partial t$.
If $B^2\times 1$ contains exactly one point of $\partial t$, then pushing
$A\cup (B^2\times 1)$ into the interior of $\threeball$, we obtain a properly
embedded disc $\Delta$ in $\threeball$ which satisfies the first two conditions
of Lemma \ref{lem:indivisible-tangle}. The lemma implies that $\Delta$ cuts off
a $1$-string trivial tangle and so $A$ is parallel to the frontier of the
boundary of a regular neighbourhood of a component of $t$, a contradiction.
So, $B^2\times 1$ contains at least two points of $\partial t$. By
Remark~\ref{rem:D-and-D'}, the same argument also implies that
$B^2\times 0$, as well, contains at least two points of $\partial t$. Suppose
that $B^2\times 1$ contains the three points $D'\cap \partial t
=D'\cap \partial(t_1\cup t_3)$. Then the cylinder $B^2\times I$ contains
$t_1\cup t_3$, because $A$ is disjoint from $t$. Since $B^2\times 0$ contains
at least two points of $\partial t$, it must contain a point of $\partial t_2$.
Hence we see that the cylinder $B^2\times I$ contains the whole
$t=t_1\cup t_2\cup t_3$. Since the arc $t_1$, which joins a point of
$B^2\times 0$ and a point of $B^2\times 1$ in the cylinder $B^2\times I$, is
unknotted, the cylinder is unknotted in $\threeball$ and hence the closure of
its complement is a solid torus. This implies that $A$ is parallel to the
frontier of a regular neighbourhood of $K_0$ in $\threeball$, a contradiction.
Hence both $B^2\times 0$ and $B^2\times 1$ contain exactly two points of
$\partial t$. Thus we see that $t_2\cup t_3$ is contained in the cylinder
$B^2\times I$ and that $t_1$ is contained in the complement of the cylinder.

\begin{Claim}
\label{c:disjoint-discs}
We may assume that $A$ is disjoint from the discs $\delta_2$ and $\delta_3$.
\end{Claim}

\begin{proof}
We may assume that the intersection of $A$ with $\delta_2\cup\delta_3$ is
transversal and the number of components of 
$A\cap(\delta_2\cup\delta_3)$ is 
minimised.
Suppose that $A\cap(\delta_2\cup\delta_3)$ contains loop components.
Pick a loop component, $C$, which is innermost in $\delta_2\cup\delta_3$,
and let $\delta_C$ be the subdisc of $\delta_2\cup\delta_3$ bounded by $C$.
Suppose $\delta_C$ contains a point of
$t\cap(\delta_2\cup\delta_3)=t_1\cap(\delta_2\cup\delta_3)$.
Then $C$ forms a meridian of $t_1$ and hence it is not null-homotopic in
$\threeball\setminus t$. So $C$ cannot be null-homotopic in the annulus
$A\subset \threeball\setminus t$ and so $C$ forms a core loop $A$. However, since $t_1$
is contained in the complement of the cylinder $B^2\times I$, the disc in the
cylinder bounded by $C$ is disjoint from $t_1$, and therefore $C$ is
null-homotopic in $\threeball\setminus t_1$, a contradiction. Hence $\delta_C$ is
disjoint from $t$. Since $A$ is incompressible, $C$ bounds a disc, $A_C$, in
$A$. The union $A_C\cup \delta_C$ is a a $2$-sphere in the irreducible manifold
$\threeball\setminus t$. Thus we can remove the intersection $C$ by using the $3$-ball
bounded by $A_C\cup \delta_C$. This contradicts the minimality of the
intersection $A\cap (\delta_2\cup\delta_3)$. Thus we have shown that
$A\cap (\delta_2\cup\delta_3)$ has no loop components, and so it is either
empty or consists of arcs whose boundaries are contained in
$\partial\threeball\cap (\delta_2\cup\delta_3)$.

Suppose that $A\cap(\delta_2\cup\delta_3)$ is nonempty,
and pick an arc
component, $C$, which is \lq\lq outermost'' in $\delta_2\cup\delta_3$. Let
$\delta_C$ be the ``outermost'' subdisc of $\delta_2\cup\delta_3$ bounded by
$C$ and the subarc of $\partial\threeball\cap (\delta_2\cup\delta_3)$ bounded
by $\partial C$. Since $\partial C$ lies  either in $D$ or $D'$, the arc $C$
must be inessential in $A$ and it cuts off a disc, $A_C$, from $A$. If
$\partial C\subset D$, then the union $\Delta:=A_C\cup \delta_C$ satisfies the
first two conditions of Lemma \ref{lem:no-boudary-compressing-disk}. Hence the
lemma implies that $\Delta$ is \lq\lq trivial'' and so we can remove the
intersection $C$. This contradicts the minimality of the intersection $A\cap 
(\delta_2\cup\delta_3)$. By Remark \ref{rem:D-and-D'}, the same argument works
when $\partial C\subset D'$. Thus we obtain the claim.
\end{proof}

By the above claim, $\partial B^2\times 0$ is isotopic in $\threeball\setminus t$ to
the boundary of a regular neighbourhood of $\delta_2\cap D$ in $D\setminus t_1$, and
$\partial B^2\times 1$ is isotopic in $\threeball\setminus t$ to the boundary of a
regular neighbourhood of $\delta_3\cap D'$ in $D'\setminus t_1$. Hence, the loop
$\partial B^2\times 0$ is null-homotopic in $\threeball\setminus (t_1\cup t_3)$,
and the loop $\partial B^2\times 1$ represents the conjugacy class of the
nontrivial element $((x_1x_3x_1^{-1})x_3^{-1})^{\pm 1}$ in
$\pi_1(\threeball\setminus (t_1\cup t_3))$
(see Figure \ref{fig:freegroup}).
This contradicts the fact that these loops
are freely homotopic in  $\threeball\setminus t$ and hence in
$\threeball\setminus (t_1\cup t_3)$. So, we have shown that Case 1 cannot happen.

\medskip
{\bf Case 2.} Both components of $\partial A$ are contained in $\interior D$,
and they bound mutually disjoint discs in $\interior D$. Then we have
$B^2\times \partial I\subset \interior D$. By the argument in the first step in
Case 1 by using the incompressibility of $A$ and
Lemma~\ref{lem:indivisible-tangle}, we can see that both $B^2\times 0$ and
$B^2\times 1$ contain at least two points of $\partial t$. Then $D$ must
contain at least four points of $\partial t$, a contradiction. By
Remark~\ref{rem:D-and-D'}, the same argument works when both components of
$\partial A$ are contained in $\interior D'$, and they bound mutually disjoint
discs in $\interior D'$.

\medskip
{\bf Case 3.} Both components of $\partial A$ are contained in $\interior D$,
and they are concentric in $\interior D$. Then we may assume that $B^2\times 0$
is contained in $\interior D$ and that $B^2\times 1$ contains $D'$ in its
interior. Reasoning as in Case 1, we see that $B^2\times 0$ contains at least
two endpoints of $t$, moreover $B^2\times 1$ must contain at last three of them
since it contains $D'$. As a consequence, the cylinder contains the entire
tangle $t$. 

\begin{Claim}
\label{c:bounding-solidtorus}
The cylinder $B^2\times I$ in unknotted in $\threeball$, and so
the closure of the complement of the cylinder $B^2\times I$
in $\threeball$ is a solid torus, $V$.
\end{Claim}

\begin{proof}
Assume first that the endpoints of $t_1$ in $D$ 
belong to $B^2\times 0$.
Then, since $t_1$ joins $\interior D$ with $\interior D'$, $t_1$ joins
$B^2\times 0$ with $B^2\times 1$ in the cylinder $B^2\times I$. 
So the cylinder
$B^2\times I$ is unknotted in $\threeball$ and hence the closure of its
complement in $\threeball$ is a solid torus $V$.

Suppose that the above assumption does not hold.
Then both endpoints of $t_1$ must belong to
$B^2\times 1$, while $B^2\times 0$ contains precisely the two endpoints of
$t_2$. 
Consider now the disc $\delta_2$ introduced before
Lemma~\ref{lem:no-boudary-compressing-disk}. Let $\eta$ be a small arc
contained in $\delta_2$ and joining the point $p$ of $t_1$ inside $\delta_2$ to
a point $q$ in the interior of $t_2$ (see Figure~\ref{fig:two-disks}). Let
$t_2'$ any of the two subarcs of $t_2$ going from one of its endpoints to $q$
and $t'_1$ the subarc of $t_1$ going from $p$ to the endpoint of $t_1$
contained in $D'$. We claim 
that the arc obtained by concatenating $t'_2$,
$\eta$, and $t'_1$ is trivial and contained in the cylinder $B^2\times I$.
Indeed, this arc cobounds a disc with an arc in $\partial \threeball$: such
disc is obtained by surgery along the two trivialising discs for $t_1$ and
$t_2$. To see that the arc is contained in the cylinder, it is enough to prove
that $\eta$ is contained inside the cylinder. This follows from the fact, that
can be proved as in Claim~\ref{c:disjoint-discs}, that $\delta_2$ does not meet
$A$. Using this trivial arc we see once more as in the previous case that the
cylinder is unknotted and its exterior is a solid torus $V$.
\end{proof}

By Claim \ref{c:bounding-solidtorus},
$(\threesphere\setminus \interior\threeball)\cup B^2\times I$ is a solid torus,
and $\partial B^2\times 0$ is its meridian.
Thus $\partial B^2\times 0$ is a longitude of the solid torus $V$.
So $A$ is parallel to the annulus $A':=V\cap \partial \threeball$ through $V$.
Since $A$ is essential in $\threeball\setminus t$, $V$ should contain a component of
$t$. This is, however, impossible for, as observed at the beginning, the
entire tangle is contained in the cylinder.

Now the proof of Lemma \ref{lem:no-essentail-annulus} is complete.
\end{proof}

Observe that the exterior of $t$ in $\threeball$ is a genus $3$ handlebody
and so $\threeball\setminus t$ is atoroidal
(i.e., does not contain an essential torus).
By using this fact and 
Lemmas \ref{lem:no-compressing-disk}-\ref{lem:no-essentail-annulus},
we can see that the link $\threesphere \setminus  L_3$ is atoroidal.
Since $L_3$ has $4$ components, this implies that 
$\threesphere \setminus  L_3$ cannot be a Seifert fibered space.
Hence $L_3$ is hyperbolic by Thurston's uniformization theorem for Haken manifolds.


\section{The link $L_6$ is hyperbolic}
\label{section:link6}


Consider a fundamental domain for the action of the dihedral group of order $6$
acting on $(\threesphere, L_6)$ and generated by the reflections in three
vertical planes, as illustrated in Figure~\ref{fig:link3}. The domain is shown
in Figure~\ref{fig:domain}.

\begin{figure}[h]
\begin{center}
 {
  \includegraphics[height=4cm]{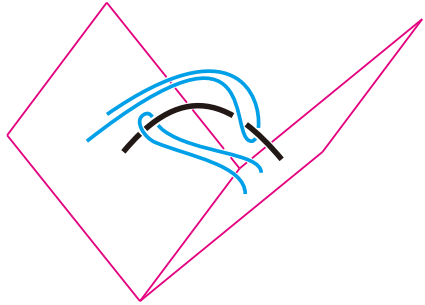}
 }
\end{center}
\caption{A local picture of the fundamental domain for the dihedral action on
$(\threesphere, L_6)$.}
\label{fig:domain}
\end{figure}

This domain can be seen as the intersection of two balls bounded by the
fixed-point sets of two reflections, that is two $2$-spheres. The two
$2$-spheres meet along a circle (a portion of the circle is the pink line of
intersection of the two planes in Figure~\ref{fig:domain}). 
It is not difficult to see that the
tangle in this fundamental domain, together with the circle of
intersection of the reflecting spheres on the boundary coincides with the
tangle and the equator $K_0$ on the right-hand side of Figure~\ref{fig:positive}.
Doubling this tangle we then obtain the sublink $\OO_3$ of the hyperbolic link
$L_3$, while the circle can be identified with $K_0$.

Since $L_3$ is hyperbolic, it is also $2\pi/3$-hyperbolic, for it is not the
figure-eight knot. Indeed, it follows from Thurston's orbifold theorem (see
\cite{BoP, CHK}) that a hyperbolic link that is not $2\pi/3$-hyperbolic
must be either Euclidean or spherical. Dunbar's list of geometric orbifolds
with underlying space the $3$-sphere \cite{D} shows that the only link with
this property is the figure-eight knot.

Consider now the hyperbolic orbifold $(\threesphere,L_3(2\pi/3))$: it is
topologically $\threesphere$ with singular set $L_3$ of cone angle $2\pi/3$
along every component. The reflection $\gamma_1$ of $L_3$ induces a hyperbolic
reflection of $(\threesphere,L_3(2\pi/3))$ along a totally geodesic surface.
This implies that the fundamental domain for the dihedral action admits a
hyperbolic structure with cone angle $2\pi/3$ along the components of the
tangle and cone angle $\pi/3$ along the circle cobounding the two
totally-geodesic three-punctured discs of the silvered boundary.

This cone hyperbolic structure lifts now to a cone hyperbolic structure of
$L_6$. As a consequence, the link $L_6$ is $2\pi/3$-hyperbolic and hence
hyperbolic.

\begin{remark}
Since the link $L_3$ is $2\pi/n$-hyperbolic for every $n\ge 3$, this
construction provides a whole family of hyperbolic links obtained by gluing
together $2n$ copies of the tangle in the fundamental domain, via reflections
in half of their boundaries. The resulting link $L_{2n}$ will admit a dihedral
symmetry of order $2n$. If $n$ is moreover odd, the link $L_{2n}$ will also
have the required $\G$-action to provide an admissible root.
\end{remark}

\paragraph{\bf Acknowledgements}
The main bulk of this research
was completed during a visit of 
the first named author
to Hiroshima
University in October 2017 funded by JSPS through FY2017 Invitational
Fellowship for Research in Japan (short term).
She
is also thankful to the Hiroshima University Math Department for
hospitality during her stay.
The authors started discussing the contents of this work during a
workshop held in Kyoto in 2011: they are grateful to the organisers, 
Teruaki Kitano and Takayuki Morifuji,
for inviting them.
They also thank Yasutaka Nakanishi for his helpful suggestions 
concerning the arguments in Sections~\ref{section:link2} and ~\ref{section:link3}.

\end{document}